\newtheorem{theorem}{Theorem}[section]
\newtheorem{proposition}{Proposition}[section]
\newtheorem{lemma}{Lemma}[section]
\newtheorem{corollary}{Corollary}[section]
\theoremstyle{definition}
\newtheorem{remark}{Remark}[section]
\newtheorem{definition}{Definition}
\numberwithin{equation}{section}
\newcommand{\beq}{\begin{equation}}
\newcommand{\bea}[1]{\begin{array}{#1} }
\newcommand{\eeq}{ \end{equation}}
\newcommand{\ea}{ \end{array}}
\newcommand{\ep}{\epsilon}
\newcommand{\ga}{\gamma}
\newcommand{\ds}{\displaystyle}
\newcommand{\ran}{\rangle}
\newcommand{\lan}{\langle}
\newcommand{\Ga}{\Gamma}
\newcommand{\La}{\Lambda}
\newcommand{\ar}{\partial}
\newcommand{\si}{\sigma}
\newcommand{\om}{\omega}
\newcommand{\Om}{\Omega}
\newcommand{\sem}{\setminus}
\newcommand{\De}{\Delta}
\def \d {{\delta}}
\def\Xint#1{\mathchoice
{\XXint\displaystyle\textstyle{#1}}%
{\XXint\textstyle\scriptstyle{#1}}%
{\XXint\scriptstyle\scriptscriptstyle{#1}}%
{\XXint\scriptscriptstyle%
\scriptscriptstyle{#1}}%
\!\int}
\def\XXint#1#2#3{{\setbox0=\hbox{$#1{#2#3}{%
\int}$ }
\vcenter{\hbox{$#2#3$ }}\kern-.6\wd0}}
\def\barint{\,\Xint -} % \, corrects the \! used in the definition
\def\bariint{\barint_{} \kern-.4em \barint}
\def\bariiint{\bariint_{} \kern-.4em \barint}
\renewcommand{\iint}{\int_{}\kern-.34em \int} %\, minor space between the integrals
\renewcommand{\iiint}{\iint_{}\kern-.34em \int} %\, minor space between the integrals
\def\mean#1{\mathchoice%
          {\mathop{\kern 0.2em\vrule width 0.6em height 0.69678ex depth -0.58065ex
                  \kern -0.8em \intop}\nolimits_{\kern -0.4em#1}}%
          {\mathop{\kern 0.1em\vrule width 0.5em height 0.69678ex depth -0.60387ex
                  \kern -0.6em \intop}\nolimits_{#1}}%
          {\mathop{\kern 0.1em\vrule width 0.5em height 0.69678ex
              depth -0.60387ex
                  \kern -0.6em \intop}\nolimits_{#1}}%
          {\mathop{\kern 0.1em\vrule width 0.5em height 0.69678ex depth -0.60387ex
                  \kern -0.6em \intop}\nolimits_{#1}}}
\def\vintslides_#1{\mathchoice%
          {\mathop{\kern 0.1em\vrule width 0.5em height 0.697ex depth -0.581ex
                  \kern -0.6em \intop}\nolimits_{\kern -0.4em#1}}%
          {\mathop{\kern 0.1em\vrule width 0.3em height 0.697ex depth -0.604ex
                  \kern -0.4em \intop}\nolimits_{#1}}%
          {\mathop{\kern 0.1em\vrule width 0.3em height 0.697ex depth -0.604ex
                  \kern -0.4em \intop}\nolimits_{#1}}%
          {\mathop{\kern 0.1em\vrule width 0.3em height 0.697ex depth -0.604ex
                  \kern -0.4em \intop}\nolimits_{#1}}}
\newcommand{\aveint}[2]{\mathchoice%
          {\mathop{\kern 0.2em\vrule width 0.6em height 0.69678ex depth -0.58065ex
                  \kern -0.8em \intop}\nolimits_{\kern -0.45em#1}^{#2}}%
          {\mathop{\kern 0.1em\vrule width 0.5em height 0.69678ex depth -0.60387ex
                  \kern -0.6em \intop}\nolimits_{#1}^{#2}}%
          {\mathop{\kern 0.1em\vrule width 0.5em height 0.69678ex depth -0.60387ex
                  \kern -0.6em \intop}\nolimits_{#1}^{#2}}%
          {\mathop{\kern 0.1em\vrule width 0.5em height 0.69678ex depth -0.60387ex
                  \kern -0.6em \intop}\nolimits_{#1}^{#2}}}
\def\eqn#1$$#2$${\begin{equation}\label#1#2\end{equation}}
\def\charfn_#1{{\raise1.2pt\hbox{$\chi
_{\kern-1pt\lower3pt\hbox{{$\scriptstyle#1$}}}$}}}
\def\diam{\operatorname{diam}}
\def\qq1{q_*}
\def\q2{q_{**}}
\def\dist{\operatorname{dist}}
\def\ep{\varepsilon}
\newdimen\vintbar
\def\vint{-\kern-\vintbar\int}
\def\B{\mathcal B}
\def\G{\mathcal G}
\def\H{\mathcal H}
\def\P{\mathcal P}
\def\mc{\mathcal M}
\def\nt{N^*}
\def\S{\mathcal S}
\def\U{\mathcal U}
\def\0{\boldsymbol 0}
\newcommand{\ree}{\mathbb{R}^{n+1}}
\newcommand{\Rn}{\mathbb{R}^n}
\newcommand{\R}{\mathbb R}
\newcommand{\pc}{\mathcal{P}}
\newcommand{\eps}{\epsilon}
\newcommand{\Hpn}{\mathcal{H}^{n+1}_{\text{p}}}
\newtoks\by
\newtoks\paper
\newtoks\book
\newtoks\jour
\newtoks\yr
\newtoks\pages
\newtoks\vol
\newtoks\publ
\def\name[#1, #2]{#1 #2}
\def\ota{{\hbox{\bf ???}}}
\def\cLear{\by=\ota\paper=\ota\book=\ota\jour=\ota\yr=\ota
\pages=\ota\vol=\ota\publ=\ota}
\def\endpaper{\the\by, \textit{\the\paper},
{\the\jour} \textbf{\the\vol} (\the\yr), \the\pages.\cLear}
\def\endbook{\the\by, \textit{\the\book},
\the\publ, \the\yr.\cLear}
\def\endpap{\the\by, \textit{\the\paper}, \the\jour.\cLear}
\def\endproc{\the\by, \textit{\the\paper}, \the\book, \the\publ,
\the\yr, \the\pages.\cLear}
\newcommand{\qc}{\mathcal{Q}}
\newcommand{\pom}{\partial\Omega}
\newcommand{\Xb}{{\bf X}}
\newcommand{\Yb}{{\bf Y}}
\newcommand{\Zb}{{\bf Z}}
\renewcommand{\d}{\, \mathrm{d}}
\begin{document}
\title[On big pieces approximations of parabolic hypersurfaces]{On big pieces approximations of\\ parabolic hypersurfaces}

\author{S. Bortz}
\address{Department of Mathematics
\\
University of Alabama
\\
Tuscaloosa, AL, 35487, USA}
\email{sbortz@ua.edu}
\author{J. Hoffman}
\address{Department of Mathematics
\\
University of Missouri
\\
Columbia, MO 65211, USA}
\email{jlh82b@mail.missouri.edu}
\author{S. Hofmann}
\address{
Department of Mathematics
\\
University of Missouri
\\
Columbia, MO 65211, USA}
\email{hofmanns@missouri.edu}
\author{J.L. Luna-Garcia}
\address{Department of Mathematics
\\
University of Missouri
\\
Columbia, MO 65211, USA}
\email{jlwwc@mail.missouri.edu}
\author{K. Nystr\"om}
\address{Department of Mathematics, Uppsala University, S-751 06 Uppsala, Sweden}
\email{kaj.nystrom@math.uu.se}

\thanks{The authors J. H., S. H., and J.L. L-G.  were partially supported by NSF grants
DMS-1664047 and DMS-2000048. K.N was partially supported by grant  2017-03805 from the Swedish research council (VR)
\\
\indent 2010  Mathematics Subject Classification: Primary 28A75, 42B37; 35K05, 35K10, 42B25.
\\
\indent Keywords and phrases: parabolic Lipschitz graph, parabolic uniform rectifiability, big pieces, parabolic measure, caloric measure.}

\date{\today}

\maketitle

\begin{abstract}
\noindent Let $\Sigma$ be a closed subset of $\R^ {n+1}$ which is parabolic Ahlfors-David regular and assume that  $\Sigma$ satisfies a 2-sided corkscrew condition. Assume, in addition, that  $\Sigma$ is either time-forwards Ahlfors-David regular, time-backwards Ahlfors-David regular, or parabolic uniform rectifiable. We then first prove that
 $\Sigma$  satisfies a {\it weak  synchronized two cube condition}. Based on this we are able to revisit the argument in \cite{NS} and prove that $\Sigma$ contains
{\it uniform big pieces of Lip(1,1/2) graphs}.  When  $\Sigma$ is parabolic uniformly rectifiable the construction can be refined and in this case we prove that $\Sigma$  contains
{\it uniform big pieces of regular parabolic Lip(1,1/2)  graphs}. Similar results hold if  $\Omega\subset\mathbb R^{n+1}$ is a connected component of $\mathbb R^{n+1}\setminus\Sigma$ and in this context we also give a parabolic counterpart of the main result in \cite{AHMNT} by proving
that if $\Om$ is a one-sided parabolic chord arc domain, and if  $\Sigma$ is parabolic uniformly rectifiable, then $\Om$ is in fact a parabolic chord arc domain. Our results give a flexible parabolic version of the classical (elliptic) result
of G. David and D. Jerison concerning the existence of uniform big pieces of Lipschitz graphs for sets satisfying a two disc condition.
\medskip

\end{abstract}

%\tableofcontents

    \setcounter{equation}{0} \setcounter{theorem}{0}
    \section{Introduction}
    An important result due to G. David and D. Jerison \cite{DJ} states that if
    $\Sigma\subset\mathbb R^{n+1}$ is a closed set which is Ahlfors-David
    regular with respect to the surface measure $\sigma =\H^n\lfloor \Sigma$,
(i.e., the restriction of $n$-dimensional Hausdorff measure to $\Sigma$),
and if $\Sigma$ satisfies what they call a two disc condition, then $\Sigma$ contains uniform big pieces of Lipschitz graphs, see \cite{DJ}. This result and its ramifications have had deep impact on the theory of elliptic boundary value problems and  on the analysis of and on  uniformly rectifiable sets. Indeed, if $\Omega$ is one component of $\mathbb R^n\setminus\Sigma$, and if, in addition, $\Omega$ is an NTA-domain in the sense of \cite{JK}, then the result of G. David and D. Jerison implies that the harmonic measure on $\partial\Omega$ belongs 
to the Muckenhoupt class $A_\infty$ defined with
respect to $\sigma$;
equivalently, that the Dirichlet problem for Laplace's equation is solvable in such domains,
    with $L^p$ boundary data.
Furthermore, the results of \cite{DJ}, combined with the monumental works of G. David and S. Semmes \cite{DS}, \cite{DS1}, have led to additional characterizations of uniform rectifiability:  see, e.g. \cite{HMM}, \cite{GMT}.

    In this paper we are interested in parabolic counterparts of the result of  G. David and D. Jerison. In general the theory of parabolic boundary value problems, and the analysis of and on parabolic uniformly rectifiable sets, is less developed compared to
the elliptic counterparts and there are essentially only two strains of main results  in the field: the results due to Hofmann, Lewis, Murray, Silver, see \cite{H1}, \cite{HL}, \cite{HL1}, \cite{LM}, \cite{LS} and the results due to Hofmann, Lewis, Nystr{\"o}m, see \cite{HLN1}, \cite{HLN2}.

To indicate the scope of the present paper, we %We would like to
give rough statements of three theorems to be proved;  more precise statements, as well as definitions of our terminology,
will be given in the sequel.

{\setlength{\leftmargini}{75pt}	
\begin{enumerate}
	\item[\underline{Theorem 1}:]  {\it If $\Sigma$ is parabolic ADR and satisfies a ``weak time synchronized two cube condition", then $\Sigma$ contains big pieces of Lip-$(1,1/2)$ graphs.}
\end{enumerate}
}
This ``weak time-synchronized two cube condition" is automatically satisfied in the presence of
two sided corkscrews and parabolic uniform rectifiability, see Theorem \ref{urtscorkscrews}.  In fact, when $\Sigma$ is parabolic uniformly rectifiable, we can transfer regularity from
the set $\Sigma$ to the approximating graph, which gives the additional subtle $t$-regularity required for boundedness of parabolic singular integrals and for parabolic potential theory.
{\setlength{\leftmargini}{75pt}	
\begin{enumerate}
	\item[\underline{Theorem 2}:] {\it If $\Sigma$ is parabolic uniformly rectifiable and satisfies the two-sided corkscrew condition, then $\Sigma$ contains big pieces of {\tt regular} Lip-$(1,1/2)$ graphs.  If {\tt in addition},
	$\Sigma$
	is time-symmetric ADR, and $\Sigma=\partial\Omega$
	is the boundary of an open set $\Omega\subset \ree$ satisfying an {\tt interior} corkscrew condition, then
	$\Sigma$ satisfies a uniform {\tt interior} big pieces of {\it regular} Lip-$(1,1/2)$ graphs condition.
	}
	\smallskip
	\item[\underline{Corollary 1}:] {\it Let $\Omega\subset \ree$ be an open set
	satisfying an {\tt interior} corkscrew condition.  If $\Sigma=\partial\Omega$
	is parabolic uniformly rectifiable, time-symmetric ADR,
	and satisfies the two-sided
	corkscrew condition, then caloric measure $\omega$ is absolutely continuous with respect
	to ``surface measure" $\sigma$ on $\Sigma$, the parabolic ``Poisson kernel"
	$d\omega/d\sigma$
	verifies a uniform scale invariant
	weak Reverse H\"older estimate, and the $L^p$ (initial)-Dirichlet problem for the heat equation is solvable
	in $\Omega$, for some $p<\infty$.}
	\smallskip
	\item[\underline{Theorem 3}:] {\it If $\Omega$ is a one-sided parabolic chord arc domain, whose boundary is parabolic uniformly rectifiable, then $\Omega$ is a (two-sided) chord arc domain.  Moreover, the caloric measure of $\Omega$ satisfies a
	(local) $A_\infty$ condition.}
\end{enumerate}
}

A few comments are in order concerning Corollary 1, and Theorem 3.
By the main result of \cite{GH} (and the maximum principle), in the setting of  Theorem 3, and of
the second part of Theorem 2,
we immediately deduce that caloric measure satisfies a local, scale-invariant weak-$A_\infty$ condition
with respect to the natural parabolic analogue of surface measure on $\Sigma$. In the setting of Theorem 3,
caloric measure is doubling (by a fairly routine extension of the results of \cite{FGS} essentially following \cite{HLN2}), and so in that case
the weak-$A_\infty$ condition immediately improves to (strong) $A_\infty$. Furthermore
(again see \cite{GH}),
the (weak) $A_\infty$ condition is equivalent to $L^p$ solvability of the Dirichlet problem, for some $p<\infty$.
Prior to this result, $L^p$ solvability for finite $p$ had not been
established even for parabolic Chord arc domains with parabolic uniformly rectifiable boundaries.

In \cite{H1}, \cite{HL}, \cite{LM}, \cite{LS}, the authors established the correct notion of (time-dependent) regular parabolic Lipschitz graphs from the point of view of parabolic singular integrals and parabolic measure. To expand a bit on this, recall that  $\psi:\mathbb R^{n-1}\times\mathbb R\to \mathbb R$ is called Lip(1,1/2)
(or ``parabolic Lipschitz", and we shall sometimes simply write $\psi\in PLip$)
with constant $b$, if
\begin{eqnarray}\label{1.1}
|\psi(x,t)-\psi(y,s)|\leq b(|x-y|+|t-s|^{1/2})\
\end{eqnarray}
whenever $(x,t)\in\mathbb R^{n}$, $(y,s)\in\mathbb R^{n}$.
An open set $\Omega\subset\mathbb R^{n+1}$ is said to be an
(unbounded) Lip(1,1/2) (or $PLip$)  graph
domain, with constant $b$, if
\begin{eqnarray}\label{1.1a}
\Omega=\Omega_\psi=\{(x,x_n,t)\in\mathbb R^{n-1}\times\mathbb R\times\mathbb R:x_n>\psi(x,t)\}
\end{eqnarray} for some Lip(1,1/2)  function $\psi$ having Lip(1,1/2)  constant bounded by $b$.
A function
$ \psi = \psi ( x, t ) : \mathbb R^{n-1}\times\mathbb R\to \mathbb R$ is called a {\em Regular Parabolic} Lip(1,1/2)
function (and we shall write $\psi \in RPLip$)
with parameters $b_1$ and $b_2$, if $\psi$ % has compact support and
satisfies   \begin{eqnarray} \label{1.7}
(i)&&|\psi(x,t)-\psi(y,t)|\leq b_{1}|x-y|, x, y\in \mathbb R^{n-1}, t\in\mathbb R,\notag\\
(ii)&& D_{1/2}^t\psi\in BMO(\mathbb R^n), \ \ \|D_{1/2}^t\psi\|_*\leq b_2<\infty.
\end{eqnarray}
 It is well known, and essentially due to Strichartz \cite{Stz}
 (but see also \cite{HL},  \cite{H2}), that if $\psi\in RPLip$
with parameters $b_1$ and $b_2$, then $\psi$ is Lip(1,1/2)  with constant $b=b(b_1,b_2)$.
 Here   $ D_{1/2}^t \psi  (x, t) $ denotes
the $ 1/2 $ derivative in $ t $ of $ \psi ( x, \cdot ), x \in \mathbb{R}^{n-1}$ fixed,
and $BMO(\Rn)$ is the usual
parabolic BMO space consisting of all $f\in L^1_{loc}(\Rn)$ (modulo constants)
such that
\[\|f\|_*:=\sup_R\fint_R|f(x,t)-f_R|\,\d x\d t <\infty\,,\]
where $R$ denotes a parabolic cube in $\Rn$, having dimensions $r\times ...\times r\times r^2$ for some $r>0$,
and $f_R:= \fint_R f$.

This half derivative in time can be defined by way of the Fourier
transform (at least for compactly supported $\psi$), or by the formula
\begin{eqnarray} \label{1.8}
 D_{1/2}^t  \psi (x, t)  \equiv \hat c \int_{ \mathbb R }
\, \frac{ \psi ( x, s ) - \psi ( x, t ) }{ | s - t |^{3/2} } \, \d s
\end{eqnarray} for properly chosen $ \hat c$.

As noted above, every $RPLip$ function is, in particular, Lip(1,1/2), i.e. the $RPLip$ condition is stronger than
Lip(1,1/2).
In fact, it is strictly stronger: there are examples of
functions $\psi$ which are  Lip(1,1/2) but not $RPLip$, see \cite{LS},
\cite{KW}.

We call $\Omega\subset\mathbb R^{n+1}$ an (unbounded) regular parabolic Lip(1,1/2)  graph domain
(or simply an $RPLip$ graph domain),
with constants $(b_1,b_2)$, if \eqref{1.1a} holds for some regular parabolic Lip(1,1/2)  function $\psi$ having constants
  $(b_1,b_2)$. An important insight in \cite{KW}, \cite{H1}, \cite{HL}, \cite{LM}, \cite{LS}, is that  from the
  perspective of  parabolic singular integrals and parabolic measure,
  the Lip(1,1/2)  condition alone
  does not suffice; instead the problems have
  to be framed in the context of  {\em regular} parabolic  Lip(1,1/2)  graph domains and this induces additional complexity in the parabolic setting compared to the elliptic situation.

In \cite{HLN1}, \cite{HLN2} the third and fifth author, together with John Lewis,  introduced a  notion of parabolic uniformly rectifiable sets and proved the existence of big pieces of regular parabolic Lipschitz graphs under the additional assumption that $\Sigma$ is Reifenberg flat in the parabolic sense.
These results were the first of their kind in the context of parabolic problems and the studies \cite{HLN1}, \cite{HLN2} were motivated by the study of parabolic or caloric measures in rough domains. Still, up to very recently no systematic and correct studies of parabolic uniformly rectifiable sets have
appeared in the literature.  In the series including \cite{BHHLN1}, \cite{BHHLN2}, and the present paper, we attempt to rectify this by conducting a thorough and detailed study of these objects.
In particular, in \cite{BHHLN1} we prove,
among other
things, that parabolic uniformly rectifiable sets  satisfy a corona decomposition
with respect to {\em regular} Lip(1,1/2) graphs. In \cite{BHHLN2}, we obtain a converse to this  result from  \cite{BHHLN1},
as we prove that corona decomposition
with respect to {\em regular} Lip(1,1/2) graphs implies parabolic uniformly rectifiability. This converse
is a  consequence of more general results established in  \cite{BHHLN2}.  In combination, \cite{BHHLN1} and
 \cite{BHHLN2} prove that, just as in the elliptic
 setting of \cite{DS} and \cite{DS1},  we can characterize
 parabolic uniform rectifiability in terms of the existence
of a corona decomposition with
respect to an appropriate family of graphs ({\em regular} Lip(1,1/2) graphs).
In addition we obtain that all sufficiently ``nice" parabolic singular integral operators are $L^2$
bounded on a parabolic uniformly rectifiable set.

It is true that  in \cite{RN1, RN2,RN3},  the author took on the ambitious challenge to develop the theory of
parabolic uniformly rectifiable sets. Unfortunately though,  in \cite{RN1,RN2} the author either gives no proofs of his claims or supplies proofs which have gaps, a few of which, pertaining to \cite{RN1},
we pinpoint in \cite{BHHLN1}.
For now, let us point out three such errors or gaps in \cite{RN2}, as these are
directly relevant to the results in the present paper.    First, \cite[Lemma 6.2]{RN2} is essentially
our Theorem 2 stated above, and is stated in \cite{RN2} without proof, except for the claim that it is
essentially proved in \cite{HLN1}.  In fact, had that been the case, the authors of \cite{HLN1} would
have stated their results that way.  To be sure, our proof here follows that of \cite{HLN1} to some extent,
but an additional non-trivial idea, borrowed from \cite{DS}, is also used, in order to remove the extra flatness
assumption (mentioned above) imposed in \cite{HLN1}.  Second, \cite[Theorem 3.1]{RN2} is essentially
our Corollary 1 above, and relies on \cite[Lemma 5.3]{RN2}, which is a parabolic version of
a deep (elliptic) result of \cite{BL}.
However, the argument in \cite{RN2} relies on an application of Safonov's time-backwards
(i.e.,  non time-lagged) Harnack inequality
(see \cite{SY})
to solutions which do {\em not} vanish on $\Sigma$ (and thus
to which Safonov's result is inapplicable in any case), in a domain
which need not verify the Harnack Chain condition, a setting in which Safonov's result has not been proved.
Consequently, the proof of the parabolic version of the result of \cite{BL} (which may be found in
\cite{GH})
is rather more delicate than in the
elliptic case, as one is forced to account for the time lag in the parabolic Harnack inequality.
Finally, in \cite[Theorem 6.1]{RN2}, there is a claim (without proof)
that a 2-sided corkscrew condition yields big pieces of Lip(1,1/2) graphs (and even interior big pieces),
via the method of \cite{DJ}, without any mention of time-synchronization (even in a weak sense).
It is not clear to the present authors
how such a result might be proved, given the distinguished nature of the time direction in parabolic problems.
Perhaps it is true, but a proof should be given. 
We have not
checked in detail the validity of the argument in \cite{RN3}, as the result claimed
there is proved using a method entirely different to ours in
 \cite{BHHLN2}.

In \cite{HLN1}, \cite{HLN2} the assumption that $\Sigma$ is Reifenberg flat in the parabolic sense was motivated by  the  particular applications considered  but  this assumption may often seem too restrictive in other contexts. Therefore in \cite{NS} the fifth author, together with M. Str{\"o}mqvist, set out on the path to find and develop the parabolic analogue of the result of  G. David and D. Jerison \cite{DJ} mentioned above. In \cite{NS} it is proved that if $\Sigma\subset\mathbb R^{n+1}$ is a closed set which is Ahlfors-David regular in the parabolic sense, see Definition \ref{def1.ADR} below, and if  $\Sigma$
 satisfies what the authors called a synchronized two cube condition, then $\Sigma$ contains uniform big pieces of Lip(1,1/2) graphs by adapting the original arguments of \cite{DJ}.

To elaborate on the synchronized two cube condition, if $\Sigma  \subset \mathbb R^{n+1}$ is parabolic  Ahlfors-David regular  in the sense of Definition \ref{def1.ADR}, then $\Sigma$ is said to satisfy a {synchronized two cube condition} with constant $\gamma_1\in (0,1)$  if there exist, for all $(X,t)\in \Sigma$, $T_0<t<T_1$ and $0<r<\diam\Sigma$, two parabolic cubes $Q_\rho(X_1,t_1)$, $Q_\rho(X_2,t_2)$,  both contained in $Q_r(X,t)$, such that $Q_\rho(X_1,t_1)\cap(\R^{n}\times(T_0,T_1))$ and $Q_\rho(X_2,t_2)\cap(\R^{n}\times(T_0,T_1))$ belong to different connected components of $\mathbb R^{n+1}\setminus\Sigma$, and
\begin{eqnarray}\label{sync}
\gamma_1r\leq\rho<r,\ t_1=t=t_2.
\end{eqnarray}
Note that the condition as stated in \eqref{sync} is quite rigid as  the two cubes  $Q_\rho(X_1,t_1)$, $Q_\rho(X_2,t_2)$ have to satisfy
$t_1=t=t_2$, where $t$ is the time component of the original point $(X,t)$ fixed on the boundary. A more flexible condition would be to relax \eqref{sync} and to assume that $\Sigma$ instead satisfies a {weak synchronized two cube condition} with constant $\gamma_1$, in the sense that there exist, for all $(X,t)\in \Sigma$, $T_0<t<T_1$ and $0<r<\diam\Sigma$, two parabolic cubes $Q_\rho(X_1,t_1)$, $Q_\rho(X_2,t_2)$, as above and both contained in $Q_r(X,t)$, but with \eqref{sync} replaced by
\begin{eqnarray}\label{sync+}
\mbox{$\gamma_1r\leq\rho<r$, $t_1=t_2$}.
\end{eqnarray}
\eqref{sync+} is weaker compared to \eqref{sync} as the cubes $Q_\rho(X_1,t_1)$, $Q_\rho(X_2,t_2)$ still have to have the same time coordinate but this coordinate makes no explicit reference to time coordinate of the original point $(X,t)$ fixed on the boundary.

The discussion of the weak synchronized two cube condition leads us to the main
contributions of this paper.
First, assuming that $\Sigma\subset\mathbb R^{n+1}$ is a closed set which is parabolic
Ahlfors-David regular, and satisfies the general
(i.e., not necessarily synchronized) 2-cube (i.e., corkscrew)
condition, we prove that certain natural additional geometrical assumptions
imply a self-improvement of the corkscrew property, namely that if
 in addition $\Sigma$ is either
 time-forwards Ahlfors-David regular, time-backwards Ahlfors-David regular, or
parabolic uniform rectifiable, then in fact $\Sigma$ satisfies the
weak time-synchronized two cube condition discussed above.
Second, we show that
the results of \cite{NS} continue to hold with the strong time synchronized two cube condition replaced by the weak version;
more precisely, using the weak synchronized two cube condition, and revisiting the argument in \cite{NS},
we are able to establish uniform big pieces of Lip(1,1/2) graphs. Third, assuming
that $\Sigma  \subset \mathbb R^{n+1}$
is parabolic uniform rectifiable and satisfies the weak synchronized two cube condition we
are able to establish not only
uniform big pieces of Lip(1,1/2) graphs but also uniform big pieces of {\em regular}
Lip(1,1/2) graphs. This is what we need from the perspective of parabolic singular integrals and parabolic measure. Note that the latter conclusion was also established  (in partial form) in \cite{NS}, where the final part of the argument was left out and the authors referred to the corresponding arguments in \cite{HLN1}. Strictly speaking, the argument referred to in \cite{HLN1} applies only if the norm of the Carleson measure underlying the notion of parabolic uniform rectifiability is sufficiently small, depending on the dimension n and the constant defining the Ahlfors-David regularity, and thus, the proof in \cite{NS} applies in the presence of such a size restriction.  In this paper we remove this size restriction, and spell out the details of the argument using a parabolic version of a summation approach introduced in [DS]. Note that if, as in \cite{HLN1}, $\Sigma$ has the separation property and is
$\delta$-Reifenberg flat, then $\Sigma$ satisfies a synchronized two cube condition. This implication can
not be reversed. Hence, in particular and as already noted in \cite{NS}, our result generalizes
Theorem 1 in \cite{HLN1} beyond the hypothesis of $\Sigma$ being Reifenberg flat.

In addition, we give a parabolic counterpart of
 the main result in \cite{AHMNT} by proving
 that if $\Omega\subset\mathbb R^{n+1}$ is a domain defined as a connected component of $\mathbb R^{n+1}\setminus\Sigma$, if $\Om$ is a one-sided parabolic chord arc domain (see
 Definition \ref{Chord2}), and if  $\Sigma$ is parabolic uniformly rectifiable, then $\Om$ is in fact a
 parabolic chord arc domain (see Definition \ref{Chord3}). To prove this we use  \cite[Theorem 4.16]{BHHLN2} and
 \cite[Theorem 4.15(iii)]{BHHLN2}, and hence also \cite{BHHLN1}, to first conclude that if $\Sigma$ is parabolic uniformly rectifiable, then $\Sigma$ satisfies the parabolic bilateral weak geometric lemma, from which
 we then deduce the existence of exterior
 corkscrew points (and hence the chord-arc condition) more or less as in the elliptic case treated in
 \cite{AHMNT}, using the Harnack chain condition.

 The rest of the paper is organized as follows. In Section \ref{sec1} we introduce the
 geometric notions and terminology used in the paper.
 In Section \ref{sec2} we state the results proved in the paper: Theorems \ref{tftbtscorkscrews}-\ref{ParaAHMNT.thrm},
 and Theorems \ref{th1}-\ref{th2} with their respective corollaries.
 In particular, Theorems \ref{tftbtscorkscrews} and \ref{urtscorkscrews} give geometric criteria for the existence of
 weak time-synchronized corkscrew points,
 and Theorem \ref{ParaAHMNT.thrm} provides the geometric foundation for  Theorem \ref{th3+}.
 Theorem \ref{th1} (a precise version of ``Theorem 1" stated above), and Theorem \ref{th2}  and
 Corollary \ref{cor2} (together a precise version of ``Theorem 2" stated above) are the main results of the present work.
 In Section \ref{sec2}, we also briefly discuss, for the record, applications of our geometric results to the study of parabolic/caloric measure along the lines of \cite{NS} and \cite{GH}. In particular,
 we give Theorem \ref{NSweak}, which is the precise version of ``Corollary 1" stated above, and
we present
 Theorem \ref{th3+}, a
 precise version of ``Theorem 3" stated above.
Section \ref{sec3} is devoted to the proofs of Theorems \ref{tftbtscorkscrews}-\ref{ParaAHMNT.thrm} and Theorem \ref{th1} is proved in Section \ref{sec4}. The proof of Theorem \ref{th2} is given in Section \ref{sec5}.
In Section \ref{sec6}, we present two counter-examples to show that our weak time-synchronization
hypotheses are strict improvements over those
in \cite{NS}.

\section{Preliminaries and geometrical notions}\label{sec1}

 Points in  Euclidean space-time % $ (n+1) $-space
$ \mathbb R^{n+1} $ are denoted by $\Xb:= (X,t) = ( x_1,
 \dots,  x_n,t)$,  where $ X = ( x_1, \dots,
x_{n} ) \in \mathbb R^{n } $ and $t$ represents the time-coordinate. We will always assume that $n\geq 1$.  We let  $ \bar E, \ar E$,
  be the closure and boundary of the set $ E \subset
\mathbb R^{n+1}$. $  \lan \cdot ,  \cdot  \ran $  denotes  the standard inner
product on $ \mathbb R^{n} $ and we let  $  | X | = \lan X, X \ran^{1/2} $ be
the  Euclidean norm of $ X. $  We let $\|(X,t)\|:=|X|+|t|^{1/2}$ denote the parabolic length
of a space-time vector $\Xb =(X,t)$.
Given $(X,t), (Y,s)\in\mathbb R^{n+1}$,
we set
$$ %d(X,t,Y,s)=
d_p(X,t,Y,s): =\|(X,t)-(Y,s)\|= |X-Y|+|t-s|^{1/2}\,,$$
 and we define $ d_p ( X,t, E ) $  to equal the parabolic distance, defined with respect to $d_p(\cdot,\cdot)$, from
 $  (X,t) \in \mathbb R^{n+1} $ to $ E$.  We let
\[ Q_r ( X, t ) \, := \, \{ ( Y, s )\in \mathbb R^{ n + 1 } : | y_i  - x_i| < r, | t - s | < r^2  \},\]
 whenever $(X,t)\in
\mathbb R^{n+1}$, $r>0$, and we call $Q_r(X,t) $ a parabolic cube of ``length" $r$.
We may sometimes leave the center implicit, and write simply $Q_r$ to denote such a cube.
We also introduce the time-forward and time-backwards halves of $Q_r(X,t)$ as follows:
\begin{align*}
 Q_r^+ ( X, t )&:= Q_r( X, t )\cap \{ ( Y, s )\in \mathbb R^{ n + 1 } :s\geq t\},\notag\\
 Q_r^- ( X, t ) &:= Q_r( X, t )\cap \{ ( Y, s )\in \mathbb R^{ n + 1 } :s\leq t\}.
 \end{align*}
  We let $ \d x $ denote  Lebesgue $ n $-measure on    $ \mathbb R^{n}$ and
   given a number $\eta \geq 0$, we let $\H^\eta$ denote
 standard $\eta$-dimensional Hausdorff measure.
  We also define {\em parabolic} Hausdorff measure of {\em homogeneous}
  dimension $\eta$, denoted
  $\H_{\text p}^\eta$, in the same way that one defines standard Hausdorff measure, but using coverings
  by {\em parabolic} cubes, i.e., for $\delta>0$, and for $A\subset \R^{n+1}$, we set
  \[ \H_{\text{p},\delta}^\eta(A):= \inf \sum_k \diam_p(A_k)^\eta\,,
  \]
  where the infimum runs over all countable coverings of $A$, denoted $(A_k)_k$, with $\diam(A_k)\leq \delta$ for all $k$,
  and then define
  \[
  \H_{\text p}^\eta (A) := \lim_{\delta\to 0^+} \H_{\text{p},\delta}^\eta(A)\,.
  \]
As is the case for classical Hausdorff measure, $ \H_{\text{p}}^\eta$ is a Borel regular measure.
We refer the reader to \cite[Chapter 2]{EG2} for a discussion of the basic properties of standard
Hausdorff measure.  The arguments in \cite{EG2} adapt readily to treat $  \H_{\text{p}}^\eta$.
In particular, one obtains a measure equivalent to   $\H_{\text{p}}^\eta$ if one defines
$H_{\text{p},\delta}^\eta$ in terms of coverings by arbitrary sets of parabolic diameter at most $\delta$, rather than
cubes.  As in the classical setting, we define the parabolic homogeneous dimension of a set
$A\subset \R^{n+1}$ by
\[\H_{\text{p}, \text{dim}}(A):= \inf\left\{ 0\leq \eta<\infty \,| \,\H^\eta(A)=0\right\}\,.
\]
We observe that $\H_{\text{p}, \text{dim}}(\R^d)=d+1$; in particular $\H_{\text{p}, \text{dim}}(\ree)=n+2$.

Given a closed set $\Sigma \subset \mathbb R^{n+1}$
  of  homogeneous dimension $\H_{\text{p}, \text{dim}}(\Sigma)=n+1$, we
then define ``surface measure" on $\Sigma$ by
\begin{equation}\label{sigdef}
\sigma = \sigma_\Sigma:=  \H_{\text{p}}^{n+1}\lfloor_\Sigma\,.
\end{equation}
  We observe that this measure is apparently different to the one typically used in
  previous work on parabolic equations with time-varying boundaries;
  see, e.g., \cite{KW,LM,HL,HL1,HLN1,HLN2}. In those works, the following version
  of ``surface measure"  was used:  given a closed set $\Sigma \subset \mathbb R^{n+1}$,
for a Borel subset $E \subset \Sigma$,
 % Borel set $ E \subset \mathbb R^{n + 1 } $
 we set
 \begin{equation}\label{sigsdef}
 \si^{\bf s} ( E ):= \iint_E \, \d \si_t \,  \d t \,,
 \end{equation}
 where $\d\sigma_t $ denotes the restriction of $\H^{n-1}$
 %$(n - 1)$-dimensional Hausdorff measure
 to the time
slice $ E\cap (\mathbb R^{n} \times \{t\} )$.   It turns out that in the cases of greatest interest to us,
the ``slice" measure $\si^{\bf s}$, and the measure $\sigma$ defined in \eqref{sigdef}, are
equivalent (similar observations have been made previously in \cite{He} and \cite{MP}),
although they need not be equivalent in general.
\begin{remark}\label{r-measures}
Some further remarks are in order.
\begin{itemize}
\item[(i)] If $\sigma^{\bf s}$ (or for that matter
{\em any} measure $\mathfrak{m}$ defined on $\Sigma$) satisfies the parabolic
Ahlfors-David Regularity (p-ADR) condition (see Definition \ref{def1.ADR} below),
then so does $\si$, and in that case the two measures are of course equivalent.
This follows easily from the definition of
$\Hpn$ measure, and it is really just the same phenomenon that occurs in the classical (elliptic) case;
see \cite{DS}.

\smallskip

\item[(ii)] Consequently, if $\Sigma$ is a Lip(1,1/2) graph, then $\sigma \approx \sigma^{\bf s}$.
In particular, on a hyperplane $\pc\subset \ree$ parallel to the $t$-axis, which we may identify with
Euclidean space $\R^n$, we have that
$\H^n\lfloor_\pc \approx \Hpn\lfloor_\pc$, since the former is just $n$-dimensional Lebesgue measure
on $\pc$, which is parabolic ADR on $\Sigma = \pc$.

\smallskip

\item[(iii)]  If $\pc$ is a hyperplane parallel to the $t$-axis, and if $\pi$ is the
orthogonal projection operator onto $\pc$, then $\Hpn$ measure does not increase
under the action of $\pi$.  In particular, by virtue of item (ii), we have for any Borel set $A$
that $\H^n(\pi(A)) = \Hpn(\pi(A)) \leq \Hpn(A)$.

\smallskip

\item[(iv)]If  $\Sigma$ is parabolic uniformly rectifiable (p-UR; see Definition
\ref{def1.UR} below),
where we can initially define p-UR with respect either to $\sigma$, or to $\si^{\bf s}$,
then the two measures are equivalent.

\smallskip

\item[(v)] On the other hand,
the measures are not equivalent in general, even in the p-ADR setting.  In fact,
$\sigma^{\bf s} \lesssim \sigma$,
but the other direction does not need to hold.
\end{itemize}
Item (iii) follows exactly as in the classical case (see \cite[pp 75-76]{EG2}),
as one may readily verify using that the orthogonal projection operator is
Lipschitz with norm 1 with respect to the parabolic
metric, i.e., $\|\pi(X,t) - \pi(Y,s)\| \leq \|(X,t) -(Y,s)\|$.
Items (iv) and (v) are non-trivial.
We shall provide details of the proofs of the latter two facts in our forthcoming paper \cite{BHHLN1}.
See also \cite{He} and \cite{MP}.
\end{remark}

As above,
$\Sigma \subset \mathbb R^{n+1}$ will denote a closed set.
For  $ (X, t ) \in \Sigma $ and $r>0$, we shall
denote a ``surface cube" on $\Sigma$ by
$$\Delta(X,t,r):=\Sigma\cap Q_r(X,t)\,,$$
and its time-forward and time-backward halves by
\begin{align*}
 \Delta^+(X,t,r)&:= \Delta(X,t,r))\cap \{ ( Y, s )\in \mathbb R^{ n + 1 } :s\geq t\},\notag\\
\Delta^-(X,t,r) &:= \Delta(X,t,r))\cap \{ ( Y, s )\in \mathbb R^{ n + 1 } :s\leq t\}.
 \end{align*} The extremal time coordinates of $\Sigma$ will be denoted by $T_0=\inf\{t:\exists (X,t)\in\Sigma\}$ and $T_1=\sup\{t:\exists (X,t)\in\Sigma\}$. When we consider an open set
 $\Omega\subset \ree$, % defined as a connected component of $\R^{n+1}\setminus\Sigma$, $
 we shall define $T_0$ and $T_1$ relative to $\Sigma= \partial\Omega$.

 Given a set $A\subset \ree$, we denote its topological interior by ${\tt int}(A)$.

\subsection{Parabolic Ahlfors-David regular sets}

\begin{definition}\label{def1.ADR}{\bf (Parabolic Ahlfors-David Regularity).} Let
$\Sigma \subset \mathbb R^{n+1}$ be a closed set.  We say that a measure $\mathfrak{m}$ defined
on $\Sigma$ is {\em parabolic Ahlfors-David regular}, parabolic ADR for short
(or simply p-ADR, or just ADR)
with constant $M\geq 1$,  if
\begin{equation} \label{eq1.ADRha-general}
M^{-1}\, r^{n+1} \leq \mathfrak{m}(\Delta(X,t,r)) \leq M\, r^{n+1},\end{equation}
whenever $0<r<\diam{\Sigma}$,  $(X,t)\in \Sigma$, $T_0<t<T_1$ and where $\diam{\Sigma}$ is the
(parabolic) diameter of $\Sigma$ (which may be infinite).
As noted above (see Remark \ref{r-measures} (i)), if
\eqref{eq1.ADRha-general} holds for any measure $\mathfrak{m}$ on $\Sigma$, then it holds for
$\sigma$ as in \eqref{sigdef}, i.e. for a possibly different but still universal choice of $M$,
\begin{equation} \label{eq1.ADRha}
M^{-1}\, r^{n+1} \leq \sigma(\Delta(X,t,r)) \leq M\, r^{n+1},\end{equation}
and in this case we simply say that
$\Sigma$ is  parabolic ADR (p-ADR, or just ADR).
\end{definition}
\begin{definition}\label{def.TBTFTSADR}{\bf (Time-Forward/Time-Backward/Time-Symmetric ADR).}
Let $\Sigma \subset \mathbb{R}^{n+1}$ be a closed set which is parabolic ADR as in Definition
\ref{def1.ADR} above.  We say that $\Sigma$ is {\em parabolic time-forward ADR},
or TFADR for short, if $T_1 = \infty$ and there exists a uniform constant  $M'\geq 1$, such that
 for each $(X,t) \in \Sigma$ with $T_0<t$ we have
 \begin{align*}
 \sigma(\Delta^+(X,t,r)) \geq  (M')^{-1} r^{n+1}.
 \end{align*}
Similarly, we say that $\Sigma$ is {\em parabolic time-backward ADR}, or parabolic TBADR for short, if $T_0 = -\infty$ and there exists a uniform constant $M'\geq 1$, such that
 for each $(X,t) \in \Sigma$ with $t<T_1$ we have
 \begin{align*}
 \sigma(\Delta^-(X,t,r)) \geq (M')^{-1}  r^{n+1}.
 \end{align*}
 If $\Sigma$ is both time-forwards ADR and time-backwards ADR, we say that $\Sigma$ is {\em time-symmetric ADR}
 (TSADR for short).
\end{definition}

\begin{definition}\label{def.dyadiccube}{\bf (Dyadic Cubes on an ADR Set).}
If $\Sigma$ is ADR, then  $(\Sigma,d_p,d\sigma)$ is a space of homogeneous type $\Sigma$ and as such
admits a parabolic dyadic decomposition (see \cite {Ch} for the construction, as well as \cite{HK}
for an alternative approach; the original construction, in the elliptic ADR setting, appears in  \cite{D1}, \cite{D2}).
That is, there exists a
constant $ \alpha>0$  such that for each $k \in \mathbb{Z}$
there is a collection of Borel sets, $\mathbb{D}_k$,  which we will call (dyadic) cubes, such that
$$
\mathbb{D}_k:=\{\mathcal{Q}_{j}^k\subset\Sigma: j\in \mathfrak{I}_k\},$$ where
$\mathfrak{I}_k$ denotes some  index set depending on $k$ (if $\Sigma$ is unbounded, then we may simply take
$\mathfrak{I}_k$ to be the set of positive integers, for each $k$), satisfying
\begin{eqnarray*}\label{cubes}
(i)&&\mbox{$\Sigma=\cup_{j}\mathcal{Q}_{j}^k\,\,$ for each
$k\in{\mathbb Z}$.}\notag\\
(ii)&&\mbox{If $m\geq k$ then either $\mathcal{Q}_{i}^{m}\subset \mathcal{Q}_{j}^{k}$ or
$\mathcal{Q}_{i}^{m}\cap \mathcal{Q}_{j}^{k}=\emptyset$.}\notag\\
(iii)&&\mbox{For each $(j,k)$ and each $m<k$, there is a unique
$i$ such that $\mathcal{Q}_{j}^k\subset \mathcal{Q}_{i}^m$.}\notag\\
(iv)&&\mbox{$\diam\big(\mathcal{Q}_{j}^k\big)\lesssim 2^{-k}$.}\notag\\
(v)&&\mbox{$\mathcal{Q}_{j}^k\supset \Sigma\cap Q_{\alpha2^{-k}}(Z^k_{j},t^k_{j})$ for some
$(Z^k_{j},t^k_j)\in\Sigma$ (the ``center" of $\qc_j^k$).}
\end{eqnarray*}
The dyadic cubes also enjoy a ``thin boundary property", but we shall not make use of that
fact in the present work.
\end{definition}

\begin{remark}\label{cuberemark} To avoid possible confusion, let us note that
we shall deal with four sorts of parabolic cubes in the sequel,
each with distinct notation:
the cubes $Q_r = Q_r(X,t)\subset \ree$, and the surface cubes $\Delta = \Delta(X,t,r):= Q_r(X,t) \cap\Sigma$,
defined above;  the {\em dyadic} ``cubes" on $\Sigma$, as in Definition \ref{def.dyadiccube}, which we denote by
the calligraphic $\mathcal{Q}$, and finally, $n$-dimensional parabolic cubes, defined on the hyperplane
$\pc:= \R^{n-1}\times \{0\} \times \R\cong \R^n$, which we define analogously to $Q_r$ in one less spatial dimension,
and which we denote by $I_r=I_r(x,t)$ for $(X,t) = (x,0,t)\in \pc$ (equivalently $I_r(x,t):= Q_r(x,0,t)\cap \pc$).

Mildly abusing notation, we write $\ell(Q_r):=r$, $\ell(I_r): =r$, $\ell(\Delta(X,t,r)):=r$,
and $\ell(\mathcal{Q}):= 2^{-k}$ when $\mathcal{Q} \in \mathbb{D}_k$.

We shall also use the letter $I$, and sometimes $J$,
to denote a {\em dyadic} parabolic cube in $\P\cong \R^n$; in particular,
such a cube has dimensions
$2^m\times...\times 2^m\times 2^{2m}$ for some integer $m$, and in this case we write $\ell(I) = 2^m$.
We apologize for the fact that this notation for side length
differs from that for the cubes $I_r$, by a factor of 2.
\end{remark}

\subsection{Parabolic uniform rectifiability}

\begin{definition}\label{def1.UR-} Assume that $\Sigma  \subset \mathbb R^{n+1}$ is parabolic ADR  in the sense of Definition \ref{def1.ADR}. Let
\begin{eqnarray*} \beta( Z, \tau, r  ) := \inf_P  \biggl ( r^{ - n -1 } \, \iint_{  \Delta ( Z, \tau,r) }  \, \biggl (\frac {d ( Y,s, P )}{r}\biggr )^2  \d \sigma (Y, s )\biggr )^{1/2}, \end{eqnarray*}
whenever $(Z,\tau)\in \Sigma $, $r>0$, and  where the infimum is taken with respect to all   $ n $ dimensional planes $ P $ containing a line
parallel to the $ t $ axis. Let
\begin{eqnarray}\label{me} \d \nu ( Z, \tau,
r  )  \, :=  \, \beta^2  ( Z, \tau, r) \,\d\sigma ( Z, \tau) \, r^{ - 1 }
\d r. \end{eqnarray}
We say that $ \nu$  is a Carleson measure on $    \Delta( Y,s,R )  \times ( 0, R ) $ if there exists $ \tilde M <
\infty $ such that
\begin{eqnarray}\label{1.9}
 \nu ( \Delta( X, t,\rho )  \times ( 0, \rho) ) \, \leq \,
\tilde M  \, \rho^{ n + 1 },
\end{eqnarray}
 whenever $ ( X, t  ) \in \Sigma  $ and $ Q_\rho ( X, t ) \subset Q_R ( Y, s )$.  The least such $ \tilde M  $ in \eqref{1.9} is
called the Carleson norm of $\nu$ on  $\Delta( Y,s,R ) \times ( 0, R ) $.
\end{definition}

\begin{definition}\label{def1.UR}{\bf (Parabolic Uniform Rectifiability).}  Assume that $\Sigma  \subset \mathbb R^{n+1}$ is parabolic ADR  in the sense of Definition \ref{def1.ADR} with constant $M$. Let $\nu$ be defined as in \eqref{me}. Then
 $\Sigma$ is {\em parabolic Uniformly Rectifiable},  parabolic UR (or simply p-UR) for short,  with UR constants $(M,\tilde M)$ if
\begin{eqnarray}\label{eq1.sf}
\| \nu \|:=\sup_{(X,t)\in\Sigma,\ \rho>0}  \rho^{ -n - 1 }\nu ( \Delta(X,t,\rho) \times ( 0, \rho) ) \, \leq \,
\tilde M.
\end{eqnarray}
\end{definition}

\subsection{Corkscrews and the  weak time-synchronized two cube condition} In the following definitions, Definitions \ref{corkscrews}-\ref{tsynk+}, we consistently assume that
$\Sigma  \subset \mathbb R^{n+1}$ is a closed set.

\begin{definition}\label{corkscrews}{\bf (Corkscrew, 2-Cube Condition).} Let $\gamma_0\in (0,1)$ be given. We say that $\Sigma$ satisfies a
{\em corkscrew condition} (more precisely, {\em 2-sided corkscrew condition}, or {\em 2-cube condition})
with constant $\gamma_0$, if there exists, for all $(X,t) \in \Sigma$, $T_0<t<T_1$ and $0<r<\diam\Sigma$, two parabolic cubes  $Q_\rho(X_1,t_1)$, $Q_\rho(X_2,t_2)$, both contained in $Q_r(X,t)$, such that $Q_\rho(X_1,t_1)\cap(\R^{n}\times(T_0,T_1))$ and $Q_\rho(X_2,t_2)\cap(\R^{n}\times(T_0,T_1))$ belong to different connected components of $\mathbb R^{n+1}\setminus\Sigma$, and with $$\gamma_0 r \leq \rho < r.$$
\end{definition}

\begin{definition}\label{tsynk}{\bf (Weak Time-Synchronized 2-Cube Condition).}
 Let $\gamma_1\in (0,1)$ be given. We say that $\Sigma$ satisfies a weak time-synchronized two cube condition with constant $\gamma_1$, if there exist, for all $(X,t)\in \Sigma$, $T_0<t<T_1$ and $0<r<\diam\Sigma$, two parabolic cubes $Q_\rho(X_1,t_1)$, $Q_\rho(X_2,t_2)$, both contained in $Q_r(X,t)$, such that $Q_\rho(X_1,t_1)\cap(\R^{n}\times(T_0,T_1))$ and $Q_\rho(X_2,t_2)\cap(\R^{n}\times(T_0,T_1))$ belong to different connected components of $\mathbb R^{n+1}\setminus\Sigma$, and with
$$\gamma_1r\leq\rho<r\,,\quad t_1=t_2\,.$$
\end{definition}

\noindent{\em Remark}.  The (strong) synchronized 2-cube condition considered in \cite{NS}
entailed the further requirement that the cubes $Q_\rho(X_1,t_1)$ and $Q_\rho(X_2,t_2)$ be synchronized also
with $Q_r(X,t)$, i.e., $t_1=t=t_2$.

\begin{definition}\label{corkscrews+-}{\bf (Interior Corkscrew Condition).} Let $\gamma_0\in (0,1)$ be given.  Let
$\Omega\subset\mathbb R^{n+1}$ be an open set with boundary $\pom=\Sigma$.
We say that $\Omega$ satisfies an {\em interior corkscrew condition} with constant $\gamma_0$, if there exists, for all $(X,t) \in \Sigma$, $T_0<t<T_1$ and $0<r<\diam\Sigma$, a parabolic cube  $Q_\rho(X_1,t_1)$, contained in $Q_r(X,t)$, such that $Q_\rho(X_1,t_1)\cap(\R^{n}\times(T_0,T_1))\subset\Omega$  and with $$\gamma_0 r \leq \rho < r.$$
\end{definition}

\begin{definition}\label{corkscrews+}{\bf (Corkscrew Condition w.r.t. an open set $\Om$).}
 Let $\gamma_0\in (0,1)$ be given.  Let
$\Omega\subset\mathbb R^{n+1}$ be an open set with boundary $\pom=\Sigma$.
  We say that $\Om$ (or sometimes, in keeping with previous terminology,
  $\partial\Omega$) satisfies a {\em corkscrew condition} (more precisely {\em
  2-sided corkscrew condition})
  with constant $\gamma_0$, if there exists, for all $(X,t) \in \Sigma$, $T_0<t<T_1$ and $0<r<\diam\Sigma$, two parabolic cubes  $Q_\rho(X_1,t_1)$, $Q_\rho(X_2,t_2)$, both contained in $Q_r(X,t)$, such that $Q_\rho(X_1,t_1)\cap(\R^{n}\times(T_0,T_1))\subset\Omega$ and  $Q_\rho(X_2,t_2)\cap(\R^n\times(T_0,T_1))\subset\mathbb R^{n+1}\setminus
  \overline{\Omega}$, and with $$\gamma_0 r \leq \rho < r.$$
\end{definition}

\begin{definition}\label{tsynk+}{\bf (Weak Time-Synchronized 2-Cube Condition w.r.t. an open set).}
Let $\gamma_1\in (0,1)$ be given.
Let $\Omega\subset\mathbb R^{n+1}$ be an open set with boundary $\pom=\Sigma$.
We say that $\Om$ (or sometimes, in keeping with previous terminology,
  $\partial\Omega$)  satisfies a {\em weak time-synchronized two cube condition}
  with constant $\gamma_1$, if there exist, for all $(X,t)\in \partial\Omega$, $T_0<t<T_1$ and $0<r<\diam\Sigma$, two parabolic cubes $Q_\rho(X_1,t_1)$, $Q_\rho(X_2,t_2)$, both contained in $Q_r(X,t)$, such that $Q_\rho(X_1,t_1)\cap(\R^n\times(T_0,T_1))\subset\Omega$, $Q_\rho(X_2,t_2)\cap(\R^n\times(T_0,T_1))\subset\mathbb R^{n+1}\setminus \overline{\Omega}$, and with
  $$\gamma_1r\leq\rho<r\,,\quad t_1=t_2\,.$$
\end{definition}

\noindent{\em Remark}.  We observe that in Definition \ref{corkscrews+} (resp.  \ref{tsynk+}), we are assuming that
$\Sigma =\pom$ satisfies Definition \ref{corkscrews} (resp.,  \ref{tsynk}), but with the additional requirement that one of the stipulated components of $\ree\setminus \pom$ lies in $\Om$, at every scale and at every boundary point.

\subsection{Harnack chains and parabolic chord arc domains} In the following definitions, Definitions \ref{Chord1}-\ref{Chord3}, we consistently assume that
$\Sigma  \subset \mathbb R^{n+1}$ is a closed set and that
$\Omega\subset\mathbb R^{n+1}$
is a connected open set (a domain) with boundary $\pom=\Sigma$.
In addition we will simply assume $\diam\Sigma=\infty$, $T_0=-\infty$ and $T_1=\infty$, to avoid tedious notation. If $T_0$ or $T_1$ is finite, the interested reader can formulate the localized versions of the definitions.
\begin{definition}\label{Chord1}{\bf (Harnack Chain condition).}
We say that $\Om$ is Harnack chain connected
(or that it satisfies the {\em Harnack Chain condition})
with constants $\kappa > 100$ and $C_* \ge 1$ if the following holds.
For every $(U_1,s_1), (U_2,s_2) \in \Om$, with $$(s_2 -s_1)^{1/2} \ge \kappa^{-1} d_p((U_1,s_1), (U_2,s_2))$$ there exists a chain of parabolic cubes $\{Q_i\}_{i = 1}^\ell$, $Q_i = Q_{r_i}(X_i,t_i)$, $i = 1,2,\dots, \ell$ with $Q_i \in \Om$, such that
\begin{align*}
(i)&\mbox{ $(U_1,s_1) \in Q_1$ and $(U_2,s_2) \in Q_\ell$},\\
(ii)&\mbox{ $Q_{i + 1} \cap Q_i \neq \emptyset$, $i = 1, 2,\dots, \ell -1$},\\
(iii)&\mbox{ $(C_*)^{-1}\diam(Q_i) \le d(Q_i,\partial \Om) \le C_* \diam(Q_i)$},\\
(iv)&\mbox{ $t_{i +1} - t_i \ge (C_*)^{-1} r_i^2$ and}\\
(v)&\mbox{ the length of the chain, $\ell$, satisfies $\ell \le C_* \log_2\left( 2 + \frac{d((U_1,s_1), (U_2,s_2))}{\min_{i =1,2}d((U_i,s_i), \partial \Omega)} \right)$.}
\end{align*}
\end{definition}

\begin{definition}\label{Chord2}{\bf (One-sided parabolic chord-arc domain).}
We say that $\Om$ is a one-sided parabolic chord arc domain with constants $(M,\gamma_0,\kappa,C^\ast)$ if
\begin{align*}
(a)&\mbox{ $\partial \Om$ is parabolic Ahlfors-David regular with constant $M$},\\
(b)&\mbox{ $\Om$ satisfies the interior corkscrew condition with constant $\gamma_0$},\\
(c)&\mbox{ $\Om$ satisfies the Harnack chain condition with constants $(\kappa,C_*)$}.
\end{align*}
\end{definition}

\begin{definition}\label{Chord3}{\bf (Parabolic chord-arc domain).}
We say that $\Om$ is a  parabolic chord arc domain with constants $(M,\gamma_0,\kappa,C^\ast)$ if
\begin{align*}
(a)&\mbox{ $\partial \Om$ is parabolic Ahlfors-David regular with constant $M$},\\
(b)&\mbox{ $\partial\Om$ satisfies the (two-sided) corkscrew condition with constant $\gamma_0$},\\
(c)&\mbox{ $\Om$ satisfies the Harnack chain condition with constants $(\kappa,C_*)$}.
\end{align*}
\end{definition}

Note that the only difference between Definition \ref{Chord2} and Definition \ref{Chord3} relates to the corkscrew conditions stated in $(b)$: in  Definition \ref{Chord2} only interior corkscrews in the sense of Definition \ref{corkscrews+-} are assumed while in  Definition \ref{Chord3} the (full) corkscrew condition in the sense of Definition \ref{corkscrews+} is assumed.

\subsection{Uniform big pieces} Assume that $\Sigma  \subset \mathbb R^{n+1}$ is parabolic ADR  in the sense of Definition \ref{def1.ADR}. Let in the following $\pi$ denote the orthogonal projection onto the plane $\{(x,x_n,t)\in\mathbb R^{n-1}\times\mathbb R\times\mathbb R:x_n=0\}$. At instances we identify $ \mathbb R^{n}$ with $\mathbb R^{n-1}\times \{0\}\times \mathbb R$, and  put
\[ I_r ( z, \tau  ) = \{ ( y, s ) \in \mathbb R^{n} :
| y_i   - z_i   | < r, \, i = 1,\dots, n - 1, \, \,   | s - \tau |
< r^2  \} \]
for $( z, \tau ) \in \mathbb R^{n}$, $r > 0$.

\begin{definition}\label{bigp1}{\bf (Uniform Big Pieces of Lip(1,1/2)  graphs).}   We say that $\Sigma$ contains
uniform big pieces of Lip(1,1/2)  graphs with constants $(\epsilon, b)$ if the following condition holds: Given $ ( X, t ) \in \Sigma$, $T_0<t<T_1$ and $0< R <\diam \Sigma, $ there exists, after a
possible rotation in the space variable, a Lip(1,1/2) function $ \psi $ with constant $b$, and $\epsilon>0$, such that
\begin{eqnarray}\label{aa1-}
 \H^{n}(\pi(\Sigma_\psi\cap\Delta( X, t,R))) \geq \epsilon R^{n + 1 },
 \end{eqnarray}
 where
  \begin{eqnarray}\label{aa2}
 \Sigma_\psi:=\{(x,x_n,t)\in\mathbb R^{n-1}\times\mathbb R\times\mathbb R:x_n=\psi(x,t)\}.
 \end{eqnarray}
 \end{definition}

 \begin{remark}
  Note that \eqref{aa1-} implies, as Hausdorff measure does not increase under projections, that
 \begin{eqnarray}\label{aa3}
 \si (  \Sigma_\psi\cap\Delta( X, t,R) ) \geq \epsilon R^{n + 1 }.
 \end{eqnarray}
 \end{remark}

 \begin{definition}\label{bigp2}{\bf (Uniform Big Pieces of $RPLip$  graphs).}   We say that $\Sigma$ contains
uniform big pieces of {\em regular parabolic} Lip(1,1/2)  ($RPLip$ for short)
graphs with constants $(\epsilon,b_1,b_2)$  if \eqref{aa1-} and \eqref{aa2} hold whenever $ ( X, t ) \in \Sigma$, $T_0<t<T_1$ and $0< R <\diam\Sigma$, but  with a regular parabolic Lip(1,1/2) function $ \psi $, satisfying \eqref{1.7} with constants $b_1$, $b_2$, and for $\epsilon>0$.
 \end{definition}

 \begin{definition}\label{bigp3}{\bf (Interior Big Pieces of Lip(1,1/2)   graphs).}
 Let $\Omega\subset\mathbb R^{n+1}$ be an open set with boundary $\pom=\Sigma$.
 We then say that $\partial\Omega$ satisfies a uniform interior big pieces of Lip(1,1/2)  graphs condition with
 constants
 $$\epsilon>0,\,\,b\geq 0,\,\,C\geq 1,\,\,c>0,\,\,A>0\,,$$
 if the following holds:  given $ ( \hat X, \hat t )=(\hat{x},\hat{x}_n,\hat{t}) \in \Omega$,
 we can find  a Lip(1,1/2) function $\psi$ with constant $b$, and
 a domain $\tilde\Omega$, %and a constant $C\ge1$
 such that with $d:=d(\hat X,\hat t,\Sigma)$, we have
 \begin{itemize}
 \item[(i)] $Q_{\epsilon d}(\hat{X},\hat{t})\subset
 \tilde\Omega\subset \Omega\cap Q_{Cd}(\hat X,\hat t)$.
 \smallskip
 \item[(ii)] After a
possible rotation in the space variables we have
\[
\tilde\Omega=\{(y,y_n,s): (y,s)\in I_{cd}(x,t),\ \psi(y,s)<y_n< \hat{x}_n+Ad\},
\]
where
$( X, t)=( x, x_{n}, t)$ is some point in
$\Sigma\cap Q_{Cd}(\hat X,\hat t)$  with
\[
\Delta_{d/2} (X,t) \subseteq \Sigma\cap Q_{Cd}(\hat X, \hat t).
\]
 \item[(iii)] $ \H^{n}\left(\pi(\Sigma\cap\partial\tilde\Omega)\cap Q_{cd}( x, t)\right) \geq \epsilon d^{n + 1 }.$
 \end{itemize}
 \end{definition}

  \begin{definition}\label{bigp4}{\bf (Interior Big Pieces of $RPLip$  graphs).}
  Let $\Omega\subset\mathbb R^{n+1}$ be an open set with boundary $\pom=\Sigma$.
   We then say that $\partial\Omega$ satisfies a uniform interior big pieces of regular parabolic Lip(1,1/2)  graphs condition with constants $(\epsilon,b_1,b_2,C,c,A)$ if the following hold. Given $ ( \hat X, \hat t ) \in \Omega$, we can find a domain $\tilde\Omega$ such that
  $(i)-(iii)$ of Definition \ref{bigp3} hold for some regular parabolic Lip(1,1/2) function $ \psi $ with constants $(b_1,b_2)$ and for some constant $b=b(b_1,b_2)$.
 \end{definition}

\section{Statement of main results}\label{sec2}
We first prove the following two theorems concerning additional weak geometrical assumptions beyond  parabolic ADR  which  imply that $\Sigma$ satisfies the weak time-synchronized two cube condition. We consider these theorems  elementary but important.
\begin{theorem} \label{tftbtscorkscrews}
Let $\Sigma$ be a closed subset of $\R^ {n+1}$ which is parabolic ADR with constant $M$ and assume that $\Sigma$ satisfies a corkscrew condition in the sense of Definition \ref{corkscrews} with constant $\gamma_0$.
Assume, in addition, that $\Sigma$ is either time-forwards ADR with constant $M'$ or time-backwards ADR with constant $M'$.  Then $\Sigma$ satisfies the weak time-synchronized two cube condition in the sense of Definition \ref{tsynk} with $\gamma_1=\gamma_1(n,M,\gamma_0,M')$.  Furthermore,  given $(X,t)\in \Sigma$, $T_0<t<T_1$ and $0<r<\diam\Sigma$, and if $\Sigma$ is time-forwards ADR or time-backwards ADR, then the two synchronized cubes in the weak time-synchronized two cube condition can be constructed to be contained in $Q_r^{+}(X,t)$ and $Q_r^{-}(X,t)$, respectively.
\end{theorem}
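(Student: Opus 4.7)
The argument is symmetric under time reflection, so assume $\Sigma$ is time-forwards ADR; the TBADR case follows via $(X,t)\mapsto(X,-t)$. Fix $(X,t)\in\Sigma$ with $T_0<t<T_1$ and $0<r<\diam\Sigma$. The goal is to produce two cubes of the same radius, sharing a common time center, contained in opposite components of $\R^{n+1}\setminus\Sigma$ and both lying in $Q_r^+(X,t)$.

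The plan is to apply the two-cube condition (Definition \ref{corkscrews}) at many points of $\Sigma$ above $(X,t)$, and then use pigeonhole on the temporal coordinates of the resulting corkscrew cubes to find a pair of cubes in opposite components whose time centers are within a small window, after which a shift-and-shrink maneuver in time synchronizes them exactly. Concretely, for a small parameter $\eta>0$ to be chosen, TFADR together with the upper ADR bound yield
\[
\sigma\bigl(\Delta^+(X,t,r/4)\cap\{s>t+\eta^2 r^2\}\bigr)\geq c(M,M')\, r^{n+1},
\]
and a Vitali-type covering produces a disjoint family $\{\Delta(Y_i,s_i,\eta r/10)\}_{i=1}^N$ of surface cubes with centers $(Y_i,s_i)$ in this set and $N\gtrsim\eta^{-(n+1)}$. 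At each $(Y_i,s_i)$, the two-cube condition at scale $\eta r/10$ yields cubes $Q_{\rho_i}(X_i^+,t_i^+)$ and $Q_{\rho_i}(X_i^-,t_i^-)$ in opposite components of $\R^{n+1}\setminus\Sigma$, with $\rho_i\in[\gamma_0\eta r/10,\,\eta r/10)$; the condition $s_i>t+\eta^2 r^2$ together with $\rho_i<\eta r/10$ ensures both cubes lie in $Q_r^+(X,t)$.

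The core combinatorial step is to find indices $i,j$ (possibly equal) with $|t_i^+-t_j^-|<2\min(\rho_i,\rho_j)^2$. For such a pair, setting $t^*=(t_i^++t_j^-)/2$ and $\rho'=\bigl(\min(\rho_i,\rho_j)^2-|t_i^+-t_j^-|/2\bigr)^{1/2}$, one verifies that $Q_{\rho'}(X_i^+,t^*)\subset Q_{\rho_i}(X_i^+,t_i^+)$ and $Q_{\rho'}(X_j^-,t^*)\subset Q_{\rho_j}(X_j^-,t_j^-)$, producing two synchronized cubes of radius $\rho'\gtrsim\gamma_0\eta r$ in opposite components still inside $Q_r^+(X,t)$, which gives $\gamma_1\gtrsim\gamma_0\eta$. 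To find such $i,j$, suppose the contrary and derive a contradiction from three ingredients: the lower bound $N\gtrsim\eta^{-(n+1)}$ from TFADR and Vitali; a pigeonhole on the time axis partitioned at scale $\delta\sim\gamma_0^2\eta^2 r^2$; and, crucially, the upper ADR bound, which controls the $\sigma$-mass of thin time slabs and prevents the centers $(Y_i,s_i)$, and hence the corkscrew times $t_i^\pm$, from clustering in a short time window.

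The main obstacle is this combinatorial step: a direct pigeonhole on temporal coordinates alone does not force a collision, since the within-pair spread $|t_i^+-t_i^-|$ can be comparable to $\eta^2 r^2$, exceeding the shift tolerance $\sim\gamma_0^2\eta^2 r^2$. The upper ADR bound quantitatively spreads the times $\{t_i^\pm\}$ throughout $[t,t+r^2]$, and combined with the Vitali count and a choice of $\eta=\eta(n,M,M',\gamma_0)$ small enough, the pigeonhole becomes tight enough to produce the required near-collision, yielding $\gamma_1=\gamma_1(n,M,\gamma_0,M')$ as claimed, with the cubes lying in $Q_r^+(X,t)$ by construction.
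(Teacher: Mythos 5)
Your reduction to the time-forwards case, the use of TFADR plus the upper ADR bound to produce $N\gtrsim\eta^{-(n+1)}$ well-separated surface cubes, and the shift-and-shrink maneuver are all fine, but the step you yourself flag as the crux --- forcing a near-collision $|t_i^+-t_j^-|<2\min(\rho_i,\rho_j)^2$ between cubes in opposite components --- does not go through, and the obstruction is structural rather than a matter of tuning constants. All of your corkscrew cubes are produced at the single scale $\eta r/10$, so the hypothesis (Definition \ref{corkscrews}) only constrains each time center $t_i^{\pm}$ to lie within $(\eta r/10)^2$ of $s_i$: a freedom of order $\eta^2r^2$, whereas the synchronization tolerance can be as small as $c\,\gamma_0^2\eta^2r^2$ when $\rho_i=\gamma_0\eta r/10$. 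The ratio between tolerance and freedom is $\approx\gamma_0^2$, independent of $\eta$, so shrinking $\eta$ never closes it. Concretely, take $\Sigma=\{x_n=0\}$ (p-ADR, TSADR, two-sided corkscrews): a legitimate selection of corkscrew cubes may place every upper-half-space time center inside a fixed union $A$ of intervals of length $(\eta r/10)^2/4$ and every lower one inside an interleaved union $B$ with $\dist(A,B)\geq(\eta r/10)^2/4\gg 2\gamma_0^2(\eta r/10)^2$ (each admissible time window has length $\sim(\eta r/10)^2$ and meets full intervals of both $A$ and $B$), and then no admissible pair $(i,j)$ exists. Your appeal to the upper ADR bound only limits how many centers $(Y_i,s_i)$ can crowd a thin time slab; it provides no lower-bound equidistribution of the times $t_i^{\pm}$ at the fine scale $\gamma_0^2\eta^2r^2$, and even perfect equidistribution would not force a collision between the two families. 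A secondary gap: $\R^{n+1}\setminus\Sigma$ may have many components and the corkscrew condition gives no globally consistent $\pm$ labelling, so for $i\neq j$ a time collision between $t_i^+$ and $t_j^-$ does not by itself yield two cubes in \emph{different} components, which is what Definition \ref{tsynk} demands.

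The missing idea is scale separation, and this is how the paper's proof proceeds. One applies the corkscrew condition once, at the large scale $r/C_1$, obtaining $Q_1,Q_2$ of side $\gamma_0r/C_1$ in different components; if their time centers differ by at most $(\gamma_0r/(2C_1))^2$, one synchronizes inside these two cubes directly. Otherwise the segment joining the centers meets $\Sigma$ at some $(Z_1,\tau_1)$, and the TBADR hypothesis, through Lemma \ref{GHlemma}, is used to march along $\Sigma$ in temporal steps of size $\approx(a\gamma_0r/C_2)^2$ with $C_2\gg C_1$, until one reaches a boundary point whose time coordinate is within $(\gamma_0r/C_2)^2$ of $s_1$; applying the corkscrew condition \emph{there}, at the much smaller scale $\gamma_0r/(2C_2)$, produces a small cube in a component different from the one containing $Q_1$ whose time center is within $O((\gamma_0 r/C_2)^2)$ of $s_1$, i.e.\ far below $(\gamma_0r/C_1)^2$, so that a subcube of the \emph{large} cube $Q_1$ can be synchronized with it exactly; choosing $C_1,C_2$ appropriately keeps everything inside $Q_r(X,t)$. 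Thus the TF/TB ADR hypothesis is used to travel in time along the boundary toward a prescribed time level, not to count points via a Vitali covering; without some analogue of this second, much finer, scale your single-scale pigeonhole cannot be repaired.
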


\begin{theorem} \label{urtscorkscrews}
Let $\Sigma$ be a closed subset of $\R^ {n+1}$ which is parabolic ADR with constant $M$ and assume that $\Sigma$ satisfies a corkscrew condition in the sense of Definition \ref{corkscrews} with constant $\gamma_0$.
Assume, in addition, that  $\Sigma$ is parabolic UR in the sense of
Definition \ref{def1.UR-}  with UR constants $(M,\tilde M)$.  Then $\Sigma$ satisfies the weak time-synchronized two cube condition in the sense of Definition \ref{tsynk} with $\gamma_1=\gamma_1(n,M,\gamma_0,\tilde M)$.
\end{theorem}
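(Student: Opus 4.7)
The plan is to combine the Carleson-measure bound on $\beta^2$ coming from parabolic uniform rectifiability with the 2-sided corkscrew condition in order to locate, inside $Q_r(X,t)$, a sub-cube in which $\Sigma$ is trapped in a thin slab around a vertical plane $P$; the two corkscrew cubes produced at that sub-scale will then be forced to lie strictly on opposite sides of $P$, and because $P$ is parallel to the $t$-axis, time-translating each corkscrew will yield the desired pair of synchronized cubes.

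First, for a small parameter $\varepsilon>0$ to be chosen in terms of $\gamma_0,M,n$, I would use the Carleson bound $\|\nu\|\le\tilde M$ together with the lower ADR estimate on $\sigma(\Sigma\cap Q_{r/4}(X,t))$ and Chebyshev's inequality to produce, via a pigeonhole argument on the logarithmic $\rho$-integral, a point $(X_0,t_0)\in\Sigma\cap Q_{r/4}(X,t)$ and a radius $r_0\ge c(n,M,\tilde M,\varepsilon)\,r$ such that $Q_{r_0}(X_0,t_0)\subset Q_{r/2}(X,t)$ and $\beta(X_0,t_0,r_0)<\varepsilon$. Letting $P$ be an almost-minimizing vertical plane for $\beta(X_0,t_0,r_0)$, a standard covering argument using the upper ADR bound on $\Sigma$ upgrades the $L^2$-type smallness of $\beta$ to the pointwise inclusion
\begin{equation*}
\Sigma\cap Q_{r_0/2}(X_0,t_0)\subset\bigl\{(Y,s)\in\mathbb{R}^{n+1}:d_p(Y,s,P)<\varepsilon' r_0\bigr\},
\end{equation*}
with $\varepsilon'=\varepsilon'(\varepsilon,M,n)\to 0$ as $\varepsilon\to 0$. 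After a rotation I may take $P=\{x_n=0\}$ and set $H_\pm:=\{\pm x_n>\varepsilon' r_0\}$.

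Next I would invoke the 2-sided corkscrew condition at $(X_0,t_0)$ and scale $r_0/100$, obtaining cubes $Q_\rho(X_1,t_1), Q_\rho(X_2,t_2)\subset Q_{r_0/100}(X_0,t_0)$ with $\rho\ge\gamma_0 r_0/100$, lying in distinct components of $\mathbb{R}^{n+1}\setminus\Sigma$. Choosing $\varepsilon$ so small that $\varepsilon' r_0\ll\rho$, the decisive topological observation is that each of $H_+\cap Q_{r_0/2}(X_0,t_0)$ and $H_-\cap Q_{r_0/2}(X_0,t_0)$ is open, path-connected, and disjoint from $\Sigma$; hence any two points lying in the same one of these half-slabs must be in the same component of $\mathbb{R}^{n+1}\setminus\Sigma$. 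A short case analysis on whether each corkscrew cube is contained in $H_+$, contained in $H_-$, or straddles $P$ then rules out every possibility except the one in which one cube lies entirely in $H_+$ and the other entirely in $H_-$.

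To synchronize, I invoke the fact that $P$ contains a line parallel to the $t$-axis, so translation in time preserves $H_\pm$ and the parabolic distance to $P$. Setting $t^*:=(t_1+t_2)/2$, the translated cubes $Q_\rho(X_j,t^*)$, $j=1,2$, remain in $H_\pm\cap Q_{r_0/4}(X_0,t_0)\subset\mathbb{R}^{n+1}\setminus\Sigma$, and the ``tube'' $\bigcup_{s\in[0,1]}Q_\rho(X_j,(1-s)t_j+s t^*)$ connecting each original cube to its translate also lies in $H_\pm\cap Q_{r_0/2}(X_0,t_0)\subset\mathbb{R}^{n+1}\setminus\Sigma$. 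Thus the translated cubes remain in the same respective components as the originals, share the time coordinate $t^*$, sit inside $Q_r(X,t)$, and have side length $\rho\ge\gamma_1 r$ for some $\gamma_1=\gamma_1(n,M,\gamma_0,\tilde M)$. The principal technical obstacle is the quantitative first step, namely producing a scale $r_0$ of order $r$ at which $\beta$ drops below the threshold dictated by $\gamma_0,M,n$; this is handled by a standard application of the Carleson packing in Definition~\ref{def1.UR} to the small-$\beta$ good set.
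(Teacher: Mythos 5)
Your overall strategy is essentially the paper's: use the Carleson bound on $\beta^2$ together with ADR to locate, at a scale $r_0\gtrsim r$, a sub-cube in which $\Sigma$ is trapped in a thin slab about a plane $P$ parallel to the $t$-axis (the paper does this via a dyadic packing argument for the cubes with $\beta_\infty(\qc)\geq\varepsilon$, using the comparison $\beta_\infty^{\,n+3}\lesssim\beta^2$; your Chebyshev/pigeonhole step plus the covering argument upgrading $L^2$-smallness to $L^\infty$-smallness is the same mechanism), then use the 2-sided corkscrew cubes at that scale, which are too large to fit inside the slab, to separate the two sides of $P$, and finally place synchronized cubes on opposite sides. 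Up to the separation step your argument is sound and coincides with the paper's proof.

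The endgame, however, does not hold as written. Your trichotomy ``contained in $H_+$ / contained in $H_-$ / straddles $P$'' is not exhaustive, and no case analysis can force a corkscrew cube to lie \emph{entirely} in one open half-slab: the cube is only known to be disjoint from $\Sigma$, while the slab $\{|x_n|\le \varepsilon' r_0\}$ contains many points of $\R^{n+1}\setminus\Sigma$, so a corkscrew cube may penetrate the slab (or even cross $P$) without meeting $\Sigma$. Consequently the time-translation/tube argument is unjustified: the portion of such a cube lying inside the slab may, after translation in $t$, meet $\Sigma$, since $\Sigma$ occupies the slab and is not time-invariant. What the corrected case analysis does yield --- each corkscrew cube meets $H_+\cup H_-$, and the two cubes cannot both be connected to the same half-slab --- is that $H_+\cap Q_{r_0/2}(X_0,t_0)$ and $H_-\cap Q_{r_0/2}(X_0,t_0)$ lie in distinct components of $\R^{n+1}\setminus\Sigma$, and this is all you need: instead of translating the corkscrew cubes, take two fresh cubes at the \emph{same} time coordinate placed well inside the two half-slabs, for instance $Q_{r_0/8}\bigl(X_0\pm \tfrac{r_0}{4}\,e_n,\, t_0\bigr)$ with $e_n$ normal to $P$ (recall $\dist(X_0,P)\le\varepsilon' r_0$), exactly as the paper does with its cubes $Q_{r_1}(0,\pm 3r_1,0)$. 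With that replacement your proof is complete and matches the paper's.
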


We are able to prove the following parabolic counterpart of the result in \cite{AHMNT}.

\begin{theorem}\label{ParaAHMNT.thrm}
Let $\Om$ be a one-sided parabolic chord arc domain with constants $(M,\gamma_0,\kappa,C^\ast)$. If, in addition,
$\Sigma$ is parabolic uniformly rectifiable  with constants $(M,\tilde M)$, then $\Om$ is a parabolic chord arc domain with constants $(M,\hat\gamma_0,\kappa,C^\ast)$, where $\hat\gamma_0=\hat\gamma_0(n,M,\tilde M,\gamma_0,\kappa,C^\ast)$.
\end{theorem}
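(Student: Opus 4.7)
The plan is to upgrade $\Om$ from a one-sided to a two-sided parabolic chord arc domain by constructing exterior corkscrews; the parabolic ADR and Harnack chain properties are inherited from the hypothesis, so no further work on them is needed. The strategy follows the elliptic blueprint of \cite{AHMNT}: pass from parabolic UR to the parabolic bilateral weak geometric lemma (p-BWGL), which is supplied by \cite[Theorems~4.15(iii) and 4.16]{BHHLN2} (together with \cite{BHHLN1}), and then use this bilateral flatness together with the Harnack chain condition to derive a contradiction when an exterior corkscrew is absent.

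Fix $(X_0,t_0)\in\Sigma$, $0<r<\diam\Sigma$, and a small parameter $\epsilon>0$ to be chosen at the end. A standard Carleson pigeonhole applied to p-BWGL produces a center $(X_*,t_*)\in\Delta(X_0,t_0,r/4)$, a scale $\rho\in[\eta r,r/4]$ with $\eta=\eta(\epsilon,C^*)>0$, and a time-invariant plane $P=L\times\R$, $L\subset\R^n$ an $(n-1)$-plane, such that
\[
\Sigma\cap Q_{C_0\rho}(X_*,t_*)\subset\{(Y,s):d_p(Y,s,P)\leq\epsilon\rho\}, \qquad P\cap Q_{C_0\rho}(X_*,t_*)\subset\{(Y,s):d_p(Y,s,\Sigma)\leq\epsilon\rho\},
\]
where $C_0=C_0(C^*)$ is large enough to contain the Harnack chain considered below. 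The plane $P$ splits $Q_\rho(X_*,t_*)$ into open half-cubes $A$ and $B$. Using the interior corkscrew condition I pick $(Y_1,s_1)\in A\cap\Om$ with $d_p((Y_1,s_1),\Sigma)\sim\gamma_0\rho$, and I pick a test point $(Y_2,s_2)\in B$ at parabolic distance $\sim\rho$ from $P$ (hence also from $\Sigma$, by flatness) with $s_2=s_1+\rho^2/2$; this arrangement makes $(s_2-s_1)^{1/2}\geq\kappa^{-1}d_p((Y_1,s_1),(Y_2,s_2))$ since $\kappa>100$. Assume, for contradiction, that $Q_r(X_0,t_0)$ contains no exterior corkscrew with constant $\hat{\gamma}_0:=\eta\gamma_0/C_1$ for a large $C_1$. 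Then the open cube $Q_{\hat{\gamma}_0 r}(Y_2,s_2)$ is connected, is disjoint from $\Sigma$ (for $\epsilon$ small), and must meet $\overline{\Om}$; hence it lies entirely in $\Om$, giving $(Y_2,s_2)\in\Om$.

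The Harnack chain condition now yields a chain $\{Q_i\}_{i=1}^\ell\subset\Om$ joining $(Y_1,s_1)$ and $(Y_2,s_2)$. Property~(v) of Definition~\ref{Chord1}, combined with $d_p((Y_j,s_j),\Sigma)\gtrsim\gamma_0\rho$ and $d_p((Y_1,s_1),(Y_2,s_2))\lesssim\rho$, gives $\ell\leq C_2(\gamma_0,C^*)$. Since $(Y_1,s_1)\in A$ and $(Y_2,s_2)\in B$ lie on opposite sides of the time-invariant plane $P$, the connected set $\bigcup_i Q_i$ must intersect $P$, and a simple reach estimate using property~(iii) keeps the whole chain inside $Q_{C_0\rho}(X_*,t_*)$ provided $C_0$ is chosen large in terms of $C^*$ and $C_2$. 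Hence some intermediate cube $Q_{i_0}$ meets $P$ within the flat region, so bilateral flatness gives $d(Q_{i_0},\Sigma)\leq\epsilon\rho$, and property~(iii) yields $\diam Q_{i_0}\leq C^*\epsilon\rho$. Since consecutive chain cubes overlap and each has diameter comparable to its distance from $\Sigma$, the ratio $\diam Q_{i+1}/\diam Q_i$ is bounded above and below by a constant $C_3=C_3(C^*)$. Transitioning from $\diam Q_1\sim\gamma_0\rho$ down to $\diam Q_{i_0}\leq C^*\epsilon\rho$ therefore requires $\ell\gtrsim\log_{C_3}(\gamma_0/(C^*\epsilon))$ steps; choosing $\epsilon$ sufficiently small, depending only on $\gamma_0, C^*, C_2$, contradicts $\ell\leq C_2$. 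This yields the claimed exterior corkscrew with $\hat{\gamma}_0=\hat{\gamma}_0(n,M,\tilde{M},\gamma_0,\kappa,C^*)$.

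The main obstacle will be the quantitative parabolic bookkeeping: verifying that the Carleson pigeonholing in p-BWGL can be tuned to give bilateral $\epsilon$-flatness throughout the enlarged cube $Q_{C_0\rho}(X_*,t_*)$ rather than only at the single scale $\rho$, and controlling the anisotropic rate at which consecutive Harnack-chain cube diameters may grow or shrink under the overlap-plus-(iii) condition. Once these items are in place, the remainder of the argument is a faithful parabolic adaptation of \cite{AHMNT}, with the p-BWGL of \cite{BHHLN1,BHHLN2} playing the role that Jones-type bilateral flatness plays in the elliptic theory.
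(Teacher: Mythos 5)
Your proposal is correct and takes essentially the same route as the paper's proof: invoke the parabolic bilateral weak geometric lemma via \cite[Theorems 4.15(iii) and 4.16]{BHHLN2} (with \cite{BHHLN1}), pigeonhole to find a bilaterally $\epsilon$-flat cube at a comparable scale, place the interior corkscrew point and a time-offset candidate exterior cube on opposite sides of the approximating time-independent plane, and derive a contradiction from a uniformly short Harnack chain that must cross the plane where bilateral flatness forces proximity to $\Sigma$. The quantitative ``obstacle'' you flag (flatness throughout the enlarged cube $Q_{C_0\rho}$) is resolved exactly as in the paper, by using that the dilation parameter $k$ in the bilateral $\beta_\infty$ numbers is at one's disposal and may be chosen large relative to the Harnack chain constants.
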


Concerning uniform big pieces of Lip(1,1/2)  graphs we prove the following.

\begin{theorem}\label{th1} Let $\Sigma$ be a closed subset of $\R^ {n+1}$ which is parabolic ADR with constant $M$. Assume that $\Sigma$ satisfies the weak time-synchronized two cube condition in the sense of Definition \ref{tsynk} with $\gamma_1\in (0,1)$.  Then $\Sigma$ contains
uniform big pieces of Lip(1,1/2) graphs with constants $(\epsilon,b)$ depending only $n,M$ and $\gamma_1$.
\end{theorem}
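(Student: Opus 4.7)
The plan is to revisit the stopping-time construction of \cite{NS} and verify that the argument only needs the weak synchronization $t_1=t_2$, not the strong condition $t_1=t=t_2$. Fix a surface ball $\Delta_0 := \Delta(X_0,t_0,R)$ and use the dyadic structure on $\Sigma$ (Definition \ref{def.dyadiccube}) to obtain parabolic dyadic cubes $\mathcal{Q} \subset \Delta_0$ at all scales. For each such $\mathcal{Q}$, apply Definition \ref{tsynk} at its center with $r=\ell(\mathcal{Q})$ to produce two parabolic cubes $Q_\rho(X_1,t_1)$ and $Q_\rho(X_2,t_2)$, $\gamma_1 r\leq \rho<r$, $t_1=t_2$, lying in distinct components of $\R^{n+1}\setminus\Sigma$ and contained in the ambient cube. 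Because $t_1=t_2$, the unit vector $\nu_{\mathcal{Q}} := (X_1-X_2)/|X_1-X_2|$ lies in the spatial hyperplane $\R^n \times\{0\}$, which is the first key point: weak synchronization is exactly what forces the candidate ``normal direction'' to have no $t$-component, matching the geometry of a Lip(1,1/2) graph $x_n = \psi(x,t)$ over $\R^{n-1}\times\R$.

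Next, run the David-Jerison-type stopping time following \cite{DJ,NS}: starting from $\Delta_0$, mark a descendant cube $\mathcal{Q}$ as \emph{stopped} if the associated direction $\nu_{\mathcal{Q}}$ differs from $\nu_{\Delta_0}$ by more than a small angle $\theta_0$ (to be chosen depending on $\gamma_1$ and $M$). A covering and packing argument, using the synchronized two-cube condition at every scale together with parabolic ADR to control how often the direction can oscillate (each time $\nu$ rotates significantly, a $d_p$-segment joining the two corkscrew centers crosses $\Sigma$ and contributes a definite amount of $\sigma$-mass), shows that the union of maximal stopped cubes has surface measure at most $(1-\epsilon_0)\sigma(\Delta_0)$ for some $\epsilon_0=\epsilon_0(n,M,\gamma_1)>0$. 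Thus the good set $G \subset \Delta_0$ of points not covered by any stopped cube satisfies $\sigma(G)\gtrsim R^{n+1}$.

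On $G$, one verifies that, after the rotation aligning $\nu_{\Delta_0}$ with the $x_n$-axis, the projection $\pi$ onto $\R^{n-1}\times\{0\}\times\R$ is injective and its inverse is Lip(1,1/2). The spatial Lipschitz bound $|\psi(x,t)-\psi(y,t)|\leq b|x-y|$ follows directly from the small-angle control of $\nu$ at every scale, exactly as in the elliptic David-Jerison argument. The time regularity $|\psi(x,t)-\psi(x,s)|\leq b|t-s|^{1/2}$ comes from a separate geometric argument: if two graph points $(x,\psi(x,t),t)$ and $(x,\psi(x,s),s)$ had $|\psi(x,t)-\psi(x,s)|\gg |t-s|^{1/2}$ at scale $|t-s|^{1/2}$, then parabolic ADR on the intermediate surface cube, combined with the weak time synchronization at an intermediate dyadic cube, forces $\nu$ to rotate by more than $\theta_0$, contradicting that both endpoints lie in $G$. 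A standard Whitney-type extension then produces a Lip(1,1/2) function $\psi$ on all of $\R^n$ with comparable constant, and \eqref{aa1-} is a direct consequence of the lower bound on $\sigma(G)$ and Remark \ref{r-measures}(iii).

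The main obstacle is to run the quantitative measure estimate in the stopping time without leaning on strong time synchronization; this is exactly where \cite{NS} used $t_1=t=t_2$ to pass information between the center of the ambient cube and the two corkscrew cubes. I expect that the weak version only affects this step by changing absolute constants, because the cubes $Q_\rho(X_i,t_i)$ are in any case contained in the ambient parabolic cube $Q_r(X,t)$, so the common time $t_1=t_2$ is automatically within $r^2$ of $t$ and all geometric quantities enter only through the parabolic metric $d_p$. The remaining work is largely bookkeeping to replace equalities $t_1=t=t_2$ in \cite{NS} by approximate equalities within the parabolic scale, which only inflates the final Lip(1,1/2) constant $b$ by a factor depending on $n$, $M$, and $\gamma_1$.
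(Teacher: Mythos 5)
Your key observation is the right one — weak synchronization $t_1=t_2$ forces the connecting direction to be purely spatial — but the heart of your argument, the measure estimate, rests on a claim that the hypotheses do not support. You propose to assign a direction $\nu_{\mathcal Q}$ to \emph{every} dyadic cube via Definition \ref{tsynk} and to stop when $\nu_{\mathcal Q}$ rotates by more than $\theta_0$, asserting that ``each time $\nu$ rotates significantly, a $d_p$-segment joining the two corkscrew centers crosses $\Sigma$ and contributes a definite amount of $\sigma$-mass,'' so that the stopped cubes carry at most $(1-\epsilon_0)\sigma(\Delta_0)$. Nothing in parabolic ADR plus the weak two-cube condition gives such a Carleson-type packing: the corkscrew cubes (and hence $\nu_{\mathcal Q}$, which is not even canonically defined) may point in essentially arbitrary directions from one cube and scale to the next, and the surface mass ``contributed'' by a crossing is not associated to disjoint, or even boundedly overlapping, pieces of $\Sigma$, so it cannot be summed. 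Quantitative control of the oscillation of approximating directions across scales is precisely the kind of information encoded in uniform rectifiability (the setting of Theorem \ref{th2}), and it is \emph{not} available under the hypotheses of Theorem \ref{th1}. The same vagueness infects your time-regularity step: the claim that $|\psi(x,t)-\psi(x,s)|\gg|t-s|^{1/2}$ would ``force $\nu$ to rotate by more than $\theta_0$ at an intermediate cube'' is asserted, not proved, and there is no mechanism for it.

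The actual proof avoids any multi-scale direction bookkeeping. The two-cube condition is applied \emph{once}, at the top scale: since $t_1=t_2$, the segment joining the two corkscrew centers lies in a time slice, meets $\Sigma$ at a point which becomes the origin after rescaling, and fixes a single (purely spatial) graph direction. The big piece is then the boundary of the union of parabolic cones $\mathbf{X}+\Gamma_h$ of one fixed large aperture $h$ over the set $W'$, so the Lip$(1,1/2)$ bound with constant $h$ is automatic — no separate spatial or temporal regularity argument is needed. All the work goes into Proposition \ref{mainprop}, the bound $\H^n(\pi(S)\setminus W)\le\gamma$, which is proved by the David--Jerison/\cite{NS} machinery: remove a bad set $\B$ via a weak-type maximal function estimate for the pushforward measure $\nu$; cover $\pi(S)\setminus W$ by the sets $\Sigma_j$; count the small subcubes meeting $\Sigma_j\setminus\B$ (Lemma \ref{lemmaep}); and, crucially, attach to each such subcube a ``shadow surface'' $S(J)$ built by applying the weak time-synchronized corkscrew condition at a point of $S$ — this is the only place below top scale where the two-cube condition enters, and the only delicate point caused by weak (rather than strong) synchronization is that the time coordinate of the corkscrew center need not match that of the base point, which the paper checks is harmless. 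Disjointness of the shadows across generations (Lemma \ref{disjointlemma}) then allows the sum to be controlled by ADR and the fact that parabolic Hausdorff measure does not increase under projection. Without an argument of this type (or a genuine substitute for the packing estimate you assert), your proposal does not yield the theorem.
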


\begin{corollary}\label{cor1} Let $\Sigma$ be a closed subset of $\R^ {n+1}$ which is
parabolic ADR with constant $M$,
and let $\Omega\subset\mathbb R^{n+1}$ be an open set
with boundary $\pom=\Sigma$.
 Assume that $\partial\Omega$ satisfies a corkscrew condition in the sense of Definition \ref{corkscrews+} with
 constant $\gamma_0$ and that  $\partial\Omega$ is time-symmetric ADR in the sense of Definition \ref{def.TBTFTSADR}
with constant $M'$. Then $\partial\Omega$ satisfies a uniform interior
big pieces of Lip(1,1/2)  graphs condition with constants $(\epsilon,b,C,c,A)$ depending only on $n,M,\gamma_0$ and $M'$.
\end{corollary}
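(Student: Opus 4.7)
The plan is to derive Corollary~\ref{cor1} in two stages: first produce uniform big pieces of Lip$(1,1/2)$ graphs on $\Sigma=\partial\Omega$ by combining the hypothesis-reduction result Theorem~\ref{tftbtscorkscrews} with the main constructive Theorem~\ref{th1}, and then bootstrap this boundary-level statement to the interior form demanded by Definition~\ref{bigp3}. Since the two-sided corkscrew condition for $\Omega$ of Definition~\ref{corkscrews+} immediately implies the corkscrew condition on $\Sigma$ of Definition~\ref{corkscrews}, and $\Sigma$ is both parabolic ADR and time-symmetric ADR, Theorem~\ref{tftbtscorkscrews} produces $\gamma_1=\gamma_1(n,M,\gamma_0,M')$ for which $\Sigma$ satisfies the weak time-synchronized two cube condition of Definition~\ref{tsynk}. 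Theorem~\ref{th1} then supplies uniform big pieces of Lip$(1,1/2)$ graphs on $\Sigma$ with constants $(\epsilon_0,b)$ depending only on $n,M,\gamma_0,M'$.

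For the second stage, fix $(\hat X,\hat t)\in\Omega$ and set $d:=d_p((\hat X,\hat t),\Sigma)$. Choose $(X_0,t_0)\in\Sigma\cap\overline{Q_{2d}(\hat X,\hat t)}$ and apply the first stage at $(X_0,t_0)$ at a scale $R$ comparable to $d$. This yields, after a rotation of the space variables, a Lip$(1,1/2)$ function $\psi:\R^{n-1}\times\R\to\R$ of constant $b$ whose graph $\Sigma_\psi$ meets $\Sigma\cap\Delta(X_0,t_0,R)$ in a set $G$ with $\H^n(\pi(G))\ge\epsilon_0 R^{n+1}$. The rotation is governed by the proof of Theorem~\ref{th1}: it aligns $e_n$ with the axis separating the two corkscrew cubes of Definition~\ref{corkscrews+}, and selecting at $(X_0,t_0)$ the interior cube on the side of $(\hat X,\hat t)$ further aligns $e_n$ with the spatial component of $(\hat X,\hat t)-(X_0,t_0)$ up to controlled error. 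Together with the Lipschitz bound on $\psi$, this yields $\hat x_n\ge\psi(\hat x,\hat t)+c_0 d$ for a definite $c_0>0$. Using parabolic ADR, choose $(x,t)\in\Sigma\cap Q_{Cd}(\hat X,\hat t)$ with $\Delta_{d/2}(x,t)\subset\Sigma\cap Q_{Cd}(\hat X,\hat t)$, and define
\[
\tilde\Omega:=\bigl\{(y,y_n,s):(y,s)\in I_{cd}(x,t),\ \psi(y,s)<y_n<\hat x_n+Ad\bigr\},
\]
with $A$ large and $c$ small enough that $Q_{\epsilon d}(\hat X,\hat t)\subset\tilde\Omega$. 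Conditions $(i)$ and $(ii)$ of Definition~\ref{bigp3} then hold by construction, while $(iii)$ follows from the big-piece estimate together with the fact that $\H^n$ does not increase under the orthogonal projection $\pi$.

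The main obstacle is verifying the inclusion $\tilde\Omega\subset\Omega$: outside the big piece $G$, the actual boundary $\Sigma$ need not lie below $\Sigma_\psi$, so a priori the open slab could meet $\Sigma$. I would rule this out by combining the two-sided corkscrew condition with a connectedness argument. The slab $\tilde\Omega$ is connected and contains $(\hat X,\hat t)\in\Omega$, so any component of $\tilde\Omega\setminus\Sigma$ lies entirely in $\Omega$ or in $\ree\setminus\overline\Omega$. If a point of $\Sigma$ lay in $\tilde\Omega$, Definition~\ref{corkscrews+} would produce an exterior corkscrew cube in $\ree\setminus\overline\Omega$ sitting in $\tilde\Omega$ at a scale comparable to its distance to $\partial\tilde\Omega$; one then shows, via a path constructed within the slab that stays strictly above $\psi$ and uses the Lip$(1,1/2)$ control as a barrier, that this exterior cube and $(\hat X,\hat t)$ lie in the same component of $\tilde\Omega\setminus\Sigma$, producing a contradiction. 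A more constructive alternative, in the spirit of \cite{DJ}, is to enlarge $\psi$ to the least Lip$(1,1/2)$ majorant of the upper envelope of $\Sigma\cap Q_{Cd}(\hat X,\hat t)$ before defining $\tilde\Omega$, at the price of a controlled increase in the Lipschitz constant and a harmless loss in $\epsilon$.
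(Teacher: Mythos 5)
Your stage 1 (Theorem \ref{tftbtscorkscrews} plus Theorem \ref{th1}) matches the paper, but stage 2 has two genuine gaps, and they are exactly the points where the paper has to re-run the construction of Theorem \ref{th1} rather than quote it as a black box. First, the claim that you can ``select the interior cube on the side of $(\hat X,\hat t)$'' and thereby align $e_n$ with the spatial component of $(\hat X,\hat t)-(X_0,t_0)$, hence get $\hat x_n\ge \psi(\hat x,\hat t)+c_0d$, is unjustified: the corkscrew/weak-synchronized cubes used inside Theorem \ref{th1} are produced by the chaining argument of Theorem \ref{tftbtscorkscrews} and their location in $Q_r(X_0,t_0)$ is not at your disposal; worse, the nearest boundary point may satisfy $X_0=\hat X$ with $|t_0-\hat t|\approx d^2$, so the ``spatial component of $(\hat X,\hat t)-(X_0,t_0)$'' can vanish and there is nothing to align with. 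This is precisely where the time-symmetric ADR hypothesis must enter the interior argument: the paper uses Lemma \ref{GHlemma} to find a boundary point whose time coordinate is within $(N^{-2}h^{-1}d)^2$ of $\hat t$, applies the corkscrew condition there to get an exterior cube $Q_0$ at time $t^0\approx\hat t$, and then takes as interior cube a small cube $Q_1$ centered at $(\hat X,t^0)$ — essentially at $(\hat X,\hat t)$ itself — so that the projection direction is spatial and $(\hat X,\hat t)$ sits at definite height above the graph (Remark \ref{r5.2}, \eqref{eq5.8}). Your proposal never uses TSADR in stage 2, which is a symptom of this missing step.

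Second, your argument for $\tilde\Omega\subset\Omega$ does not work. If $\Sigma$ met the slab, the exterior corkscrew cube it produces may simply lie in a different connected component of $\tilde\Omega\setminus\Sigma$ than $(\hat X,\hat t)$; there is no contradiction, because off the big piece $\Sigma$ can cross the graph $\Sigma_\psi$ transversally, so a path ``staying strictly above $\psi$'' has no barrier keeping it away from $\Sigma$. In the paper the containment is not a soft topological fact about the graph: it comes from the cone construction inside the proof of Theorem \ref{th1} (Remark \ref{r5.1}, in particular \eqref{eq57}), where the graph domain is a union of cones over points of $W'$ and any point of such a cone can be joined vertically, without meeting $\Sigma$, to the cube $I^{\overline{M}}$ inside the chosen interior corkscrew — using the maximality built into the definitions of $S$ and $W'$. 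That information is lost once you only retain the measure estimate \eqref{aa1-}. Your fallback (replacing $\psi$ by the least Lip$(1,1/2)$ majorant of the upper envelope of $\Sigma\cap Q_{Cd}(\hat X,\hat t)$) also fails: $\Sigma$ may have pieces above $\hat x_n$ inside $Q_{Cd}(\hat X,\hat t)$ (at distance $\ge d$ from $(\hat X,\hat t)$), so the envelope can lie above $\hat x_n$, destroying condition $(i)$ of Definition \ref{bigp3}, and in general the envelope graph need no longer carry a big piece of $\Sigma$, destroying $(iii)$. The correct route is the paper's: feed the custom time-synchronized pair $Q_0,Q_1$ into the proof of Theorem \ref{th1} at $(0,t_0)$ and scale $\approx d$, and read off both the interior containment and the corkscrew inclusion \eqref{eq.tcs} from that construction.
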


Concerning uniform big pieces of regular parabolic Lip(1,1/2)  graphs we prove the following.

\begin{theorem}\label{th2} Let $\Sigma$ be a closed subset of $\R^ {n+1}$ which is parabolic UR with constants $(M,\tilde M)$,
and which satisfies the corkscrew condition in the sense of Definition \ref{corkscrews}
with constant $\gamma_0$. 
Then $\Sigma$ contains
uniform big pieces of RPLip  graphs with constants  $(\epsilon,b_1,b_2)$ depending only on
$n,M,\tilde M$ and $\gamma_0$.
\end{theorem}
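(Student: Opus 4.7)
The plan is to combine the earlier theorems of this paper with a parabolic version of the David--Semmes summation technique. By Theorem~\ref{urtscorkscrews}, the corkscrew hypothesis self-improves, in the presence of p-UR, to the weak time-synchronized two-cube condition of Definition~\ref{tsynk} with $\gamma_1=\gamma_1(n,M,\tilde M,\gamma_0)$. Theorem~\ref{th1} then already produces, at any boundary point $(X,t)\in\Sigma$ and scale $R$, a Lip$(1,1/2)$ function $\psi$ with constant $b=b(n,M,\gamma_1)$ whose graph $\Sigma_\psi$ projects onto a set of $\H^{n}$-measure at least $\epsilon R^{n+1}$ inside $\pi(\Delta(X,t,R))$. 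The entire remaining task is to show that the \emph{same} $\psi$ is in fact $RPLip$, i.e.\ to produce a bound on $\|D_{1/2}^t\psi\|_*$ depending only on $n,M,\tilde M,\gamma_0$.

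To do this I would revisit the Reifenberg-style stopping-time construction of $\psi$ used to prove Theorem~\ref{th1} and record that the graph $\Sigma_\psi$ differs from $\Sigma$ only on a Carleson family of ``bad'' dyadic cubes in the parameter plane $\pc$, and that on the remaining stopping-time regions $\Sigma_\psi$ is a controlled modification of the flat approximations to $\Sigma$ used in that construction. Consequently the parabolic $\beta$-numbers of $\Sigma_\psi$ are pointwise dominated by those of $\Sigma$, plus a Carleson contribution supported on the bad set. Feeding the p-UR estimate \eqref{eq1.sf} with norm $\tilde M$ into this comparison yields a Carleson measure bound, with norm $C(n,M,\tilde M,\gamma_0)$, for the square function on $\pc$ that (by the computations in \cite{HLN1}) controls $D_{1/2}^t\psi$. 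In the regime where this Carleson norm is small \cite{HLN1} then delivers directly $\|D_{1/2}^t\psi\|_*\leq b_2$, and one is done.

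The essential difficulty, and the reason the conclusion does not follow from \cite{NS} or \cite{HLN1} alone, is that here the Carleson norm is of size $\tilde M$, with no smallness available. The plan to remove this smallness is to adapt the summation/corona scheme of \cite{DS} to the parabolic setting: decompose the dyadic cubes of $\pc$ (equivalently of $\Sigma$) into a tree of coherent stopping-time regions on which the local $\beta^2$-Carleson mass lies below a fixed threshold $\eta=\eta(n,M)$ chosen to match the perturbation constants of \cite{HLN1}, so that on each such region the small-norm argument yields a local BMO contribution to $D_{1/2}^t\psi$ with universal constant, and then sum these contributions using the Carleson packing property of the stopping tree, which is a standard consequence of p-UR. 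The main obstacle will be that $D_{1/2}^t$, as seen in the singular integral representation \eqref{1.8}, is nonlocal in time, so passing from local-in-each-region BMO estimates to a global BMO bound is not automatic; the tails of the $|s-t|^{-3/2}$ kernel crossing between stopping regions must be absorbed via geometrically decaying contributions summed along the ancestor chain in the tree, and carrying this telescoping out cleanly against the standard dyadic BMO oscillation test is the crux of the argument.
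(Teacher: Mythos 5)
Your first two steps (Theorem \ref{urtscorkscrews} to upgrade the corkscrew condition, then Theorem \ref{th1} to get a Lip$(1,1/2)$ graph with a big projection) match the paper. The gap is in the claim that ``the \emph{same} $\psi$ is in fact $RPLip$.'' The graph produced by Theorem \ref{th1} is a David--Jerison type cone envelope (the boundary of $\bigcup_{{\bf X}\in W'}({\bf X}+\Gamma)$), not the output of a Reifenberg/corona stopping-time construction, so there is no ``Carleson family of bad cubes'' built into it and no mechanism transferring the $t$-regularity of $\Sigma$ to the cone pieces between contact points: the contact set $W'$ has large measure, but its complement in the parameter plane need not satisfy any packing condition, and the asserted pointwise domination of $\beta_{\psi}$ by $\beta_{\Sigma}$ plus a Carleson error is unjustified. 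The paper does not attempt this; instead it \emph{discards} the sawtooth off a good set: by Chebyshev applied to the square function $f(Z,\tau)=\int_0^{100R}\gamma(Z,\tau,r)\,r^{-1}\,\d r$ one selects $F_1\subset \Sigma_{\psi^\ast}\cap\Delta(X,t,R)$ with $f\leq A^{n+1}\|\nu\|$ on $F_1$ and $\H^n(\pi(F_1))\geq \tfrac\epsilon2 R^{n+1}$ (see \eqref{2.13}--\eqref{2.14}), and then builds a \emph{new} function $\psi$ by a Whitney extension off $\pi(F_1)$, as in \eqref{2.17}, whose values on each Whitney cube are sampled at nearby points of $F_1\subset\Sigma$. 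Regularity is then obtained by transferring the UR Carleson estimate from $\Sigma$ to $\Sigma_\psi$, i.e.\ proving $\|\nu_\psi\|\lesssim 1+\|\nu\|$ as in \eqref{1.7aa+}, after which the argument of \cite[pp.\ 368--373]{HLN1} gives $\|D^t_{1/2}\psi\|_*\lesssim 1+\|\nu\|$ \emph{without any smallness assumption}; the smallness restriction of \cite{HLN1}/\cite{NS} is removed at the level of the transfer estimate, by the David--Semmes counting-function summation (the functions $\mathcal{N}_i$ and the bounds \eqref{eq4uu}, \eqref{eq3uu}) which absorbs the problematic term $\sum_i(\rho_i/r)^{n+1}(h_i/r)^2$ into $\beta^2(Z,\tau,Kr)$.

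Your alternative plan for removing smallness --- a corona decomposition into regions of small local $\beta^2$-mass, local applications of the small-constant result of \cite{HLN1}, and summation via packing --- also does not close: as you yourself note, $D^t_{1/2}$ is nonlocal in time, and patching local BMO bounds across stopping regions against the $|s-t|^{-3/2}$ tails is precisely the step you have not carried out; it is not a routine telescoping, and nothing in your outline indicates how coherence of the stopping regions would control the cross-region contributions. So as written the proposal is missing both the key construction (good set $F_1$ plus Whitney re-extension, which is where the graph actually acquires its $t$-regularity from $\Sigma$) and a complete mechanism for handling large Carleson norm.
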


\begin{corollary}\label{cor2} Let $\Sigma$ be a closed subset of $\R^ {n+1}$ which is
parabolic UR with constants $(M,\tilde M)$.
 let $\Omega\subset\mathbb R^{n+1}$ be an open set
with boundary $\pom=\Sigma$.
Assume that $\Omega$ satisfies a corkscrew condition in the sense of Definition \ref{corkscrews+}
with constant $\gamma_0$ and that  $\partial\Omega$ is time-symmetric ADR in the sense of
Definition \ref{def.TBTFTSADR}
with constant $M'$.  Then $\partial\Omega$ satisfies a uniform interior  big pieces of  % regular parabolic Lip(1,1/2)
RPLip graphs condition  with constants $(\epsilon,b_1,b_2,C,c,A)$
depending only on for some $n,M,\tilde M,\gamma_1$ and $M'$.
\end{corollary}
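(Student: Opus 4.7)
The plan is to mirror the proof of Corollary \ref{cor1} verbatim, substituting Theorem \ref{th2} for Theorem \ref{th1} at the one step where the approximating graph is produced. The hypotheses of Corollary \ref{cor2} are strictly stronger than those of Corollary \ref{cor1}: parabolic uniform rectifiability with constants $(M,\tilde M)$ implies parabolic ADR with constant $M$, while the two-sided corkscrew condition on $\Omega$ and the time-symmetric ADR hypothesis on $\partial\Omega$ are identical. Consequently the entire geometric construction of the interior domain $\tilde\Omega$ carries over without change; what is upgraded is only the regularity class of the function $\psi$ whose graph captures the big piece.

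Concretely, given $(\hat X,\hat t)\in\Omega$, set $d:=d_p(\hat X,\hat t,\Sigma)$ and pick a boundary point $(X_0,t_0)\in\Sigma$ with $d_p((\hat X,\hat t),(X_0,t_0))\le 2d$, then choose $R$ comparable to $d$ so that a parabolic cube of radius comparable to $d$ about $(\hat X,\hat t)$ sits inside $Q_R(X_0,t_0)$. The two-sided corkscrew hypothesis on $\Omega$ from Definition \ref{corkscrews+} trivially implies the two-sided corkscrew condition for $\Sigma$ in the sense of Definition \ref{corkscrews}, since $\Omega$ and $\R^{n+1}\setminus\overline{\Omega}$ lie in distinct connected components of $\R^{n+1}\setminus\Sigma$. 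Thus Theorem \ref{th2} applies at $(X_0,t_0)$ and scale $R$, producing, after a suitable spatial rotation, an $RPLip$ function $\psi$ with constants $(b_1,b_2)$ depending only on $n,M,\tilde M,\gamma_0$, for which
$$\H^{n}\bigl(\pi(\Sigma_\psi\cap\Delta(X_0,t_0,R))\bigr)\,\ge\,\epsilon R^{n+1}.$$

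We then invoke the orientation/truncation argument from the proof of Corollary \ref{cor1}: using the two-sided corkscrew condition for $\Omega$ in conjunction with the time-symmetric ADR condition on $\partial\Omega$, one arranges the rotation so that $\hat X$ lies above the graph of $\psi$, selects a point $(X,t)=(x,x_n,t)\in\Sigma\cap Q_{Cd}(\hat X,\hat t)$ with $\Delta_{d/2}(X,t)\subseteq\Sigma\cap Q_{Cd}(\hat X,\hat t)$, and defines
$$\tilde\Omega\,:=\,\bigl\{(y,y_n,s)\,:\,(y,s)\in I_{cd}(x,t),\ \psi(y,s)<y_n<\hat x_n+Ad\bigr\}.$$
Properties (i)--(iii) of Definition \ref{bigp3} then hold with constants depending only on $n,M,\tilde M,\gamma_0,M'$, and the big-pieces bound in (iii) is inherited from the projection estimate of Theorem \ref{th2}. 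Since $\psi$ is in fact $RPLip$ with controlled constants, Definition \ref{bigp4} is satisfied, which is precisely the assertion of Corollary \ref{cor2}.

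The main technical obstacle, inherited from Corollary \ref{cor1}, is the orientation step: establishing that a subset of projection measure at least $\epsilon d^{n+1}$ of the approximating graph faces $\Omega$ from a single consistent vertical side. This is where the time-symmetric ADR hypothesis is essential, since without parabolic control in both time directions one could in principle have sign flips of the local "up" direction within the big piece, shrinking the good set below the required threshold. Once this orientation step is in hand from the proof of Corollary \ref{cor1}, nothing in the construction of the interior domain is sensitive to whether $\psi$ is merely Lip(1,1/2) or the more regular $RPLip$, so the passage from Corollary \ref{cor1} to Corollary \ref{cor2} is precisely the content of Theorem \ref{th2} relative to Theorem \ref{th1}.
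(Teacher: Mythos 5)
Your plan treats Theorem \ref{th2} as a black box and then claims the interior-domain construction of Corollary \ref{cor1} ``carries over without change,'' but this is exactly where the argument breaks. The statement of Theorem \ref{th2} only tells you that the graph $\Sigma_\psi$ of some $RPLip$ function meets $\Sigma$ in a set of large projected measure; it says nothing about how $\Sigma_\psi$, or the region above it, sits relative to $\Omega$. The containment $\tilde\Omega\subset\Omega$ in Definition \ref{bigp3}(i), and the interior corkscrew $Q_{\epsilon d}(\hat X,\hat t)\subset\tilde\Omega$, were obtained in Corollary \ref{cor1} from the specific cone construction behind Theorem \ref{th1} (Remark \ref{r5.1}, in particular \eqref{eq57}, which places the cone region inside the component $\mathcal U$). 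The $RPLip$ function of Theorem \ref{th2} is a \emph{different} function: it agrees with the Lip(1,1/2) function $\psi^*$ only on $\pi(F_1)$ and is defined off $\pi(F_1)$ by the Whitney-type extension \eqref{2.17}, which (with $\mu=0$) can dip \emph{below} $\psi^*$, so the region above its graph may poke out of $\Omega$. Your ``orientation/truncation argument from the proof of Corollary \ref{cor1}'' has no counterpart for this graph, and the role you assign to time-symmetric ADR (preventing ``sign flips of the up direction'') is not what that hypothesis is used for; it is used, via Lemma \ref{GHlemma}, to locate a nearby boundary point whose time coordinate is close to $\hat t$ so that weakly time-synchronized corkscrews can be arranged.

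The paper's proof is structured differently precisely to close this gap: one first runs the proof of Corollary \ref{cor1} to obtain the Lip(1,1/2) function $\psi^*$ and the subdomain $\tilde\Omega_{\psi^*}\subset\Omega$ over $I_*=\pi(Q_*)$, and then \emph{reruns the proof} (not the statement) of Theorem \ref{th2} at the scale $R\approx d/(Nh)$ on $\Delta_*=Q_*\cap\Sigma$, choosing the free parameter $\mu=N^{1/2}$ in the extension \eqref{2.17}. This upward shift forces $\psi^*\leq\psi$ pointwise, so $\tilde\Omega_\psi\subset\tilde\Omega_{\psi^*}\subset\Omega$, which gives (i); and since $b^*=h$ and $\rho_i\lesssim d/(Nh)$, the shift is only of size $CN^{-1/2}d$, so the corkscrew estimate \eqref{eq.tcs} (via \eqref{eq.psiest} and \eqref{eq6.39}) survives for $N$ large. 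Without some device of this kind --- i.e., without entering the proof of Theorem \ref{th2} and controlling the extension relative to $\psi^*$ --- your argument cannot verify Definition \ref{bigp3}(i)--(ii) for the $RPLip$ graph, so the proposal as written has a genuine gap.
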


Naturally Theorems \ref{tftbtscorkscrews}-\ref{ParaAHMNT.thrm} and Theorems \ref{th1}-\ref{th2}, along
with their corollaries, have  applications to the study of parabolic/caloric measure.
Given an open set $\Omega \subset \ree$, and a point $(X,t) \in \Omega$, we let
$\omega(X,t,\cdot)$ denote caloric measure for $\Omega$ with pole at $(X,t)$.
Then in particular, combining Corollary \ref{cor2}
with the results of \cite{GH}, we have the following.  For simplicity, we state the result in the case that
$\diam\Sigma=\infty$, $T_0=-\infty$ and
$T_1=\infty$.  In the case that
$T_0$ or $T_1$ is finite, one may modify the formulation appropriately; see \cite{GH}.
\begin{theorem}\label{NSweak} Let $\Omega \subset \ree$ and $\Sigma = \pom$
be as in Corollary \ref{cor2}.
Then caloric measure is absolutely continuous with respect to $\sigma$,
and satisfies a local weak reverse H\"older condition.  More precisely,
there are constants $C\geq 1$, $\lambda >0$ depending on the constants in Corollary \ref{cor2},
such that, given $(X_0,t_0) \in \Sigma$ and
$r>0$, we have for every $(X,t) \in \Omega \setminus Q_{4r}(X_0,t_0)$ % with $s_0 \geq t_0 + r^2$,
that $\omega(X,t,\cdot) \ll\sigma$ on $\Delta_r(X_0,t_0)=\Sigma \cap Q_r(X_0,t_0)$, with
$d\omega(X,t,\cdot)/d\sigma=:h$ satisfying
\begin{align}\label{weakRH}
\left(\rho^{-n-1}\iint_{\Delta_{\rho}(Y,s)}h^{1+\lambda}d\sigma\right)^{1/(1+\lambda)} &\leq
C\rho^{-n-1}%\left(
\iint_{\Delta_{2\rho}(Y,s)}h\, d\sigma %\right)^{1+\lambda}
\notag\\
&=C\rho^{-n-1}\omega(X,t,\cdot)\left(\Delta_{2\rho}(Y,s)\right),
\end{align}
whenever $(Y,s)\in\Sigma$ and $Q_{2\rho}(Y,s)\subset Q_{r}(X_0,t_0)$, where
$\Delta_{\rho}(Y,s)=Q_{\rho}(Y,s)\cap \Sigma$, and
$\Delta_{2\rho}(Y,s)=Q_{2\rho}(Y,s)\cap \Sigma$.  Equivalently, we obtain solvability of the
Dirichlet problem\footnote{See \cite{GH} for the precise formulation of the $L^p$ Dirichlet problem
(and initial-Dirichlet problem in the case that $T_0$ is finite).}
with $L^p$ (lateral) boundary data, for some $p<\infty$.
\end{theorem}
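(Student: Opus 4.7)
The proof is essentially a two-step argument: first invoke Corollary \ref{cor2} to extract the geometric structure of $\Omega$, and then apply the main results of \cite{GH} which convert this geometric structure into quantitative caloric measure estimates.

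More concretely, my plan is as follows. The hypotheses of Corollary \ref{cor2} are precisely the hypotheses of Theorem \ref{NSweak}, so Corollary \ref{cor2} applies directly: $\partial\Omega$ satisfies a uniform interior big pieces of regular parabolic Lip(1,1/2) graphs condition, with constants depending only on the data. This means that at each interior point $(\hat X,\hat t)\in\Omega$ we obtain, at scale $d=d_p(\hat X,\hat t,\Sigma)$, a subdomain $\widetilde\Omega\subset\Omega\cap Q_{Cd}(\hat X,\hat t)$ which is (up to a rotation of the spatial variables) a truncated RPLip graph domain, and which captures a fixed $\epsilon$-fraction of the projected surface measure of $\Sigma$ at that scale.

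Next, I would appeal to the main theorem of \cite{GH}, which is the parabolic analogue of the Bennewitz--Lewis style deduction used in \cite{BL} in the elliptic setting. Specifically, \cite{GH} proves that if $\Omega$ is an open set whose boundary satisfies a uniform interior big pieces of RPLip graphs condition in the sense of Definition \ref{bigp4}, then caloric measure $\omega=\omega(X,t,\cdot)$ is absolutely continuous with respect to $\sigma$ and its Poisson kernel $h=d\omega/d\sigma$ satisfies the local scale-invariant weak reverse Hölder estimate \eqref{weakRH} for poles $(X,t)$ suitably far from the relevant surface cube. The input needed by \cite{GH} is exactly the conclusion delivered by our Corollary \ref{cor2}; the RPLip (as opposed to merely PLip) regularity is essential here, because only for RPLip graph domains is the corresponding caloric measure well-behaved with respect to parabolic surface measure, and because the comparison of caloric measures across big pieces relies on parabolic singular integral estimates from \cite{H1,HL,LM,LS} that are only available under $RPLip$ regularity.

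Finally, by the standard equivalence between weak reverse Hölder estimates for the Poisson kernel and $L^p$ solvability of the Dirichlet problem for the heat equation (again recorded in \cite{GH}), the estimate \eqref{weakRH} immediately yields solvability of the $L^p$ (lateral) Dirichlet problem for some $p<\infty$, with $p$ and the implicit constants controlled by the structural data.

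The main obstacle, conceptually, is not the argument itself but verifying that the hypotheses of \cite{GH} are cleanly met by what Corollary \ref{cor2} produces — in particular, that the time-symmetric ADR hypothesis is precisely what makes the big pieces construction of Corollary \ref{cor2} compatible with the parabolic Poisson kernel machinery of \cite{GH}, where the time lag in the parabolic Harnack inequality (as emphasized in the discussion following the statement of Theorem 3 in the introduction) requires careful bookkeeping. Since we are allowed to cite Corollary \ref{cor2} and \cite{GH} as black boxes, this bookkeeping has already been done, and the proof of Theorem \ref{NSweak} itself reduces to the brief composition described above.
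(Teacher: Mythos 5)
Your proposal matches the paper's own argument: Theorem \ref{NSweak} is obtained there precisely by combining Corollary \ref{cor2} with the results of \cite{GH}, exactly as you describe (including the equivalence with $L^p$ solvability recorded in \cite{GH}). The only additional point the paper makes is that \cite{GH} is stated for the slice measure $\si^{\bf s}$ of \eqref{sigsdef}, but its arguments carry over with that measure replaced by the measure $\sigma$ of \eqref{sigdef}.
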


We remark that the results in \cite{GH} are stated and proved with underlying measure
$\sigma$ given by our measure $ \si^{\bf s}$ defined as in \eqref{sigsdef}, however, all the arguments in
\cite{GH} carry over with this measure replaced by our $\sigma$ measure defined as in \eqref{sigdef}.

Next, we state another application, in the context of parabolic chord arc domains.
To set the stage, let $\Sigma$ be a closed subset of $\R^ {n+1}$ which is  parabolic ADR with
constant $M$, let $\Omega\subset\mathbb R^{n+1}$ be a connected component of
$\mathbb R^{n+1}\setminus\Sigma$ and assume that $\diam\Sigma=\infty$, $T_0=-\infty$ and $T_1=\infty$.
Using the Wiener criterion in \cite{EG}\footnote{In the initial discussion of the Dirichlet problem in Section 4 in \cite{NS}
the correct assumption is of course that  $\Sigma $ should be parabolic time-backward parabolic ADR, not only parabolic ADR. Indeed, if  $\Sigma $ is parabolic time-backward parabolic ADR and $\Omega\subset\mathbb R^{n+1}$ is a connected component of $\mathbb R^{n+1}\setminus\Sigma$, then the uniform capacity estimate stated in \cite{NS} can be verified and using the Wiener criterion in \cite{EG} one can conclude that if $\diam\Sigma=\infty$, $T_0=-\infty$ and $T_1=\infty$, then any point $(X,t)\in\partial\Omega$ is regular for the bounded continuous Dirichlet problem for the heat equation in $\Omega$.} we can conclude that any point $(X,t)\in\partial\Omega$ is regular for the bounded continuous Dirichlet problem for the heat equation, as well as the adjoint heat equation, in $\Omega$.  Using this and exhausting $\Omega$ by bounded sets, and applying Perron-Wiener-Brelot type arguments, one can conclude that the bounded continuous Dirichlet problems for the heat
equation, as well as the adjoint heat equation, in $ \Om $ always have unique solutions.

Recall that $ \om( \hat X,\hat t, \cdot)$ is the caloric  measure,
 at $(\hat X,\hat t)\in\Omega$, associated to the heat equation in $ \Omega$.
For $(X,t)$,  $r>0$, and $A\geq 100$, we
define
\begin{eqnarray}\label{2.6}
\Gamma^+_A(X,t,r)&=&\{(Y,s): | Y - X|^2 \leq  A ( s - t ),\,\,  s
- t  \geq  5 r^2\}.
\end{eqnarray}

\begin{definition}\label{def-Ainftyball}  Let $(X,t)\in\partial\Omega$, $r>0$,  and consider
$(\hat X,\hat t)\in \Omega\cap\Gamma_A^+(X,t,4r)$.  We say that
$\omega(\cdot)=\omega(\hat X,\hat t,\cdot)$ satisfies a
{\em Reverse H\"older} condition (equivalently the $A_\infty$ condition)
on $\partial\Omega\cap Q_r(X,t)$, with constants $L$ and $\lambda>0$ if the following is true:
$\omega$ is a doubling measure, i.e.,
\[\omega(Q_{2\rho}(\tilde X,\tilde t)) \lesssim \omega(Q_{\rho}(\tilde X,\tilde t))\,,
\]
and  $\d\omega/\d\sigma=h$ exists on $\Delta(X,t,r) $ with
\begin{eqnarray}\label{cc1}\iint_{\Delta(\tilde X,\tilde t,\rho)}h^{1+\lambda}\, \d\sigma\leq L\sigma(Q_\rho(\tilde X,\tilde t))^{-\lambda}(\omega(
Q_{\rho}(\tilde X,\tilde t)))^{1+\lambda}
\end{eqnarray}
whenever $(\tilde X,\tilde t)\in \partial\Omega$, $Q_{2\rho}(\tilde X,\tilde t)\subset Q_r(X,t)$.
\end{definition}

The following theorem is an immediate consequence of the combination
of Theorem \ref{ParaAHMNT.thrm} (which gives the (2-sided)
corkscrew condition), Theorem \ref{urtscorkscrews} (which
gives the weak time-synchronized two cube condition), Corollary \ref{cor2} (
which gives the uniform interior  big pieces of $RPLip$ graphs condition),
the doubling property of parabolic measure (which can be
proved as in \cite{HLN2}), and a familiar argument based on the maximum principle.
We note that the following is a parabolic analogue of the main result of \cite{HM},
although our approach is based on the much more efficient method of \cite{AHMNT}, using big pieces technology.

\begin{theorem}\label{th3+}
Suppose that $\Om$ is a one-sided parabolic
chord arc domain with constants $(M,\gamma_0,\kappa,C^\ast)$, and with boundary $\pom=:\Sigma$.
Assume also
that $\diam\Sigma=\infty$, $T_0=-\infty$ and $T_1=\infty$,and that $\Sigma$ is parabolic uniformly rectifiable
with constants $(M,\tilde M)$.
Let $(X,t)\in\partial\Omega$, $r>0$, $A\geq 100$,  and consider
$(\hat X,\hat t)\in \Omega\cap\Gamma_A^+(X,t,4r)$. Then $ \om ( \hat X ,
\hat t, \cdot ) $ is a doubling measure in the sense that there exists a
constant $c=c(n,M,\tilde M, \gamma_0,\kappa,C^\ast, A) $  such that
\begin{eqnarray}\label{1.6}
 \om ( \hat X, \hat t, \Delta(\tilde X,\tilde t,2\rho)) \leq c\om ( \hat X, \hat t, \Delta(\tilde X,\tilde t,\rho)),
\end{eqnarray}
for all $(\tilde X,\tilde t)\in\partial\Omega$, $Q_{\rho}(\tilde X,\tilde t)\subset Q_{2r}(X,t)$. Furthermore,
$\omega(\hat X,\hat t,\cdot)$ satisfies the Reverse H\"older condition (i.e., the
$A_\infty$ condition) on $\Delta(X,t,r)$ in the sense of Definition \ref{def-Ainftyball} with constants
$L$ and $\lambda>0$ depending only on $(n,M,\tilde M, \gamma_0,\kappa,C^\ast, A)$.
\end{theorem}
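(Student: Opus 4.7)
The plan is to execute precisely the roadmap sketched in the paragraph preceding the statement: assemble Theorem \ref{ParaAHMNT.thrm}, Theorem \ref{urtscorkscrews}, Corollary \ref{cor2}, the doubling property of caloric measure, and a maximum-principle based big-pieces argument. First, since $\Omega$ is a one-sided parabolic chord arc domain and $\Sigma$ is parabolic uniformly rectifiable, Theorem \ref{ParaAHMNT.thrm} upgrades $\Omega$ to a (full) parabolic chord arc domain; in particular the two-sided corkscrew condition of Definition \ref{corkscrews+} now holds for $\partial\Omega$. With this enhanced hypothesis, Theorem \ref{urtscorkscrews} yields the weak time-synchronized two cube condition for $\Sigma$. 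Combining this with the (time-symmetric) ADR property of $\Sigma$ (which is inherited from parabolic ADR plus the now-available two-sided corkscrews, by a direct argument using interior and exterior corkscrew balls in each time half), I would then invoke Corollary \ref{cor2} to produce the uniform interior big pieces of regular parabolic Lip$(1,1/2)$ graphs condition for $\partial\Omega$, with the quantitative constants depending only on $(n, M, \tilde M, \gamma_0, \kappa, C^\ast)$.

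Next I would establish the doubling estimate \eqref{1.6}. The idea is to adapt the classical scheme of \cite{FGS}, as carried out in \cite{HLN2}, to the present parabolic chord arc setting. The needed ingredients are now all in hand: two-sided corkscrews, the Harnack chain condition, parabolic ADR, and the maximum principle; importantly, Reifenberg flatness (used in \cite{HLN2}) is not required because the chord arc structure alone supplies enough connectivity to build Harnack chains between $(\hat X,\hat t)$ and appropriately time-forward interior corkscrew points at the scale of a boundary cube. Standard iteration of the resulting local comparisons then produces \eqref{1.6} for all $(\tilde X,\tilde t)\in\partial\Omega$ with $Q_{\rho}(\tilde X,\tilde t)\subset Q_{2r}(X,t)$.

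For the reverse H\"older estimate \eqref{cc1} I would run a big-pieces extrapolation argument. On each interior big-pieces subdomain $\tilde\Omega\subset\Omega$ produced by Corollary \ref{cor2}, whose boundary meets $\Sigma$ in a set of $\sigma$-measure comparable to $d^{n+1}$, the fact that $\tilde\Omega$ is an RPLip graph domain lets one apply the $A_\infty$ / $L^p$-Dirichlet solvability results of \cite{H1,HL,LM,LS} for its caloric measure $\tilde\omega$ with respect to parabolic surface measure on the graph. The maximum principle provides the pointwise comparison $\omega(\hat X,\hat t,E)\lesssim \tilde\omega(\hat X,\hat t,E)$ for Borel $E\subset \Sigma\cap\partial\tilde\Omega$ once the pole is correctly positioned inside $\tilde\Omega$ (a point in $\Gamma_A^+(X,t,4r)$ can be Harnack-connected through $\tilde\Omega$ to such a pole, using again the chord arc hypotheses). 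This transfers the reverse H\"older bound from the RPLip graph to a uniform fraction of $\Delta_\rho(\tilde X,\tilde t)$; iterating via the standard good-$\lambda$/self-improvement mechanism of \cite{AHMNT}, in the parabolic form now available through \cite{GH}, together with the already-established doubling property, yields \eqref{cc1} and the stated $A_\infty$ conclusion.

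The main obstacle, in my view, lies entirely in the parabolic extrapolation step. The elliptic template from \cite{AHMNT} and \cite{HM} is clear, but one must track carefully the time-forward nature of the cones $\Gamma_A^+$ in \eqref{2.6}, match the pole used for $\tilde\omega$ with the analogous forward cone inside the graph subdomain $\tilde\Omega$, and control the time lag in the parabolic Harnack inequality when comparing caloric measures across scales. Once these parabolic bookkeeping issues are handled in the manner of \cite{GH}, the theorem reduces to a (somewhat intricate but essentially routine) assembly of the pieces listed above.
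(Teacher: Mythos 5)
Your overall route is the same as the paper's: the paper's entire proof of Theorem \ref{th3+} is exactly the assembly you describe, namely Theorem \ref{ParaAHMNT.thrm} (two-sided corkscrews), Theorem \ref{urtscorkscrews} (weak time-synchronization), Corollary \ref{cor2} (interior big pieces of $RPLip$ graphs), doubling of caloric measure as in \cite{FGS}/\cite{HLN2}, and the maximum-principle/big-pieces extrapolation in the form given by \cite{GH}, following \cite{AHMNT}. So there is no divergence of method; the only issues are in how you justify the hypotheses of Corollary \ref{cor2}.

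The concrete problem is your parenthetical claim that the time-symmetric ADR hypothesis of Corollary \ref{cor2} is ``inherited from parabolic ADR plus the now-available two-sided corkscrews, by a direct argument using interior and exterior corkscrew balls in each time half.'' That implication is false. In $\R^2$ (so $n=1$), let $\Sigma$ be the union of the two Lip$(1,1/2)$ curves $\{(x,t): t\le 0,\ x=\pm\sqrt{-t}\}$, together with a far-away time-parallel line $\{x=10^6\}$ added only to arrange $T_1=\infty$. One readily checks that this set is parabolic ADR, satisfies the two-sided corkscrew condition of Definition \ref{corkscrews} at every point and scale (inside-the-wedge versus outside-the-wedge cubes), and is even parabolic UR (the $\beta$-numbers at a point at depth $|t_0|$ are $\lesssim \rho/\sqrt{|t_0|}$ for $\rho\le\sqrt{|t_0|}$ and $O(1)$ above that scale, and these pack into the Carleson condition); yet $\sigma\bigl(\Delta^+(0,0,r)\bigr)=0$ for all $r<10^5$, so time-forward ADR fails at the tip. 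Consistently with Theorem \ref{th3+}, the relevant complementary component fails the Harnack chain condition of Definition \ref{Chord1}, because chains are forced to move forward in time. The upshot is that in the setting of Theorem \ref{th3+} the time-localization which Corollaries \ref{cor1}--\ref{cor2} extract from time-backward ADR via Lemma \ref{GHlemma} (a boundary point whose time coordinate is within $(N^{-2}h^{-1}d)^2$ of $\hat t$) must instead be produced from the chord-arc structure itself --- either by verifying TSADR from the Harnack chain plus interior corkscrew conditions, or by rerunning the proof of Corollary \ref{cor2} with the Harnack chains supplying the near-time-synchronized boundary point; your corkscrew-only argument would fail. (A smaller point: the maximum-principle comparison you state, $\omega\lesssim\tilde\omega$ on $\Sigma\cap\partial\tilde\Omega$, is the wrong direction; the extrapolation in \cite{GH} uses $\tilde\omega\le\omega$ on the contact set together with an estimate of $\omega$ of the non-contact part of $\partial\tilde\Omega$, but since you defer to \cite{GH} for that mechanism this is a matter of bookkeeping rather than of substance.)
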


In the following sections we give the proofs of Theorems \ref{tftbtscorkscrews}-\ref{ParaAHMNT.thrm}, and
Theorems \ref{th1}-\ref{th2} with their corollaries, in the case that $\diam\Sigma=\infty$, $T_0=-\infty$ and
$T_1=\infty$. If $T_0$ or $T_1$ is finite, the proofs are  completely analogous and in this case the difference is that all sets occurring have to be intersected with $\R^n\times(T_0,T_1)$ and the notation will be more cumbersome.

We conclude this section by
 recalling the following elementary lemma from \cite{GH} which we shall use in the sequel.
\begin{lemma}[\cite{GH}] \label{GHlemma}
 Let $\Sigma$ be a closed subset of $\R^ {n+1}$ which is parabolic ADR with constant $M$.   Assume that $\Sigma$ is  time-forward ADR or time-backwards ADR  with constant $M'$. Then there exist constants $a_1 \in(0,1/2)$, $a_2\in(0,1)$, both depending only on $n,M$ and $M'$ such that the following is true. Let
 $(X,t)\in \Sigma$. If $\Sigma$ is  time-forward ADR, then
 $$\sigma(\Delta^+(X,t,r)\cap \{(Y,s): s < t + (a_1r)^2\}) \geq a_2r^{n+1} $$
 and if $\Sigma$ is  time-backwards ADR, then
 $$\sigma(\Delta^-(X,t,r)\cap \{(Y,s): s < t - (a_1r)^2\}) \geq a_2r^{n+1}.$$
\end{lemma}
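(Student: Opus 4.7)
My plan is to treat the two claims independently and then combine them by taking the more restrictive of the two values of $a_1$. The forward case will be a direct consequence of applying TFADR at a smaller scale, while the backward case will require subtracting an ADR upper bound on a thin slab near $t$ from a full-scale TBADR lower bound.

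For the TFADR case, I would exploit the fact that $Q_{a_1 r}(X,t)$ forces $|s-t|<(a_1r)^2$, so that
\[
\Delta^+(X,t,a_1 r)\subseteq \Delta^+(X,t,r)\cap\{(Y,s):s<t+(a_1r)^2\}
\]
holds for any $a_1\in(0,1/2)$. The TFADR lower bound applied at scale $a_1r$ then immediately gives
\[
\sigma\bigl(\Delta^+(X,t,r)\cap\{s<t+(a_1r)^2\}\bigr)\geq (M')^{-1}(a_1 r)^{n+1},
\]
and so any fixed choice, e.g.\ $a_1=1/4$ with $a_2=(M')^{-1}4^{-(n+1)}$, works. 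No further work is needed in this case.

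The backward case is slightly less automatic because we want mass in a slab that is \emph{away} from $t$, not near it. Here I would start from the full-scale TBADR bound $\sigma(\Delta^-(X,t,r))\geq (M')^{-1}r^{n+1}$ and subtract the mass of the complementary near-$t$ slab
\[
S:=\Delta^-(X,t,r)\cap\{(Y,s):s\geq t-(a_1r)^2\}.
\]
Note that $S$ is contained in $\Sigma$ intersected with a parabolic slab of spatial extent $2r$ and time height $(a_1r)^2$. A standard Vitali-type covering argument (taking a maximal disjoint subcollection of cubes $Q_{a_1r/5}(Y,s)$ centered at points of $S$ and enlarging by a factor of $5$) produces at most $C_n\,a_1^{-n}$ parabolic cubes of side $a_1r$, centered on $\Sigma\cap S$, that cover $S$. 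Applying the parabolic ADR upper bound to each covering cube gives
\[
\sigma(S)\leq C_n\,a_1^{-n}\cdot M(a_1 r)^{n+1}=C_{n,M}\,a_1\,r^{n+1}.
\]
Choosing $a_1\in(0,1/2)$ small enough (depending only on $n,M,M'$) so that $C_{n,M}a_1\leq (2M')^{-1}$ would then yield the backward inequality with $a_2=(2M')^{-1}$.

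The only minor subtlety is arranging the covering in Case~2 with cubes centered on $\Sigma$ so that the ADR upper bound applies; this is routine and not a substantive obstacle. Taking the minimum of the two values of $a_1$ (and the corresponding $a_2$) produces a single pair of constants depending only on $n,M,M'$, as required.
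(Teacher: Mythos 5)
Your proposal is correct. Note that the paper offers no internal argument for this lemma: its ``proof'' is simply the citation to \cite{GH}, so there is nothing in the text to compare against, and your self-contained argument is the natural one (and is in the spirit of the proof in \cite{GH}). The substantive half is the backwards case, and your treatment is sound: the TBADR lower bound at scale $r$, minus an upper bound for the thin slab $S=\Delta^-(X,t,r)\cap\{s\geq t-(a_1r)^2\}$ obtained by covering $S$ with at most $C_n a_1^{-n}$ parabolic cubes of side $a_1r$ centered at points of $S\subset\Sigma$ (your Vitali count is right, since the disjoint fifth-size cubes sit in a box of Lebesgue measure $\approx r^n(a_1r)^2$) and applying the ADR upper bound to each, gives $\sigma(S)\leq C_{n,M}\,a_1 r^{n+1}$; taking $a_1$ small depending only on $n,M,M'$ yields the claim with $a_2=(2M')^{-1}$. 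One remark on the forward case: as literally printed, the constraint $s<t+(a_1r)^2$ only asks for mass in the slab adjacent to $t$, and your containment $\Delta^+(X,t,a_1r)\subseteq\Delta^+(X,t,r)\cap\{s<t+(a_1r)^2\}$ does settle it trivially; however, the time-reversed mirror of the backwards statement (which is what a forward analogue of the paper's applications, e.g.\ marching forward in time in the proof of Theorem \ref{tftbtscorkscrews}, would actually require) reads $s>t+(a_1r)^2$, i.e.\ mass a definite parabolic distance into the future. Your containment argument does not give that stronger version, but your slab-subtraction argument transfers verbatim (TFADR lower bound at scale $r$ minus the covering estimate for $\Delta^+(X,t,r)\cap\{s\leq t+(a_1r)^2\}$), so no essential change to your method would be needed.
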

\begin{proof} See \cite{GH}.
\end{proof}

\section{Proof of the geometric theorems}\label{sec3}  In this section we prove Theorem \ref{tftbtscorkscrews}, Theorem \ref{urtscorkscrews}, and Theorem \ref{ParaAHMNT.thrm}; when $\diam\Sigma=\infty$, $T_0=-\infty$ and $T_1=\infty$.

\subsection{Proof of Theorem \ref{tftbtscorkscrews}} Let $\Sigma$ be a closed subset of $\R^ {n+1}$ which is parabolic ADR with constant $M$ and assume that $\Sigma$ satisfies a corkscrew condition in the sense of Definition \ref{corkscrews} with constant $\gamma_0$.
Assume, in addition, that $\Sigma$ is  time-backwards ADR with constant $M'$.
That case in turn implies the time-forward case, by the change of variable $t \to -t$.
Our goal is to prove that $\Sigma$ satisfies the weak time-synchronized two cube condition in the
sense of Definition \ref{tsynk} with $\gamma_1=\gamma_1(n,M,\gamma_0,M')$.

Let $(X,t)\in \Sigma$, $r>0$. We first apply the corkscrew condition at $(X,t)$ and on the scale $r/C_1$ where $C_1$ a large constant to be chosen, to produce two cubes $$Q_1:=Q_{\gamma_0 r/C_1}(Y_1,s_1),\ Q_2 := Q_{\gamma_0 r/C_1} (Y_2,s_2),$$
both contained in $Q_{r/C_1}(X,t)$,
but belonging to different connected components of $\mathbb R^{n+1}\setminus\Sigma$.
If $s_1 =s_2$, then we are done and hence  we can without loss of generality assume that $s_1 < s_2$.
Let $\delta := s_2-s_1$.

Assume that $\delta \leq(\gamma_0r/2C_1)^2$. In this case it follows readily that we can find two cubes $Q_1'$ and $Q_2'$, both of size
 $\gamma_0r/(2C_1)$, $Q_1'\subset Q_1$,  $Q_2'\subset Q_2$,  such that the centers of  $Q_1'$ and $Q_2'$ have the same time coordinate and we are done.

 Assume that $\delta > (\gamma_0r/2C_1)^2$. Using that $Q_1$ and $Q_2$ are contained in
different connected components of $\mathbb{R}^{n+1}\setminus\Sigma$ we see that the line connecting $(Y_1,s_1)$ and $(Y_2,s_2)$
intersects $\Sigma$ at some point $(Z_1,\tau_1)\in \Sigma$.  Set $\delta ' = \tau_1-s_1$.
Our strategy is now to use Lemma \ref{GHlemma} to produce a chain of cubes, starting with a cube centred at $(Z_1,\tau_1)$, such that the terminal cube in the chain has time coordinate very close to $s_1$.  To start the construction of the chain we let
$$\Delta_1:=\Delta(Z_1,\tau_1,\gamma_0r/C_2)$$
where $C_2>C_1$ is yet an other large constant to be chosen.  Applying Lemma \ref{GHlemma} to $\Delta_1$ we can pick
$$(Z_2,\tau_2)\in\Delta_1 \cap \{(X,t) \in \Sigma: t<\tau_1 -a^2\gamma_0^2r^2/(C_2^2)\}$$
Where $a$ is the constant denoted by $a_1$ in the statement of Lemma \ref{GHlemma}.   Also, let
 $$\Delta_2:=\Delta(Z_2,\tau_2,\gamma_0r/C_2).$$
 We can now repeat this argument with $(Z_1,\tau_1)$ replaced by $(Z_2,\tau_2)$ to iteratively produce a sequence of points $(Z_i,\tau_i) \in \Sigma$ and we let $N$ be the first integer such that $|\tau_N - s_1|< (\gamma_0r/C_2)^2$.
 At $(Z_N,\tau_N)$ we apply the corkscrew condition at scale
$\gamma_0r/(2C_2)$ to produce a corkscrew cube $Q_0$ centered at $(Y_0,s_0)$, contained in a component of $\mathbb{R}^{n+1}$ which is different  the component containing $Q_1$,
of parabolic size $\gamma_0^2r/(2C_2)$, and such that $Q_0\subset Q_{\gamma_0r/(2C_2)}(Z_N,\tau_N)$. Then, as in the case  $\delta \leq(\gamma_0r/2C_1)^2$ it follows readily  that $Q_1$ contains a cube of size $\gamma_0^2r/(2C_2)$ with the same time coordinate as $Q_0$.

To complete the proof it only remains to show how to choose $C_1$ and $C_2$ appropriately to ensure that $Q_0$ lies inside $Q_r(X,t)$.  However, it is easy to see that
\begin{align*}
N \frac{a^2\gamma_0^2 r^2}{C_2^2} \leq s_1-s_2 \implies N \leq \frac{C_2^2(s_1-s_2)}{a^2\gamma_0^2r^2} \leq \frac{4C_2^2}{C_1^2\gamma_0^2a^2}.
\end{align*}
This implies that
\begin{align*}
\|Z_1-Z_N\| &\leq \frac{4C_2^2}{C_1^2a^2} \frac{\gamma_0r}{C_2} = \frac{4C_2\gamma_0 r}{C_1^2a^2}\mbox{ and }\|Z_1 - X\| \leq \frac{r}{C_1}.
\end{align*}
Hence, if  we choose $C_1>100$, and $C_2 = \max\{C_1+1,C_1^2a^2/(40\gamma_0)\}$ (here we can make $\gamma_0$ smaller than $a$ if necessary), then $ \|Z_N-X\| \leq r/50$ and consequently
$$\|(X-Y_0,t-s_0)\| \leq r/25. $$
This proves that $Q_0 \subset Q_r(X,t)$.

To see that the corkscrew cube constructed can be constructed as to be contained in $Q_r^-(X,t)$ we first apply  Lemma \ref{GHlemma} and then repeat the same argument above, but
with $(X,t,r)$ replaced by $(X',t',r')$ where  $(X',t') \in \Delta^-(X,t,r/100)$ and where $r'=r'(a_1,r)$ is chosen so that $\Delta(X',t',r') \subseteq \Delta^-(X,t,r/100)$. This completes the proof of  Theorem \ref{tftbtscorkscrews}.
\subsection{Proof of Theorem \ref{urtscorkscrews}} We
 introduce for $ ( Z, \tau) \in \Sigma, r > 0$,
 \begin{eqnarray}\label{2.1}\beta_\infty ( Z, \tau, r ) := \inf_P
\, \sup_{  ( Y,s ) \in \De  ( Z, \tau, r  )}  \frac{
 d ( {Y,s}, P )  }{r},
 \end{eqnarray}
 where  the infimum is  taken over all $ n $-planes $ P $
containing a line parallel to the $ t $ axis. Given $ ( Z, \tau),
r  $ as above, in display (2.2) in \cite{HLN1} it is proved that
 \begin{eqnarray}\label{2.2}
  \beta_\infty(Z,\tau,r)^{n+3} \leq 16^{n+3}\beta^2(Z,\tau,2r).
   \end{eqnarray}
   We also consider the dyadic versions
\begin{equation}\label{3}
\beta_\infty(\qc):=  \inf_P \diam(\qc)^{-1} \sup_{\{(Y,s)\in k\qc\}} \dist(Y,s,P)\,,
\end{equation}
and
\begin{equation}\label{4}
\beta(\qc) = \beta_2(\qc):=  \inf_P \left(\diam(\qc)^{-d-2} \int_{2k\qc} \dist^2(Y,s,P) \,d\sigma(Y)\right)^{1/2}\,,
\end{equation}
where $\qc$ is a dyadic cube as in
Definition \ref{def.dyadiccube},  $k$ is a sufficiently large number to be chosen,
and for $k\geq 1$
we define the ``dilate" $k\qc:= \{(Y,s)\in\Sigma: \,\dist\left(Y,s, \qc\right)\leq k\diam(\qc)\}$.
We then have the dyadic version of \eqref{2.2}, by the same argument:
 \begin{eqnarray}\label{2.2a}
  \beta_\infty(\qc)^{n+3} \leq C \, \beta^2(\qc)\,,
   \end{eqnarray}
   where $C=C(n,ADR)$.

   By definition, since $\Sigma$ is p-UR, we have that
   $\beta^2(X,t,r) d\sigma(X,t) dr/r$ is a Carleson measure on $\Sigma \times (0,\infty)$, which
   readily implies in turn
(in fact is equivalent to) the fact that $\beta(\qc)$ satisfies the dyadic Carleson measure condition
\begin{equation}\label{dyadicbeta}
\sup_\qc \frac1{\sigma(\qc)} \sum_{\qc'\subset \qc} \beta^2(\qc')\,
\sigma(\qc')=:\|\beta\|_{\mathcal{C}}<\infty\,.
\end{equation}
Using \eqref{2.2a}, one may readily verify (basically via Tchebychev's inequality)
that \eqref{dyadicbeta} implies
a Carleson packing condition for  ``non-flat" cubes, as follows:
given $\varepsilon>0$, there is a constant $C_\varepsilon<\infty$ such that
\begin{equation}\label{badpack}
\sup_\qc \frac1{\sigma(\qc)} \sum_{\qc'\subset \qc} \alpha_\varepsilon(\qc') \leq C_\varepsilon\,,
\end{equation}
where
\[
\alpha_\varepsilon(\qc'):=\left\{
\begin{array}{lc}
\sigma(\qc')\,, & \text{if }\, \beta_\infty(\qc')\geq \varepsilon,
\\[6pt]
0\,,&\text{if }\, \beta_\infty(\qc')< \varepsilon\,.
\end{array}
\right.
\]
Consider a cube $Q_r(X,t)$ centered on $\Sigma$.
By the standard properties of the dyadic system, there is a dyadic cube
\[
\qc_0 \subset \Delta(X,t,r/10) =Q_{r/10}(X,t)\cap\Sigma\,,
\]
with $\ell(\qc_0)\approx r$.
Fix $\ep$ suitably small to be chosen, and note that
as a consequence of the packing condition \eqref{badpack}, there is a dyadic
subcube $\qc_1 \subset \qc_0$ with
\[ c_\ep r \,\leq \,  r_1:= \diam(\qc_1) \,<\, r/100\,,\]
such that $\beta_\infty(\qc_1)<\ep$.  Fixing $\Xb^1=(X^1,t^1)=(x^1,x_n^1,t^1)\in \qc_1$,
we see that by the
definition of dyadic $\beta_\infty$, see \eqref{3}, there is a hyperplane $P_1$
parallel to the $t$-axis such that
\begin{equation}\label{sandwich}
\dist(Y,s,P_1)<\eps r_1\,,\quad \forall \,(Y,s)\in  \Delta_1:= Q_{10r_1}(\Xb^1) \cap\Sigma\,.
\end{equation}
provided that $k$ is chosen large enough, depending on the constants in the construction of
the dyadic system in Definition \ref{def.dyadiccube}.
By translation we may
suppose that $\Xb^1=(0,0)$, and by a spatial rotation we may suppose that
$P_1 = \R^{n-1}\times \{0\}\times \R$.
Set $Q_1:= Q_{10r_1}(\Xb^1) = Q_{10r_1}(0,0)$, define
\[Q_1^{up}:= Q_1 \cap \{y_n \geq \ep r_1\}\,,\quad Q_1^{down}:= Q_1 \cap \{y_n \leq - \ep r_1\}
\]
and observe that $Q_1^{up}\cap\Sigma =\emptyset = Q_1^{down}\cap\Sigma$, by \eqref{sandwich}.
By the (2-sided) corkscrew condition (Definition \ref{corkscrews}), we see that
$Q_1^{up}$ and $Q_1^{down}$ lie in distinct connected components of
$\ree\setminus \Sigma$, call them $\Omega^+$ and $\Om^-$ respectively,
provided that we fix $\ep$ small enough depending
on the constant $\gamma_0$ in Definition \ref{corkscrews}.  In particular, we choose
$\ep<1$, and then define
\[Q^{\pm}:= Q_{r_1}(0,\pm 3r_1,0) \subset \Om^\pm \cap Q_1\,.
\]
Since $Q_1\subset Q_r(X,t)$, and  $r_1\approx r$,
the conclusion of Theorem \ref{urtscorkscrews} follows.

\subsection{Proof of Theorem \ref{ParaAHMNT.thrm}} The proof of Theorem \ref{ParaAHMNT.thrm} has similarities with the proof of Theorem \ref{urtscorkscrews}.  Let $k\geq 2$.
We introduce the {\em bilateral}
dyadic $\beta_\infty$ numbers
\begin{equation*}%\label{bibetadef.def}
b\beta_{\infty}(\qc) :=
\diam(\qc)^{-1}\inf_{P}\left\{  \sup_{\Yb \in k\qc} \dist(\Yb,P)
\, + \sup_{ \Zb \in P \cap B(\Xb_\qc,\,k\diam(\qc))}  \dist(\Zb,\Sigma) \right\}\,,
\end{equation*}
where $\Xb_\qc$ is the ``center" of the dyadic cube $\qc\subset \Sigma$,
as in  Definition \ref{def.dyadiccube} $(v)$.
We say that
$\Sigma$ satisfies the bilateral weak geometric lemma with parameter $\epsilon$,
%written $\Sigma \in \BWGLem (\epsilon)$,
if there exists $M_\epsilon > 0$ such that for every dyadic cube $\mathcal{R} \in \mathbb{D}(\Sigma)$,
\[\sum_{\substack{\qc \subseteq \mathcal{R}\\ b\beta_{\infty}(\qc) > \epsilon}} \sigma(\qc) \,\le \,
\, M_\epsilon \,\sigma(\mathcal{R}).  \]
Since $\Sigma$ is parabolic UR we can apply  \cite[Theorem 4.16]{BHHLN2} and
\cite[Theorem 4.15(iii)]{BHHLN2} to conclude that $\Sigma$ satisfies the parabolic
bilateral weak geometric lemma, for every fixed $\ep>0$,
where $k\geq 2$ is at our disposal, and will
eventually be chosen large enough.   We now follow one of the two arguments in \cite{AHMNT}.

Let $Q_r(X,t)$ be centered on $\Sigma$, and let $\ep>0$ be a sufficiently small number
to be chosen.  Following the proof of Theorem \ref{urtscorkscrews} in the preceding subsection,
we may again construct a dyadic cube $\qc_1$,
of diameter $r_1 \approx r$, for which now
$b\beta_{\infty}(\qc_1)<\ep$,  along with slightly modified versions of the
cubes $Q^\pm$
as above, still disjoint from $\Sigma$, and contained in the same cube $Q_1$ as before,
but now off-set in time, so that
\[Q^{\pm}:= Q_{r_1}(0,\pm 3r_1, \pm r_1^2) \,.
\]
In addition, 
by the interior corkscrew condition, choosing $\ep$ small enough
we may assume without loss of generality that $Q^+\subset \Omega$.  If $Q^-$ lies in a different
connected component of $\ree \setminus \Sigma$ than does $Q^+$, we are done.
Otherwise if both $Q^\pm\subset \Omega$, then by the Harnack Chain condition
we may connect the points $\Yb^\pm := (0,\pm 3r_1, \pm r_1^2)$ by a chain of cubes $\{Q^m\}_m$
of uniformly bounded cardinality, with
\[ Q^m\subset \Omega \cap Q_{Cr_1}(0,0)\,,\,\, \text{ and }
\,  \ell(Q^m)\approx \dist(Q^m,\Sigma) \geq c r_1\,,
\]
for every $m$, and
with $c,C$ each depending on the constants in the
Harnack Chain condition.  For $k\gg C$, and $\ep\ll c$, we contradict
the fact that $b\beta_{\infty}(\qc_1)<\ep$.
The proof of Theorem \ref{ParaAHMNT.thrm} is complete.

\section{The proof of Theorem \ref{th1} and Corollary \ref{cor1}} \label{sec4}

 We here prove Theorem \ref{th1} and Corollary \ref{cor1}. We will give the proofs only in the case  when $\diam\Sigma=\infty$, $T_0=-\infty$ and $T_1=\infty$. Throughout the section we assume that  $\Sigma$ is a closed subset of $\R^ {n+1}$, which is parabolic ADR with constant $M$, and we assume that $\Sigma$ satisfies the weak time-synchronized two cube condition in the sense of Definition \ref{tsynk} with $\gamma_1\in (0,1)$.

 It is true that the proof of Theorem \ref{th1} has substantial overlap with the corresponding result in \cite{NS} and the difference is that in our proof we have to be even more careful  as we only assume that  $\Sigma$  satisfies the {weak  time-synchronized two cube condition} while in \cite{NS} it is assumed that  $\Sigma$  satisfies the {synchronized two cube condition}. For the convenience of the reader we in the following give what we believe is a  sufficiently detailed presentation of the proof of  Theorem \ref{th1} and we try to highlight the key differences in the proof compared to \cite{NS}.

 We have divided our presentation into three subsections, Subsections 5.1-5.3.  In Subsection 5.1 we reduce the proof of  Theorem \ref{th1} to Proposition \ref{mainprop}. In Subsection 5.2 we prove  Corollary \ref{cor1} and in Subsection 5.3 we prove Proposition \ref{mainprop}.

 \subsection{Reducing  Theorem \ref{th1} to Proposition \ref{mainprop}}
 The argument in this subsection follows closely its counterpart in \cite{DJ}, but of course
 adapted to the parabolic setting.
 We start by redefining
\begin{eqnarray}\label{redef}\mbox{$M$ to equal  $\max\{M,\sqrt{n}\gamma_1,4n$\}}.
\end{eqnarray}
Based on \eqref{redef} we can without loss of generality assume that $\Sigma$ is parabolic ADR with constant $M$ and that there exist,
 for all $(X,t)\in \Sigma$ and $R>0$, two parabolic cubes $Q_\rho(X_1,t_1)$, $Q_\rho(X_2,t_2)$, both contained in $Q_R(X,t)$ but belonging to different connected components of $\mathbb R^{n+1}\setminus\Sigma$, and with
\begin{eqnarray}\label{syncmod}
\rho=M^{-1}R\,,\quad  t_1=t'=t_2.
\end{eqnarray}
Consider the points $(X_1,t' )$, $(X_2,t')$, and consider the line in $\mathbb{R}^n\times \{t'\}$ %$\subset\ree$
connecting $(X_1,t')$
and $(X_2,t')$. As
$(X_1,t' )$, $(X_2,t')$  belong to different connected components of $\mathbb R^{n+1}\setminus\Sigma$,
this line meets $\Sigma$ at a point which we denote by $(X',t')$. Let $\delta_i:=||X_i-X'||$, $i=1,2$, and note that $M^{-1}R\leq \delta_i\leq R$. We will construct the big piece of Lip(1,1/2) graph to be contained
in the set of points on $\Sigma$ which are reached by lines emanating from points in $Q_\rho(X_1,t_1)$ and which are parallel to the line connecting
$X_1$ and $X_2$. It is clear that we can translate and re-scale our setting about the point $(X',t')$ and in particular we can in the following and without loss of generality assume that  $$\mbox{$R= 2M$ and $(X',t')=(0,0)$.}$$
Through this $(X_1,t' )$, $(X_2,t')$ are mapped to $(Y_1,0)$, $(Y_2,0)$, the corkscrew cubes $Q_\rho(X_1,t_1)$, $Q_\rho(X_2,t_2)$ are mapped to $Q_2(Y_1,0)$, $Q_2(Y_2,0)$, and
$\Sigma$ is mapped to a new closed set having the same quantitative properties as $\Sigma$: for simplicity we will, with an abuse of notation, also use the notation $\Sigma$ for this set.

Consider the time-independent hyperplane $\pc$ which passes through $(0,0)$ and is orthogonal
to $(Y_1,0)$. Then by construction we can  after a possible rotation in the spatial coordinates,
represent points in $\mathbb R^{n+1}$ as
$\mathbf{X}=(x,x_n,t)\in\mathbb R^{n-1}\times\mathbb R\times\mathbb R $, and in this
coordinate system $\mathbf{Y}_1:=(Y_1,0)$ and $\mathbf{Y}_2:=(Y_2,0)$ are  represented by
$$\mathbf{Y}_1=(0,\overline{M}_1,0),\ \mathbf{Y}_2=(0,\overline{M}_2,0),\mbox{ respectively},$$
where  $2 \leq \overline{M}_i \leq 2M$. We may then identify the hyperplane $\pc$ with
$\mathbb R^{n-1}\times \{0\}\times\mathbb R$. Let
$\pi$ denote the orthogonal projection onto this plane and let $\pi^\perp$ denote the
orthogonal projection onto the normal to the plane. Let $\mathcal{U}$ be the component of
$\mathbb{R}^{n+1} \setminus \Sigma$ containing $\mathbf{Y}_1$.

Given
$(z,\tau)\in  \mathbb{R}^n$ we let
\[
I_r(z,\tau) = \{(y,s)\in \mathbb{R}^n: |y_i-z_i| <  r, i=1,..,n-1, \indent |s-\tau| < r^2\}.
\]
Define $I^0 := \overline{I_1(0,0)}$, 
and set $\overline{M}:=\overline{M}_1$,
\begin{eqnarray}\label{icube}
I^{\overline{M}} : = \{\mathbf{X}: (x,t)\in I^0,\ x_n = \overline{M} \}.
\end{eqnarray}
By construction, $I^{\overline{M}}$ is a closed $n$-dimensional parabolic cube
contained in the same component as $Q_2(\mathbf{Y}_1)$ (namely
$\mathcal{U}$), and
$d(I^{\overline{M}},\Sigma) \geq 1$.  We also note that
$$
D := \pi\left(Q_{1/2}(\mathbf{Y}_2)\right) =\frac{1}{2}I^0,
$$
and $\sigma(D)=\mathcal{H}^n(D) = 2^{-n-1}$. In particular, choosing
\begin{eqnarray}\label{const1}
\gamma  = 2^{-n-2}\,,
\end{eqnarray}
we have
\begin{eqnarray}\label{ineq1}
\sigma(D)\ge 2\gamma.
\end{eqnarray}
Note that any line in the $x_n$ direction connecting $D \times \{x_n = -M\}$ with $I^{\overline{M}}$
has to intersect $Q_{1/2}(\mathbf{Y}_2)$ and $Q_2(\mathbf{Y}_1)$, thus it also has to intersect $\Sigma$.

Given $h>0$ we introduce
$$
\Gamma = \Gamma_h := \{\mathbf{X}\in\mathbb{R}^{n+1}:x_n \geq h\|(x,t)\|\},
$$
i.e. $\Gamma$ is a parabolic cone with aperture $h$, and we let
\begin{align}\label{Sdef}
S :=& \{\mathbf{X} \in \Sigma: -M \leq x_n \leq \overline{M},\mbox{ and if
$\mathbf{Y}\in \mathbf{X}+\Gamma$, $y_n =
	\overline{M}$, then ${\bf Y}\in I^{\overline{M}}$}\}.
\end{align}
Note that $S\subset\Sigma$, and that
$\pi(S)\subset I^0$.  Also,
if in  this construction we choose $h\geq 6M$, it follows that if $\mathbf{X}=(x,x_n,t) \in \Sigma$, $-M \leq x_n \leq \overline{M}$, and if $(x,t) \in D$, then
${\bf Y}\in I^{\overline{M}}$ whenever $\mathbf{Y} \in \mathbf{X} + \Gamma$
is such that $y_n = \overline{M}$.   Indeed, for such a point $\mathbf{Y}$, we have
\begin{align*}
3M \geq \overline{M} + M \geq y_n-x_n &\geq h\|(y,s) - (x,t)\|\geq h(\|(y,s)\|-\|(x,t)\|).
\end{align*}
Hence
\begin{align*}
 3M \geq h(\|(y,s) \|-{1}/2)
\implies 1 \geq \|(y,s)\|,
\end{align*}
as $h\geq 6M$ and the last conclusion in the display implies that $ (y,s) \in I^0$.
In particular,  $D\subset \pi(S)$ and thus by \eqref{ineq1},
 \begin{eqnarray}\label{eq5.6}
 \H^n(\pi(S)\cap I^0)=
\H^n(\pi(S))\ge 2\gamma.
\end{eqnarray}
 To prove  Theorem \ref{th1} it  suffices to prove the following proposition.
 \begin{proposition}\label{mainprop} Let $\gamma$ be as in \eqref{const1}, \eqref{ineq1}. Then there exists $h>0$, depending only on $n$ and $M$, such that if we let $\Gamma=\Gamma_h$, and if we define
\begin{eqnarray*}
W:=\{(x,t)\in I^0: \exists\ \mathbf{X}=(x,x_n,t)\in S,\ (\mathbf{X}+\Gamma)\cap S=\{\mathbf{X}\}\}
\end{eqnarray*}
(so that in particular, $W\subset \pi(S)$),
then  $\H^n(\pi(S)\setminus W)\le \gamma$.
\end{proposition}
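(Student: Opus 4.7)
The plan is to follow the David--Jerison argument \cite{DJ} in the parabolic form carried out in \cite{NS}. The reduction performed in Subsection 5.1 has already used the weak time-synchronization to set up the present geometric configuration; the proof of Proposition \ref{mainprop} itself does not make further use of synchronization, so our argument will essentially coincide with that of \cite{NS}.

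First, I would describe $W$ and $T := \pi(S) \setminus W$ through the \emph{upper envelope}. Define
$$
\Psi(x,t) := \sup\{x_n : (x, x_n, t) \in S\},\qquad \mathbf{X}_0(x,t) := (x, \Psi(x,t), t)\,,
$$
on $\pi(S)$. Since $S$ is closed and bounded, the supremum is attained, so $\mathbf{X}_0(x,t) \in S$. Any non-topmost point of $S$ above $(x,t)$ fails cone-isolation (the topmost $\mathbf{X}_0$ lies vertically above it, hence in its upward cone), so $(x,t) \in W$ if and only if $\mathbf{X}_0(x,t)$ itself is cone-isolated, i.e., $(\mathbf{X}_0(x,t) + \Gamma) \cap S = \{\mathbf{X}_0(x,t)\}$. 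For $(x,t) \in T$ we pick $\mathbf{Y}(x,t) \in S\setminus \{\mathbf{X}_0(x,t)\}$ with $\mathbf{Y}(x,t) - \mathbf{X}_0(x,t) \in \Gamma$; the topmost property of $\mathbf{X}_0$ forces $(y,s) := \pi(\mathbf{Y}(x,t)) \neq (x,t)$, and the cone inequality combined with $\Psi \in [-M, \overline{M}]$ yields
$$
\Psi(y,s) - \Psi(x,t) \geq h\,\|(y,s) - (x,t)\|,\qquad r(x,t) := \|(y,s) - (x,t)\| \leq 3M/h\,.
$$

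Next, I would apply a parabolic Vitali-type covering theorem to the family $\{I_{r(x,t)}(x,t)\}_{(x,t)\in T}$, extracting a countable disjoint subcollection $\{I_{r_k}(x_k, t_k)\}_k$ such that $T \subset \bigcup_k I_{5r_k}(x_k, t_k)$, whence
$$
\H^n(T) \leq C_n \sum_k r_k^{n+1}\,.
$$
The key quantitative step is a packing estimate $\sum_k r_k^{n+1} \leq C(n,M)/h$, proved as in \cite{NS, DJ}: the pairs $(\mathbf{X}_0(x_k, t_k), \mathbf{Y}(x_k, t_k)) \in S \times S$ have $x_n$-coordinates separated by at least $h r_k$ inside the bounded interval $[-M, \overline{M}]$, while their projections $(x_k, t_k)$ lie in disjoint cubes $I_{r_k}(x_k, t_k) \subset I^0$. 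Packaging this separation data together with the parabolic ADR of $\Sigma$, via a careful Vitali/Besicovitch-type accounting on $\Sigma$ inside the ambient slab $I^0 \times [-M, \overline{M}]$, yields the required decay in $h$. Choosing $h = h(n,M)$ sufficiently large then gives $\H^n(T) \leq \gamma = 2^{-n-2}$.

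The main obstacle is this packing estimate. In the parabolic regime one must respect the non-isotropic scaling throughout the Vitali selection and the subsequent mass accounting; moreover, because the weak synchronization hypothesis does not force the time coordinate of $\mathbf{Y}(x,t)$ to equal $t$, the disjointness of the pieces of $\Sigma$ corresponding to distinct selected cubes $I_{r_k}(x_k, t_k)$ must be verified by hand. The argument of \cite{NS} under the strong synchronization is precisely this packing estimate, and our task is to check it still goes through under our weaker hypothesis. Once this is done, Theorem \ref{th1} follows from Proposition \ref{mainprop} by the standard conversion of ``bad set small'' into ``one-sided Lipschitz graph on a big piece'' via the upper envelope $\Psi$ restricted to $W$, as in \cite{DJ}.
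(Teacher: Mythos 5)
There is a genuine gap, and it sits exactly at the point you yourself flag as ``the main obstacle.'' Your argument reduces everything to the single-scale packing estimate $\sum_k r_k^{n+1}\le C(n,M)/h$ for a Vitali-selected family of disjoint base cubes $I_{r_k}(x_k,t_k)\subset I^0$, but you give no mechanism that produces the factor $1/h$. The data you propose to ``package'' (disjoint base cubes, vertical separation $y_n-x_n\ge h r_k$ inside the bounded slab $[-M,\overline M]$, and parabolic ADR) do not yield it: the disjointness of the base cubes makes the surface balls $\Delta(\mathbf{X}_0(x_k,t_k),r_k/10)$ pairwise disjoint, and ADR then gives $\sum_k r_k^{n+1}\lesssim \sigma\bigl(\Sigma\cap Q_{CM}(0,0)\bigr)\le C(n,M)$ --- a bound with no decay in $h$, hence useless for making $\H^n(\pi(S)\setminus W)\le\gamma$. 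The vertical rises $h r_k$ over \emph{disjoint} base cubes do not compete for the same oscillation budget $3M$ (that budget only constrains rises stacked over a common base column), so no rising-sun/bounded-variation accounting closes the estimate. This is precisely why the proof in the paper (following \cite{DJ}, \cite{NS}) is not a covering argument at all: it decomposes the bad set by the size of the vertical gap into the sets $\Sigma_j$ (gap $\approx A^{-j}$), removes a set $\B$ of measure $\le\gamma/2$ via the parabolic Hardy--Littlewood maximal function of $\nu(A)=\sigma(\pi^{-1}(A)\cap Q_{2M}(0,0))$, invokes a per-scale counting lemma (Lemma \ref{lemmaep}) asserting that at most $\ep\Lambda_0^{n+1}$ subcubes of each scale-$A^{-j}$ cube meet $\Sigma_j\setminus\B$, and then, for each surviving cube $J$, uses the corkscrew cube furnished by the weak time-synchronized two cube condition to build a ``shadow surface'' $S(J)\subset(\Sigma\cap Q_{2M}(0,0))\cup(I^0\times\{x_n=-A\})$; Lemma \ref{disjointlemma} makes these surfaces disjoint across generations $j$, so that ADR bounds their total mass and the smallness comes from the free parameter $\ep$, with $h$ chosen large only to make the geometry (e.g.\ $\hat J\subset 2J$ and the disjointness) work. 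Your proposal contains no substitute for the counting lemma, the maximal-function bad set, or the disjoint-surface summation, and the single-scale Vitali selection cannot see the multi-scale structure these handle.

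A second, related error: you assert that the proof of Proposition \ref{mainprop} ``does not make further use of synchronization.'' It does. The weak time-synchronized corkscrew condition is applied inside the proof, at each point $\mathbf{Y}$ and scale $\Lambda_1^{-1}A^{-j}$, to produce the cube $Q$ (center $\mathbf{U}$) in a component of $\mathbb R^{n+1}\setminus\Sigma$ different from $\mathcal U$ that underlies the construction of $S(J)$; the new feature relative to \cite{NS} is exactly that the $t$-coordinates of $\mathbf{U}$ and $\mathbf{Y}$ need no longer coincide, which the paper must (and does) check is harmless. Moreover Lemma \ref{lemmaep} is deduced in \cite{NS} from their Lemma 2.5, whose proof uses the two cube condition, and part of the content of this section is verifying that the weak version suffices there. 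So the corkscrew/synchronization hypothesis cannot be quarantined in Subsection 5.1: without it (or something equally strong), the quantitative smallness you need simply is not available from ADR alone, which is why your sketched ``Vitali/Besicovitch-type accounting'' cannot be completed as stated.
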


We defer the proof of Proposition \ref{mainprop} until Subsection \ref{ss5.3} below.

\begin{remark}\label{r5.1} Let us record a remark summarizing the preceding observations.
Set
\[W':=\left\{{\bf X}=(x,x_n,t)\in S:\, (\mathbf{X}+\Gamma)\cap S=\{\mathbf{X}\}, \text{ and }
 \pi({\bf X})= (x,t) \in I^0\right\}
\]
(thus, $\pi(W') = W$), and define
\[\Om':= {\tt int}\left(\bigcup_{{\bf X}\in W'} (\mathbf{X}+\Gamma)\right) .
\]
Then
\[
\Om'\cap \{y_n< \overline{M}+ 2\}\subset \mathcal{U}
\]
(recall that $\mathcal{U}$ is the component of
$\mathbb{R}^{n+1} \setminus \Sigma$ containing $\mathbf{Y}_1$), and
$\pom'$ is given by a Lip(1,1/2) graph $\{(y,\psi(y,s),s)\}$, where $\psi$ has Lip(1,1/2) norm equal to $h$.
Note that $W' \subset \Sigma \cap \pom'$, and thus
\[\pi(\Sigma\cap \pom')\cap I^0\supset \pi(W')= W\,.
\]
Also, by Proposition \ref{mainprop}, we have $\H^n(\pi(S)\setminus W)\le \gamma$, and therefore by
\eqref{eq5.6},
\[
\H^n(\pi(\Sigma\cap \pom')\cap I^0)\geq\H^n(W)\geq \gamma
\]
Furthermore, if for some $N\geq 2$, we have that $Q_N({\bf Y}_1)\subset  \mathcal{U}$, then
\begin{equation}\label{eq57}
\Om'\cap \{y_n< \overline{M}+ N\}\subset \mathcal{U}\,.
\end{equation}
\end{remark}

Thus, taking Proposition \ref{mainprop} for granted,
we conclude that there is a Lip (1,1/2)  graph $G$ with constant $ h$ such that
$\H^n(\pi(\Sigma\cap G)\cap I^0)\ge \gamma$.
Thus, conditioned on Proposition \ref{mainprop} the proof of  Theorem \ref{th1} is complete.

Proposition \ref{mainprop} is essentially
Lemma 2.1 in \cite{NS}, and we again emphasize that the difference
now is that in the present proof of this key result
we assume only
that  $\Sigma$  satisfies the {\em weak}  time-synchronized two cube condition, while
in \cite{NS} it is assumed that  $\Sigma$  satisfies the (strong)
synchronized two cube condition. This weaker assumption will force us to revisit
certain subtleties of the proofs in \cite{NS} and \cite{DJ}.

 \subsection{Proof of Corollary \ref{cor1}} Let $\Omega\subset\mathbb R^{n+1}$ be an open
 set with $\pom = \Sigma$, and 
 assume that $\Omega$ satisfies a corkscrew condition in the sense of Definition \ref{corkscrews+}
 with constant $\gamma_0$ and that  $\Sigma$ is time-symmetric ADR in the sense of
 Definition \ref{def.TBTFTSADR}
with constant $M'$.  Consider  $(\hat{X},\hat{t}) \in \Omega$ and let  $(X',t') \in \partial \Omega$ be a
point such that
$$d_p(\hat{X},\hat{t},X',t') = d_p(\hat{X},\hat{t},\partial \Omega)=:d.$$
Our hypotheses are invariant under the change of variable $t\mapsto - t$,
so without loss of generality we may assume that $t' \geq \hat{t}$.

Let $N$ be a sufficiently large number to be chosen.  If $t'-\hat t \leq (N^{-2}h^{-1}d)^2$, then we
translate in time so that
$(X',t') = (X',0)$.  Otherwise, if $t'-\hat t > (N^{-2}h^{-1}d)^2$, then using TBADR, and iterating
Lemma \ref{GHlemma},
we may find $(X'',t'')\in \Sigma$ such that
\[d_p(\hat X,\hat t,X'',t'') \approx d
\]
(depending implicitly on the constants in Lemma \ref{GHlemma}), with
$|t''-\hat t| \leq (N^{-2}h^{-1}d)^2$.  In this case, we translate in time so that
$(X'',t'') = (X'',0)$.  We set $\widetilde X:= X'$ in the first case, and
$\widetilde X:= X''$ in the second.
In either case,
 upon application of the corkscrew condition at $(\widetilde X,0)$, we can produce a
 corkscrew cube
$$Q_0= Q_0(\Xb^0)\,,\quad \text{for some } \Xb^0=(X^0,t^0)\in \mathbb{R}^{n+1} \setminus \Sigma\,,$$
 of (parabolic) diameter $N^{-2}h^{-1}\gamma_0 d/100$, whose
 distance to $(\widetilde X,0)$
is no more than $N^{-2}h^{-1}d$, and
  which is contained in a connected component of $\mathbb{R}^{n+1} \setminus \Sigma$ that
  does {\em not} contain $(\hat{X},\hat{t})$.
  We define the point
  \[ \Xb^1= (X^1,t^1) := (\hat X,t^0)
  \]
and we construct the subcube
  \[ Q_1(X^1,t^1)=:Q_1\subset Q_{d}(\hat{X},\hat{t})\,,
  \]
  of (parabolic) diameter $N^{-2}h^{-1}\gamma_0 d/100$ (i.e., equal to that of $Q^0$).
Note that by construction, for $N$ large we have $(X^1,t^1)\in\Omega$, and in fact
\[ d_p(X^1,t^1,\hat{X},\hat{t})= |t^0-\hat t|^{1/2}\lesssim N^{-2}h^{-1} d\ll d\,.
\]
Since
$\Xb^1$ and $\Xb^0$ lie in different connected components of $\mathbb{R}^{n+1} \setminus \Sigma$,
the line connecting them meets $\Sigma$, say at the point $\Xb^2=(X^2,t^0)$ (here we are using that $t^1=t^0$),
and by a translation in the space
variables, we may suppose that $X^2=0$.
Let $\pc$ denote the hyperplane through $(X^2,t^0) = (0,t^0)$
orthogonal to the line joining $\Xb^1$ to $\Xb^0$,
and note that
since $t^1=t^0$, the plane $\pc$ is parallel to the $t$-axis.
 Letting $\pi$ denote projection onto $\pc$, we have by construction that
 $\pi(\Xb^1) =\pi(\Xb^0)$.
We perform a rotation in the spatial variables, so that
$\P = \mathbb R^{n-1}\times \{0\}\times\mathbb R$, and so that
in this new coordinate system, for $N$ large, 
\[
(\hat{X},\hat t\,)= (0,\hat{x}_n,\hat t) = (0, \kappa d,ad^2)\,,\quad \text{ with } \,\, \frac12\leq \kappa\leq \kappa_0\,,
\text{ and }
|a|\leq (N^2 h)^{-2}\,,
\]
where $\kappa_0$ is uniformly controlled from above, and
\[\Xb^2\cong \pi(\Xb^2)=\pi(\Xb^1) =\pi(\Xb^0) = (0,t_0)= (0,\tilde ad^2)\,,
\quad \text{ with }
|\tilde a|\lesssim (N^2 h)^{-2}\,,
\]
After making a possible slight adjustment in diameter, by a purely dimensional factor $c(n)$, we may assume that
$Q_0$ and $Q_1$ have been rotated so that their faces are parallel to the coordinate hyperplanes in the new
coordinate system.

 Clearly, there is a constant $C\geq 1$ such that $Q_d(\hat{X},\hat{t})$, $Q_0$ and $Q_1$ are all contained in 
 $Q_{Cd}(0,t_0)$.  Furthermore, we can view $Q_0$ and $Q_1$
 as weak time-synchronized corkscrew cubes relative to $Q_{Cd}(0,t_0)$, so
 using the boundary point $(0,t_0)$ in place of the origin,
 we can run the argument above (as in the proof of Theorem \ref{th1}),
 with corkscrew cubes $Q_0$ and $Q_1$ at point $(0,t_0)$ and scale $2d$,
  to obtain the interior domain (see Remark \ref{r5.1}):
 \begin{equation}\label{eq.tom}
 \tilde\Omega=\{(Y,s):(y,s)=\pi(Y,s)\in I_*\,, \,\,\psi(y,s)<y_n<\kappa_1d\}
 \subseteq \Omega,
 \end{equation}
 where $\psi$ is a Lip(1,1/2) function with norm $h$, and where
 \begin{equation}\label{eq.istar}
 I_*:=  \pi(Q_*)\,,\quad  Q_*:=Q_{(Nh)^{-1} d}\, (0,0,t^0)\,,\quad \kappa_1:= \kappa + c(n)/200
 \end{equation}
 %with $C$ chosen large enough that $\pi(5Q_0\cup 5Q_1)$,
 (so that $\kappa_1d = \hat x_n + c(n) d/200$), and
  \begin{equation}\label{eq.tbp}
  \mathcal{H}^n(\pi(\Sigma \cap \partial \tilde{\Omega}) \cap I_*)\geq \epsilon d^{n+1}\,,
   \end{equation}
 for some $\epsilon = \epsilon(n,\gamma_0,M')$.
    We should note that, when running the argument as in the proof of Theorem \ref{th1},
 we perform a parabolic rescaling, and then we ``undo" the
 parabolic rescaling to obtain the set $\tilde{\Omega}$ above.
 We observe that by construction,
 \begin{equation}\label{eq.tcs}
 Q_{N^{-2}h^{-1}d}(\hat X, \hat t)\subset \tilde{\Omega}\,,
 \end{equation}
 provided that we choose $N$ large enough.

 This concludes the proof of Corollary \ref{cor1}.

 \begin{remark}\label{r5.2}
 For future reference, let us record some additional observations.  To begin,
letting $G$ denote the graph
 of $\psi$, we may find
a point ${\bf X}^*=(x^*,x_n^*,t^*)\in \Sigma \cap G$ such that $\pi({\bf X}^*) \in  I_*$:
just choose ${\bf X}^*$ in the un-rescaled version of the set $W'$ in Remark \ref{r5.1}.
Note that such an ${\bf X}^*$ lies below the bottom face of $Q_{c(n)d}(\hat X,\hat t)$
(by construction of $G$, since
${\bf X}^* \in \Sigma$), hence
we see that $x_n^* \leq (\kappa - c(n))d$.
Since $\diam(I_*) \lesssim (Nh)^{-1} d$, and since $\psi$ has Lip(1,1/2) norm $h$, we find that
\begin{equation}\label{eq.psiest}
\sup_{(y,s) \in 100I_* }\psi(y,s) \leq (\kappa - c(n) +CN^{-1}) d \leq  (\kappa - c(n)/2) d\,,
\end{equation}
for $N$ large enough, and therefore with $c_1=c(n)/2$, we have
\begin{equation}\label{eq5.8}
 \hat{x}_n - \sup_{(y,s) \in 100I_* }\psi(y,s) \geq c_1 d \approx N \diam(I_*)\,.
\end{equation}
We note also that $Q_*$ is centered on
 $\Sigma$, at $\Xb^2=(0,0,t_0)$).
\end{remark}

\subsection{Proof of Proposition \ref{mainprop}}\label{ss5.3}
We roughly follow the argument in \cite{DJ},  as adapted to the parabolic setting in \cite{NS}, with some
modest technical refinements to deal with the fact that the time-synchronization in our 2-cube condition holds
only weakly.
As above, we identify $\R^n$ with the hyperplane $\pc= \{x_n=0\}$.
For any $$\mathbf{X}\in \{\mathbf{X}=(x,x_n,t):\ (x,t)\in I^0,\ x_n\in[-M,\bar M]\},$$
we let $L(\mathbf{X})$  denote the open line segment in the $x_n$ direction which connects $\mathbf{X}$ to $(x,\bar M,t)$. If $\mathbf{X}\in\Sigma$, then the length of $L(\mathbf{X})$ is at least $d(I^{\bar M},\Sigma)\ge 1$.
Define $\tilde \G$ to be the closure of the set of all such points $\mathbf{X}\in\Sigma$ which satisfy $L(\mathbf{X})\cap\Sigma=\emptyset$, and, recalling that the set $S$ is defined in
\eqref{Sdef}, we let
$$\G:=\tilde \G\cap S\subset \Sigma.$$
Given $A\subset \R^n$, set
\[\nu(A):= \sigma\left(\pi^{-1} (A)\cap Q_{2M}(0,0)\right)\,,
\]
where again $\pi$ denotes the orthogonal projection onto
$\pc = \mathbb R^{n-1}\times \{0\}\times\mathbb R$,
and note that $\nu$ defines a Borel measure with total mass
\[
\|\nu\|\leq \sigma\left(Q_{2M}(0,0)\right) \leq C
\]
(since $\sigma$ is ADR).
For $(x,t)\in I^0$, define
\[
\mc (x,t)=\sup\left\{\frac{1}{\H^n(I)}\,\sigma(\pi^{-1}(I)\cap Q_{2M}(0,0)): \,I\text{ contains }(x,t)\right\}\,,
\]
where the supremum runs over all parabolic cubes $I\subset \pc$ with $(x,t)\in I$,
so that $\mc(x,t) =\mc \nu(x,t)$, the parabolic
Hardy-Littlewood maximal function of $\nu$.
We let $\nt$ be a suitably large constant to be chosen momentarily.
Then, by the standard weak-type bounds, we have
\begin{eqnarray*}\mbox{$\B:=\{(x,t)\in \R^{n}:\ \mc(x,t)\ge \nt\}$ satisfies $\H^n(\B)\leq C/\nt$},
\end{eqnarray*}
for some constant $C=C(n,M)\geq 1$.
Then for $\nt=\nt(n,M,\gamma)$ large enough, we have
\begin{eqnarray}\label{ineq2}
\H^n(\B)\le \gamma/2.
\end{eqnarray}
We fix $\nt$ with respect to \eqref{ineq2}. In particular, since $\gamma$ is a purely dimensional constant
previously fixed (see  \eqref{const1}, \eqref{ineq1})
 it follows that $\nt$ is
from now on a fixed constant depending only on $n$, $M$.

Having fixed $\gamma$ and $\nt$, there will appear, in the construction to be outlined, four important constants: $\Lambda_0$, $\Lambda_1$, $\Lambda_2$, and $\Lambda_3$, with $1\leq\Lambda_i<\infty$ for $i\in\{0, 1,2, 3\}$. In general all constants
appearing will depend at most on $n$, $M$, $\Lambda_0$, $\Lambda_1$, $\Lambda_2$, and $\Lambda_3$. We will choose the degrees of freedom $\Lambda_0$, $\Lambda_1$, $\Lambda_2$, and $\Lambda_3$ to depend only on $n$, $M$, $\gamma$ and $\nt$, and hence to depend only on $n$, $M$. Furthermore, $\Lambda_i$ for $i\in\{0, 1,2, 3\}$, will be chosen to be of the form $2^{N_i}$ for some integer $N_i\geq 1$.

We let, for $\Lambda_1$ fixed and as above, we choose a dyadic number $A=2^{k_0}$ large enough that
\begin{eqnarray}\label{const3}
%A=[\log_2(2M\Lambda_1)]+1.
A > 2M\Lambda_1
\end{eqnarray}
 With $A$ fixed, we define, for $j\in\{0,1,...\}$,
\begin{align*}
\Sigma_j=\{&(x,t)\in I^0:\text{there exist }\mathbf{X}\in\G\text{ and } \mathbf{Y}\in S\text{ such that }\\
&\mathbf{X}=(x,x_n,t), \mathbf{Y}\in \mathbf{X}+\Gamma\text{ and }A^{-j}\le y_n-x_n < A^{-j+1}\}.
\end{align*}
If $\mathbf{X}=(x,x_n,t)\in S$, there exists a maximal $\hat x_n$ such that $(x,\hat x_n,t)\in S$. This follows since $x_n\le \bar M$ if $(x,x_n,t)\in S$, $I^{\bar M}\subset \U$ and $S$ is closed. Thus $(x,\hat x_n,t)\in \G$, which
shows that $\pi(\G)=\pi(S)$. When $(x,t)\in \pi(S)\setminus W$ we have
$\left[(\mathbf{X}+\Gamma)\setminus\{{\bf X}\}\right]\cap S\neq \emptyset$
whenever $\mathbf{X}=(x,x_n,t)\in S$. In particular, this is true for $\hat{\mathbf{X}}=(x,\hat x_n,t)\in \G$,
the maximal point constructed above,
so there exists  $\mathbf{Y}\in S\setminus \{{\hat{\bf X}}\}$ such that $\mathbf{Y}\in \hat{\mathbf{X}}+\Gamma$.
By our restriction on $A$ we have $$\mbox{$\pi(S)\setminus W\subset \cup_j \Sigma_j$.}$$
Furthermore, as by construction $\H^n(\B)\le \gamma/2$, the proof of Proposition \ref{mainprop} is reduced to proving that
\begin{eqnarray}\label{finalest}
\H^n\left(\cup_j\Sigma_j\cap (\mathbb R^n\setminus \B)\right)\le \gamma/2\,.
\end{eqnarray}
To continue the proof we will need the following lemma.
\begin{lemma}\label{lemmaep}
Let $\ep>0$ be given.  Let $\Lambda_1$ be  as above and define
$A$ as in \eqref{const3}.  Then there exist  $\Lambda_2$, and $\Lambda_3$ as above,
and an integer $N_0=N_0(\epsilon,\Lambda_2)\geq 1$, such that if we let  $\Lambda_0=\Lambda_2^{\tilde N_0}$, for some $\tilde N_0\geq N_0$, and if we restrict $h$ to satisfy  $h\geq 2A\Lambda_0\Lambda_1\Lambda_3$, then the following is true. Let $j\geq 0$ and $I\subset I^0$ be a dyadic cube of length
$\ell(I)=A^{-j}$. Then  the number of dyadic cubes $J$ of length $\ell(J)=\Lambda_0^{-1}A^{-j}$ that are contained in $I$ and satisfy $J\cap (\Sigma_j\cap (\mathbb R^n\setminus \B))\neq\emptyset$,
is less than $\ep \Lambda_0^{n+1}$.
\end{lemma}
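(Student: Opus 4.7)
The plan is to adapt the packing argument in \cite{NS} to our weakly synchronized setting. For each bad subcube $J\subset I$, I first extract a witnessing configuration: a point $(x_J,t_J)\in J\cap\Sigma_j\cap(\mathbb R^n\setminus\B)$ together with $\mathbf{X}_J=(x_J,x_{J,n},t_J)\in\G$ and $\mathbf{Y}_J\in S$ such that $\mathbf{Y}_J\in\mathbf{X}_J+\Gamma$ and $y_{J,n}-x_{J,n}\in[A^{-j},A^{-j+1})$. The restriction $h\geq 2A\Lambda_0\Lambda_1\Lambda_3$ forces the lateral displacement $\|\pi(\mathbf{Y}_J)-\pi(\mathbf{X}_J)\|\leq A^{-j+1}/h\leq(2\Lambda_0\Lambda_1\Lambda_3)^{-1}A^{-j}$, which is much smaller than $\ell(J)=\Lambda_0^{-1}A^{-j}$; in particular distinct bad $J$'s yield distinct $\mathbf{Y}_J$'s whose projections are $\gtrsim\ell(J)$-separated.

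Next, I apply the weak time-synchronized two-cube condition (Definition~\ref{tsynk}) at $\mathbf{Y}_J$ at scale $R:=\Lambda_2^{-1}A^{-j}$ (assuming $\Lambda_2\geq 4$), producing corkscrew cubes $\hat Q_J^{\pm}$ of parabolic side $\gamma_1 R$ sharing a common time coordinate and lying in distinct components of $\mathbb R^{n+1}\setminus\Sigma$. Combining that $\mathbf{Y}_J\in S$ (so the vertical cone above $\mathbf{Y}_J$ terminates in $I^{\bar M}\subset\mathcal{U}$) with the smallness of the time-offset $R^2\approx\Lambda_2^{-2}A^{-2j}$ of the two corkscrews (controlled by choosing $\Lambda_3$ large), I identify one corkscrew as lying in $\mathcal U$ and the other in a component $\mathcal V_J\neq\mathcal U$; call the latter $\hat Q_J^{-}$ and its center $\mathbf{Z}_J$. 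For each $(y,\tau)\in\pi(\hat Q_J^{-})$ the vertical segment from $(y,z_{J,n},\tau)\in\mathcal V_J$ up to $(y,\bar M,\tau)\in I^{\bar M}\subset\mathcal U$ must cross $\Sigma$, yielding a graph piece $E_J\subset\Sigma$ over $\pi(\hat Q_J^{-})$. By Remark~\ref{r-measures}(ii)--(iii), $\sigma(E_J)\geq\Hpn(\pi(E_J))\gtrsim\H^n(\pi(\hat Q_J^{-}))\gtrsim R^{n+1}\approx\Lambda_2^{-(n+1)}A^{-j(n+1)}$.

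To run the packing, note that $\pi(\hat Q_J^-)\subset CI$ for a fixed $C=C(n)$, so $E_J\subset\pi^{-1}(CI)\cap Q_{2M}(0,0)$. Each bad $J$ contains a point of $\mathbb R^n\setminus\B$, so the maximal-function bound $\mc\nu\leq\nt$ gives
\[
\sigma\bigl(\pi^{-1}(CI)\cap Q_{2M}(0,0)\bigr)\leq\nt(CA^{-j})^{n+1}.
\]
A Vitali-type selection on $\{Q_{R}(\mathbf{Y}_J)\}_J$, combined with the $\gtrsim\ell(J)$-separation of the $\pi(\mathbf{Y}_J)$'s and a vertical slicing (using that the different $E_J$'s accumulate at different heights $\zeta_J(y,\tau)\in(z_{J,n},\bar M]$), yields an essentially disjoint subcollection $\{E_{J_k}\}$ of size $\geq N/K$, where $K=K(n,M,\gamma_1,\Lambda_2,\Lambda_3)$ is a fixed constant \emph{independent of} $\Lambda_0$. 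Summing, $(N/K)\Lambda_2^{-(n+1)}A^{-j(n+1)}\lesssim\nt(CA^{-j})^{n+1}$, so $N\leq C_\ast=C_\ast(n,M,\gamma_1,\nt,\Lambda_2,\Lambda_3)$. Finally, fixing $\Lambda_2,\Lambda_3$ and setting $\Lambda_0:=\Lambda_2^{\tilde N_0}$ with $\tilde N_0\geq N_0(\epsilon,\Lambda_2)$ chosen so that $\Lambda_0^{n+1}=\Lambda_2^{(n+1)\tilde N_0}\geq C_\ast/\epsilon$ gives $N<\epsilon\Lambda_0^{n+1}$ as claimed.

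The principal obstacle is twofold and both issues stem from the fact that $R\gg\ell(J)$ (since $\Lambda_0\gg\Lambda_2$). First, the component-identification step: under the strong synchronization of \cite{NS} the corkscrews sat on the slice $\{t=t_J\}$ and the components could be read off directly from the cone structure defining $S$ and $\G$, whereas here one must propagate the identification across the small time-offset of the weak corkscrews, which is the reason $\Lambda_3$ must be chosen large enough to make this offset negligible against $h^{-1}$. Second, the essential disjointness of the $E_J$'s is not automatic, because the horizontal projections $\pi(\hat Q_J^-)$ of distinct bad $J$'s can overlap heavily; the resolution uses the lateral-separation estimate from the first paragraph together with a height-slicing argument (exploiting that $E_J$ is a graph sitting above the level $z_{J,n}$ in the bounded strip $[-M,\bar M]$), reducing the overlap count to the fixed constant $K$ independent of $\Lambda_0$.
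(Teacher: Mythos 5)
The central step of your proposal does not hold up. Everything hinges on the claim that from the family $\{E_J\}$, each piece of measure $\gtrsim(\Lambda_2^{-1}A^{-j})^{n+1}$, you can extract an essentially disjoint subfamily of cardinality $\geq N/K$ with $K$ independent of $\Lambda_0$, which would give $N\leq C_\ast$ with $C_\ast$ independent of $\Lambda_0$. The tools you invoke cannot deliver this. All the $E_J$ lie in $\pi^{-1}(CI)\cap Q_{2M}(0,0)$, whose measure is $\leq \nt (CA^{-j})^{n+1}$ by the non-$\B$ assumption, so \emph{any} essentially disjoint subfamily has at most $\approx \nt C^{n+1}\Lambda_2^{n+1}$ members no matter how large $N$ is; likewise a Vitali selection among the cubes $Q_{R}(\mathbf{Y}_J)$, $R=\Lambda_2^{-1}A^{-j}$, inside a region of diameter $\approx \Lambda_2 R$ yields at most $\approx \Lambda_2^{n+1}$ cubes, independently of $N$. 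The height-slicing does not rescue the count: nothing prevents the heights $z_{J,n}$ (equivalently $x_{J,n}+O(A^{-j})$) attached to many distinct bad $J$'s from coinciding --- for instance when the relevant portion of $\G$ is essentially flat and all obstructions sit at comparable heights --- and then the first-hit sets $E_J$ over heavily overlapping projections are literally the same set, so summing their measures says nothing about $N$. (A smaller but related point: the witnesses $\mathbf{Y}_J$ for distinct, in particular adjacent, bad cubes need not be distinct or $\ell(J)$-separated, so even the separation input to the Vitali step is not available as stated.) Note also that the bound you are aiming for, $N\leq C_\ast$ independent of $\Lambda_0$, is far stronger than the lemma; if it were true, the hypotheses $\Lambda_0=\Lambda_2^{\tilde N_0}$ and $N_0=N_0(\ep,\Lambda_2)$ would be superfluous.

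The missing idea is that the $\ep$-smallness here is a genuinely multiscale phenomenon: one must use all $\tilde N_0$ intermediate generations between $\ell(J)=\Lambda_0^{-1}A^{-j}$ and $\ell(I)=A^{-j}$, iterating a one-scale statement so that the fraction of surviving bad subcubes decays in $\tilde N_0$ and drops below $\ep$ once $\tilde N_0\geq N_0(\ep,\Lambda_2)$; your argument only uses the factorization $\Lambda_0=\Lambda_2^{\tilde N_0}$ at the very end, to make $\Lambda_0^{n+1}$ large, which is a symptom of the absent mechanism. The paper itself does not reprove the lemma at all: it cites \cite[Lemma 2.2]{NS}, which is deduced there from \cite[Lemma 2.5]{NS}, and merely observes that the weak time-synchronized two cube condition of Definition \ref{tsynk} suffices where \cite{NS} used the synchronized one. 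So the task here is to verify that the multiscale argument of \cite{NS} survives the weakened synchronization; the single-scale packing you propose does not replace it, and as written the proof has a genuine gap.
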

\begin{proof}[Remark on the proof] This is Lemma 2.2 in \cite{NS} and %in \cite{NS} Lemma 2.2
its proof does not rely directly on any
two cube condition. On the other hand, \cite[Lemma 2.2]{NS}
is deduced as a consequence of Lemma 2.5 in \cite{NS}, whose proof does
make nominal use of
the synchronized two cube condition.
However,  a careful examination of the argument reveals that
the weak synchronized two cube condition is sufficient. We omit further details.
\end{proof}

Let $\epsilon>0$ be a degree of freedom to be fixed in \eqref{efix} below.
To proceed with the proof of \eqref{finalest}, given $j\geq 0$  we dyadically subdivide $I^0$ into
(non-overlapping) dyadic  cubes $\{I_{j,l}\}_l$ of length $\ell(I_{j,l})=A^{-j}$.
Note that there are $A^{(n+1)j}$ such cubes, since $I^0$ is a unit cube.
We then subdivide
 each cube $I_{j,l}$ further and for
 $\Lambda_0$  as in Lemma \ref{lemmaep}, we let
 $\{J_{j,l,k}\}_{k=1}^{k_l}$
 denote the so constructed set of dyadic cube of length $\ell(J_{j,l,k})=\Lambda_0^{-1}A^{-j}$, satisfying $J_{j,l,k}\subseteq I_{j,l}$ and $J_{j,l,k}\cap\bigl (\Sigma_j\cap (\mathbb R^n\setminus \B)\bigr )\neq\emptyset$.
 By Lemma \ref{lemmaep} we have, for each $I_{j,l}$, that the cardinality $k_l$
 of the collection $\{J_{j,l,k}\}_k$ is at most $\ep \Lambda_0^{n+1}$.
We then have
\begin{align}\label{finalest+}
\H^n\left(\cup_j\Sigma_j\cap (\mathbb R^n\setminus \B)\right) &\leq \,\sum_j\sum_l \,\H^n\left(I_{j,l}\cap\bigl (\Sigma_j\cap (\mathbb R^n\setminus \B)\bigr)\right)\notag\\
&\leq\,  \sum_j\sum_l\sum_{k=1}^{k_l} \, \H^n\left(J_{j,l,k}\right) \, .
\end{align}
Hence, to prove \eqref{finalest}, it suffices to  show that
\begin{eqnarray}\label{finalest++}
\sum_j\sum_l\sum_{k=1}^{k_l} \,\H^n\left(J_{j,l,k}\right)\leq \gamma/2.
\end{eqnarray}
 To prove \eqref{finalest++} we will associate, to each $J_{j,l,k}$, a
 surface $S(J_{j,l,k})$, and we intend to estimate the measure of $\left|J_{j,l,k}\right|$ in terms  of the measures of the sets $\{S(J_{j,l,k})\}$. The surfaces will not be uniquely defined but as we will see we will make the construction so that
 $S(J_{j,l,k})\cap S(J_{j',l',k'})=\emptyset$ whenever $(j,l,k)\neq (j',l',k')$,
 thus enabling efficient summation.

 To proceed with the construction of the surface $S(J_{j,l,k})$, consider $J:=J_{j,l,k}$ and choose any ${\bf X}\in \G$ and ${\bf Y}\in S$ such that $\pi({\bf X})\in J$, ${\bf Y}\in {\bf X}+\Gamma$ and $A^{-j}\le y_n-x_n<A^{-j+1}$. Applying the weak time-synchronized corkscrew condition, at $\mathbf{Y}$ and at scale $\Lambda_1^{-1}A^{-j}$, we see that there exists a cube  $Q\subset\mathbb R^{n+1}$  of length
$$\ell(Q)= R_j:=M^{-1}\Lambda_1^{-1}A^{-j}\,,$$
with center $\bf U$ and contained in  $Q_{\Lambda_1^{-1}A^{-j}}(\mathbf{Y})$,
and such that $Q$ belongs  to a component of $\mathbb R^{n+1}\setminus\Sigma$ different from $\U$. We recall that $\U$ is the component that contains $I^{\bar M}$. However, in contrast to \cite{NS} the $t$-coordinates of ${\bf U}$ and ${\bf Y}$ {\bf  do not necessarily coincide}. This turns out to be harmless.  Given $J$ we let
\begin{equation}\label{eq5.21}
\hat J=\hat J(J):=I_{\Lambda_1^{-1}R_j}(\pi({\bf U})).
\end{equation}
 We also introduce
$$\S:= (\Sigma\cap Q_{2M}(0,0))\cup (I^0\times\{x_n=-A\})$$
and we recall that $A\geq 2M$. Given $J=J_{j,l,k}$ we define $S(J)$ to be the set of all ${\bf V}\in\S$ such that $\pi({\bf V})\in \hat J=\hat J(J)=I_{\Lambda_1^{-1}R_j}(\pi({\bf U}))$, with
$v_n<u_n-R_j$, where ${\bf U}=(u,u_n,\tau)$, and such that the open line segment joining ${\bf V}$ to $\pi({\bf V})+(0,u_n-R_j,0)$ does not
meet $\Sigma$. By construction, since  $\pi(\S)\supset \hat J$,
we have
\begin{equation}\label{eq5.22}
 \pi(S(J))=\hat J\,,\quad \text{and } \hat J\subset 2J\,,
 \end{equation}
where the latter holds since we have chosen $h$ very large.

To proceed,  let $K_j$ be the number of cubes $\{I_{j,l}\}$ that contain at least one of the $J_{j,l,k}$. Then, given $\epsilon$, $\tilde N_0\geq N_0$ and $h$ as stated in Lemma \ref{disjointlemma} below, and using
Lemma \ref{lemmaep},
\begin{align}\label{finalest+++}
\sum_l\sum_{k=1}^{k_l}\H^n\left(J_{j,l,k}\right)\leq K_j\ep \Lambda_0^{n+1}(2\Lambda_0^{-1}A^{-j})^{n+1}=K_j\ep 2^{n+1}A^{-j(n+1)}.
\end{align}
Fix $j,l$ and assume that the collection $\{J_{j,l,k}\}_k$ is non-empty.
Since  $J_{j,l,k}\subset I_{j,l}$, it follows that for all $k\in\{1,\ldots,k_l\}$, one
has $\pi(S(J_{j,l,k}))\subset 2I_{j,l}$,
and therefore by \eqref{eq5.21} and \eqref{eq5.22},
\begin{align}\label{finalest++++}
\H^n(\pi(\cup_{k=1}^{k_l}S(J_{j,l,k}))\cap 2I_{j,l})\geq (2\Lambda_1^{-1}R_j)^{n+1}.
\end{align}
Hence, summing the inequality in \eqref{finalest++++} over $l$
\begin{align}\label{finalest+++++}
\sum_l\H^n(\pi(\cup_{k=1}^{k_l}S(J_{j,l,k}))\cap 2I_{j,l})&\geq  K_j(2\Lambda_1^{-1}R_j)^{n+1}\notag\\
&=K_j(M^{-1}\Lambda_1^{-2})^{n+1}2^{n+1}A^{-j(n+1)}.
\end{align}
Combining \eqref{finalest+++} and \eqref{finalest+++++}, and using that for each given $j$,
the fattened cubes
 $\{2I_{j,l}\}_l$ have bounded overlaps,
we see that
\begin{align}\label{lemmaj}
\sum_l\sum_{k=1}^{k_l}\H^n\left(J_{j,l,k}\right)\leq C\varepsilon
(M\Lambda_1^{2})^{n+1}\, \H^n\left(\pi(\cup_l \cup_{k=1}^{k_l}S(J_{j,l,k}))\right)
\end{align}
 for all $j\ge 0$,  where the constant $C=C(n)$. 
 Hence, summing in $j$ we have
 \begin{align}\label{lemmaj+}
\sum_j\sum_l\sum_{k=1}^{k_l}\H^n\left(J_{j,l,k}\right)\leq C\varepsilon (M\Lambda_1^{2})^{n+1}\sum_j
\H^n\left(\pi(\cup_l\cup_{k=1}^{k_l}S(J_{j,l,k}))\right).
\end{align}
To complete the proof we will need the following lemma, Lemma \ref{disjointlemma}.

\begin{lemma}\label{disjointlemma} Let $\ep>0$ be given. Let   $\Lambda_2$,  $\Lambda_3$, $N_0$, be
as in the statement of Lemma \ref{lemmaep}. Then there
exists an integer $\tilde N_0\geq N_0$, depending only on $n,M$, $\Lambda_2$,  $\Lambda_3$, such that if we let $\Lambda_0=\Lambda_2^{\tilde N_0}$,
$\Lambda_1=2^{\tilde N_0}$,  define
$A$ as in \eqref{const3}, and if we restrict $h$ to satisfy $h\geq 2A\Lambda_0\Lambda_1\Lambda_3$, then
$$S(J_{j,l,k})\cap S(J_{j',l',k'})=\emptyset \,, \, \,\forall\,  l,k,l',k' \,, \mbox{ whenever $j\neq j'$.}$$
\end{lemma}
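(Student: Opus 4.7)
The plan is to argue by contradiction. Suppose $j<j'$ and pick $\mathbf{V}\in S(J_{j,l,k})\cap S(J_{j',l',k'})$. Denote by $(\mathbf{X},\mathbf{Y},\mathbf{U},Q)$ the scale-$j$ data, with $Q=Q_{R_j}(\mathbf{U})$ and $R_j=M^{-1}\Lambda_1^{-1}A^{-j}$, and analogously $(\mathbf{X}',\mathbf{Y}',\mathbf{U}',Q')$ the scale-$j'$ data. First I would apply the triangle inequality along the chain
$$\pi(\mathbf{X}')\to\pi(\mathbf{Y}')\to\pi(\mathbf{U}')\to\pi(\mathbf{V})\to\pi(\mathbf{U}),$$
using the narrowness of $\Gamma$ (which gives $|\pi(\mathbf{Y}')-\pi(\mathbf{X}')|\leq h^{-1}A^{-j'+1}$), the corkscrew closeness $|\mathbf{U}'-\mathbf{Y}'|\leq \Lambda_1^{-1}A^{-j'}$, and the definitions of $\hat J,\hat J'$. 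This yields
$$|\pi(\mathbf{X}')-\pi(\mathbf{U})|\leq C\bigl(h^{-1}A+\Lambda_1^{-1}+\Lambda_1^{-2}M^{-1}\bigr)A^{-j'}+\Lambda_1^{-2}M^{-1}A^{-j}.$$
With $A\geq 2M\Lambda_1$, $h\geq 2A\Lambda_0\Lambda_1\Lambda_3$, and $j'\geq j+1$, choosing $\tilde N_0$ large makes the right side smaller than $R_j/4$, so that $\pi(\mathbf{X}')$ lies deep inside $\pi(Q)$.

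Next I would pin down the vertical coordinate $x'_n$. Since $Q\cap \Sigma=\emptyset$ and $\mathbf{X}'\in\Sigma$, we have $\mathbf{X}'\notin Q$, hence $x'_n\notin(u_n-R_j,u_n+R_j)$. Because $\mathbf{X}'\in\G$, the segment $L(\mathbf{X}')$ lies in $\mathcal{U}$; since $Q\cap\mathcal{U}=\emptyset$ and $\pi(\mathbf{X}')\in\pi(Q)$, the case $x'_n\leq u_n-R_j$ would force $L(\mathbf{X}')$ to meet $Q$, a contradiction. Therefore $x'_n\geq u_n+R_j$, so $\mathbf{X}'$ sits above the top face of $Q$; combining this with $y'_n-x'_n\geq A^{-j'}$ and $|\mathbf{U}'-\mathbf{Y}'|\leq \Lambda_1^{-1}A^{-j'}$ then gives $u'_n-R_{j'}>u_n+R_j$ for $\Lambda_1$ large, placing $Q'$ strictly above $Q$.

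The main obstacle is converting this geometric sandwiching into an outright contradiction. My plan is to combine the two segment-avoidance conditions on $\mathbf{V}$: the $S(J)$-segment ending at the bottom face of $Q$, and the longer $S(J')$-segment ending at the bottom face of $Q'$. Since the longer segment crosses the entire vertical range of $Q$ without meeting $\Sigma$, the points $\mathbf{V}$, $Q$ and $Q'$ all lie in a single connected component $C$ of $\mathbb{R}^{n+1}\setminus\Sigma$ with $C\neq\mathcal{U}$. I would then apply the weak time-synchronized two-cube condition at $\mathbf{Y}'$ (at scale $\Lambda_1^{-1}A^{-j'}$) to produce a partner cube $\tilde Q'$ of side $R_{j'}$, synchronized in time with $Q'$, lying in a component distinct from $C$. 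The closeness $\pi(\mathbf{X}')\approx\pi(\mathbf{U})$ together with $L(\mathbf{X}')\subset\mathcal{U}$ should then force $\tilde Q'\subset\mathcal{U}$, and the vertical column through $\tilde Q'$ will meet one of the two segments issuing from $\mathbf{V}$, producing the required $\Sigma$-crossing contradiction. This last step is precisely where the weak (rather than strong) time-synchronization enters, and where the hierarchy $\tilde N_0\gg 1$, $\Lambda_0=\Lambda_2^{\tilde N_0}$, $\Lambda_1=2^{\tilde N_0}$, $h\geq 2A\Lambda_0\Lambda_1\Lambda_3$ must be orchestrated most delicately.
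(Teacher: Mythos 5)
Your opening steps are fine: the chain of triangle inequalities does place $\pi(\mathbf{X}')$ well inside $\pi(Q)$ (given $A\geq 2M\Lambda_1$ and $h$ large), and the dichotomy $|x'_n-u_n|\geq R_j$ together with the exclusion of $x'_n\leq u_n-R_j$ via $L(\mathbf{X}')$ is correct (modulo the small point that $\G$ is defined as a closure, so ``$L(\mathbf{X}')\subset\mathcal{U}$'' requires an approximation argument; this is fixable). The genuine gap is exactly where you acknowledge delicacy: the final step does not produce a contradiction. The partner cube $\widetilde{Q}'$ furnished by the weak time-synchronized two-cube condition at $\mathbf{Y}'$ is only known to lie in a component of $\mathbb{R}^{n+1}\setminus\Sigma$ \emph{different from the one containing} $Q'$; since $\mathbb{R}^{n+1}\setminus\Sigma$ may have many components, nothing places $\widetilde{Q}'$ in $\mathcal{U}$, and the closeness of $\pi(\mathbf{X}')$ to $\pi(\mathbf{U})$ has no bearing on this. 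Even granting $\widetilde{Q}'\subset\mathcal{U}$, there is no reason the ``vertical column through $\widetilde{Q}'$'' meets either segment issuing from $\mathbf{V}$: those segments sit over the single projection point $\pi(\mathbf{V})$, whereas $\widetilde{Q}'$ has side length only $R_{j'}$ but may be displaced from $\mathbf{Y}'$ (hence from $\pi(\mathbf{V})$) by as much as $\Lambda_1^{-1}A^{-j'}=MR_{j'}$, so $\pi(\mathbf{V})\in\pi(\widetilde{Q}')$ is not forced. Consequently no $\Sigma$-crossing is exhibited, and the configuration you reach ($x'_n\geq u_n+R_j$, with the long segment from $\mathbf{V}$ threading through $Q$ and stopping just below $Q'$) is, as far as your argument shows, internally consistent: to contradict membership in $S(J_{j,l,k})\cap S(J_{j',l',k'})$ you would need a point of $\Sigma$ \emph{on the vertical line through} $\pi(\mathbf{V})$ at a height strictly below $u'_n-R_{j'}$ (or below $u_n-R_j$), and your data never produce one.

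There is also a structural mismatch with the paper. The paper does not prove this lemma from any corkscrew or two-cube hypothesis at all: its proof is the observation that the statement is Lemma 2.3 of \cite{NS}, established there via Lemma 2.4 of \cite{NS}, and that neither of those proofs uses a two-cube condition (synchronized, weak, or otherwise), so the argument transfers verbatim once the cubes $Q$, $\hat J$, $S(J)$ have been constructed. Your plan inverts this logic by making the weak synchronization the engine of the cross-scale disjointness, which is precisely the dependence the authors emphasize is absent, and it is exactly at that juncture that your proposal has no complete step. To salvage a self-contained proof you would need to replace the appeal to the two-cube condition by a purely geometric argument (as in \cite{NS}) locating $\Sigma$ on the vertical line through $\pi(\mathbf{V})$ and playing the two visibility conditions against each other quantitatively; as written, the proposal does not do this.
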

\begin{proof} This is Lemma 2.3 in \cite{NS}, 
whose proof relies in turn on Lemma 2.4 in \cite{NS}.
Neither of the proofs of these two Lemmata 
relies on a two cube condition, 
 hence Lemma \ref{disjointlemma} 
 generalizes immediately to our setting.
\end{proof}

We can now use  Lemma \ref{disjointlemma} to complete the proof of \eqref{finalest++} and hence the proof of Proposition \ref{mainprop}. Recall that  by definition, $$S(J_{j,l,k})\subset \S= (\Sigma\cap Q_{2M}(0,0))\cup (I^0\times\{x_n=-A\}).$$
Hence, using that $\H^n$ and $\Hpn$ are the same on a hyperplane parallel to the $t$-axis,
and that (parabolic)
Hausdorff measure does not increase under a
projection (see Remark \ref{r-measures} (ii) and (iii)), and then
Lemma \ref{disjointlemma}, we deduce that
\begin{multline}\label{aa1}
\sum_j \, \H^n(\pi(\cup_l \cup_{k=1}^{k_l}S(J_{j,l,k}))) \,
\leq\,
\sum_j \, \Hpn(\cup_l \cup_{k=1}^{k_l}S(J_{j,l,k})) \\
\le \,\Hpn\big(\Sigma\cap Q_{2M}(0,0) \big)+\H^n\left(I^0\times\{x_n=-A\}\right)\,
\leq \, C,
\end{multline}
since $\Sigma$ is ADR.
Together \eqref{lemmaj+} and \eqref{aa1} imply the bound
\begin{eqnarray}\label{finalest+++a}
\sum_j\sum_l\sum_{k=1}^{k_l}\H^n\left(J_{j,l,k}\right)\leq C\varepsilon (M\Lambda_1^{2})^{n+1},
\end{eqnarray}
where $C=C(n)$, $1\leq C<\infty$. Let now $\varepsilon$ be defined through the relation
\begin{eqnarray}\label{efix}
C\varepsilon (M\Lambda_1^{2})^{n+1}=\gamma/2.
\end{eqnarray}
Then $\ep=\ep(n,M,\Lambda_1,\gamma)=\ep(n,M,\gamma)=\ep(n,M)$ and we see that Lemma \ref{mainprop} holds with $h=2A\Lambda_0\Lambda_1\Lambda_3$ and, by construction, $h=h(n,M)$. In particular, the proof of Proposition \ref{mainprop} is now complete.

\section{The proof of  Theorem \ref{th2} and Corollary \ref{cor2}}\label{sec5}
In this section we give the proof of Theorem \ref{th2} when $\diam\Sigma=\infty$, $T_0=-\infty$ and $T_1=\infty$,
followed by a sketch of the refinements to this argument needed to prove Corollary \ref{cor2}.
The proof will be a combination of ideas in \cite{HLN1} and \cite{DS}.

In the following $C$ will denote a positive constant satisfying $1\leq C<\infty$. We write  $c_1\lesssim c_2$ if $c_1/c_2$ is bounded from above by a positive constant depending at most on $n$, $M$ and $\gamma_1$ if not otherwise stated. We write $c_1\sim c_2$ if $c_1\lesssim c_2$ and $c_2\lesssim c_1$.

\begin{proof}[Proof of  Theorem \ref{th2}] Let  $\Sigma$ be a closed subset of $\R^ {n+1}$ which is parabolic ADR with constant $M$.
Assume that $\Sigma$ is
parabolic UR with constants $(M,\|\nu\|)$, and that
and that $\Sigma$ satisfies a 2-sided corkscrew condition as in Definition \ref{corkscrews}.  Then by
Theorem \ref{urtscorkscrews},
$\Sigma$ satisfies the weak synchronized two cube condition in the sense of Definition \ref{tsynk} with $\gamma_1\in (0,1)$.  If necessary, we shrink $\gamma_1$ slightly so that any rotation $\varrho(Q)$ of a corkscrew cube $Q$
does not intersect $\Sigma$, and in fact retains the Whitney property that
$\diam\left(\varrho(Q)\right)\approx\diam(Q) \approx \dist\left(\varrho(Q),\Sigma\right)$, with uniform implicit constants.

 To start the proof of Theorem \ref{th2}, let $(X,t)\in \Sigma$ and $R>0$. By Theorem \ref{th1} there exists, after possibly a rotation in the spatial variables, a coordinate system and  Lip(1,1/2) function ${\psi}^\ast$
  with constant $b^* = b^*(n,M)$ such that if we let $\pi$ denote the orthogonal projection onto the plane $\{(y,y_n,s)\in\mathbb R^{n-1}\times\mathbb R\times\mathbb R:y_n=0\}$, then
\begin{align}\label{aa1+}
\sigma(F)\geq \mathcal{H}^n(\pi(F)) \geq \epsilon R^{n+1}\mbox{ where } F := \Sigma_{{\psi}^\ast}\cap \Delta(X,t,R)
\end{align}
and
\begin{align*}
\Sigma_{{\psi}^\ast} := \{(y,y_n,s) \in \mathbb{R}^{n-1} \times \mathbb{R} \times \mathbb{R}: y_n = {\psi}^\ast(y,s)\}.
\end{align*}

To prove Theorem \ref{th2} we need to invoke  the Carleson measure condition used in the very definition of parabolic uniform rectifiability.  Let
\[  f ( Z, \tau ) = \int_0^{100R}  \ga ( Z, \tau, r ) \,
r^{ - 1 } \, \d r, \, ( Z, \tau ) \in \Sigma.  \]
Then, using \eqref{1.9} we see that
\[ \iint_{ \Delta( X, t,100R)} f ( Z, \tau )\d\sigma
( Z, \tau ) \, \leq \, \| \nu \| \, (100 R)^{ n + 1 }.\]
  Using this and weak estimates we see that if  $ A=1000\epsilon^{-1}$, then
\begin{eqnarray} \si \left( \{  ( Z, \tau  ) \in  \Delta(X,t,100R) :  f
( Z, \tau) \geq \, A^{ n + 1} \, \| \nu \|  \} \right)&\leq& ( 100 R/A)^{ n + 1 } \notag\\
&\leq& (\epsilon R/10)^{ n + 1}.\end{eqnarray} Using this inequality, \eqref{aa1+} and the fact that Hausdorff measure does not
increase under a projection,  we deduce
the existence of a closed set $ F_1 = F_1 ( A ) $ with $ F_1
\subset F,  $ such that
\begin{eqnarray}\label{2.13}  f ( Z, \tau  ) \leq \, A^{ n + 1} \,
\| \nu \|, \qquad (Z, \tau ) \in F_1,  \end{eqnarray} and
\begin{eqnarray}\label{2.14}  \H^n( \pi( F_1)) \geq \frac {\epsilon}2 R^{n + 1 }. \end{eqnarray}

We construct the approximating graph by extending ${\psi}^\ast$ off $\pi( F_1)$. To do this we again identify $\mathbb R^{n-1}\times \{0\}\times\mathbb R$ with $ \mathbb R^{n}$, and  put
\[ I_r ( z, \tau  ) = \{ ( y, s ) \in \mathbb R^{n} :
| y_i   - z_i   | < r, \, i = 1,\dots, n - 1, \, \,   | s - \tau |
< r^2\}, \]
whenever $( z, \tau ) \in \mathbb R^{n}$, $r > 0$. Let  $ \{
\bar I_i = \overline {I_{r_i} ( \hat x_i, \hat t_i )} \} $ be a Whitney
decomposition of $ \mathbb R^{n} \sem  \pi( F_1) $ into ($n$-dimensional parabolic) cubes, such that
 $ I_i \cap I_j = \emptyset, i \not = j, $ and
\begin{eqnarray}\label{2.15} 10^{ - 10n }  d ( I_i, \pi( F_1) )  \leq  r_i   \leq 10^{ -
8n } d ( I_i,
\pi( F_1)).  \end{eqnarray}
Let $ \{ v_i \} $ be a partition of unity adapted to $ \{ I_i \}$, i.e.,
\begin{eqnarray}\label{2.16}
(a)&&\sum v_i
\equiv 1  \mbox{ on } \mathbb R^{n} \sem \pi( F_1),\notag\\
(b)&& v_i  \equiv 1  \mbox{ on }  I_i
 \mbox{ and }   v_i \equiv 0  \mbox{ in } \mathbb R^{n} \sem \overline{ I_{2
r_i } ( \hat x_i, \hat t_i )} \mbox{ for all } i,\notag\\
(c)&&v_i  \mbox{ is infinitely differentiable on $\mathbb R^{n}$ with}\notag\\
&& r_i^{ - l} \, | \frac{\partial^l}{\partial x^l }  v_i |
\, + \,    r_i^{ - 2 l}  | \frac{\partial^l}{\partial t^l }  v_i |  \,  \leq
\, c ( l, n ) \mbox{ for } l = 1, 2, \dots.
\end{eqnarray}
In ($c$), $
\frac{\ar^l}{\ar x^l } $ denotes an arbitrary partial derivative with respect to the space variable $ x $ and of order $l$. Next,
for each $ i $ we fix $ ( x'_i, t'_i ) \in \pi( F_1) $ with
\begin{equation}\label{eq6.8}
 \rho_i := d ( ( x_i', t_i' ),  I_i ) = d ( \pi( F_1), I_i ) \approx r_i \approx \diam(I_i),
\end{equation}
where the last two equivalences are standard properties of Whitney cubes.
We set   $  \La \, = \{ i :
\bar I_i \cap \overline {I_{2R} ( x, t ) }\not = \emptyset \},$
where $(y,s)\cong(y,0,s)$ is the projection of $(Y,s)$ onto $ \mathbb R^n\cong \R^{n-1}\times \{0\}\times \R$.  We now let
\begin{eqnarray}\label{2.17}
\,\, \psi ( y, s )  \, = \, \left\{ \bea{l} \psi^\ast ( y, s ), ( y, s )
\in \pi( F_1),  \\ \\
 { \ds \sum_{i \in \La} } \, \big(   \psi^\ast  ( x_i', t_i' )  \,+ \, \mu b^\ast  \rho_i
\,
\big) \, v_i ( y, s ) , \, \, ( y, s ) \in \mathbb R^{n} \sem \pi( F_1)\,, \ea
\right. \end{eqnarray}
where $\mu$ is a non-negative constant which may be taken equal to 0, in the case of
Theorem \ref{th2}, and which will be chosen sufficiently large in the case of
Corollary \ref{cor2}.
Then, $ \psi \equiv 0 $ on $
\mathbb R^{n}  \sem Q_{4R} ( X, t ) $, and 
\begin{eqnarray}\label{aa1+a}
 \H^{n}(\pi(F_1)) \geq \frac \epsilon2 R^{n + 1 }\,,\quad F_1\subset \Sigma_{\psi}\cap\Delta(X,t,R),
 \end{eqnarray}
 where
  \begin{eqnarray*} 
 \Sigma_{\psi}:=\{(y,y_n,s)\in\mathbb R^{n-1}\times\mathbb R\times\mathbb R:y_n=\psi(y,s)\}.
 \end{eqnarray*}

  We intend to prove that the function $\psi$ is a regular parabolic Lip(1,1/2)  function with constants
 $b_1=b_1(n,M,\tilde M)$, $b_2=b_2(n,M,\tilde M)$.

Since $ \psi^\ast $ is a Lip(1,1/2) function with constant $b^*=b^*(n,M)$,
one can use  \eqref{2.15}-\eqref{2.17} and a standard Whitney extension argument (see \cite[Ch. VI]{St})
to conclude that \eqref{1.1} holds with $ b_1 $ replaced by $ C b^\ast$.  To verify this, the more
delicate case occurs  when  $ (y,s) $ is in the
closure of two cubes say $ I_i, I_j $ with $ i \in \La, j \not \in
\La. $  However this case follows easily from the fact  that $ |
\psi^\ast | \leq c b^\ast R $ and $ | \ar v_k/\ar y_l | (y,s) \leq c/R $
for $ 1\leq l \leq n - 1$, $k\in\{ i, j\}$. Hence it remains only to prove that
 \begin{eqnarray} \label{1.7aa}
 \|D_{1/2}^t\psi\|_*\leq b_2\mbox{ for some }b_2=b_2(n,M,\tilde M).
\end{eqnarray}
Let $\beta_\psi$, $\nu_\psi$, be as in the
statement Definition \ref{def1.UR-} but with $\Sigma$ replaces by
$\Sigma_{\psi}$ as the underlying closed set. To prove \eqref{1.7aa} the key step is to prove that
\begin{eqnarray} \label{1.7aa+}\| \nu_\psi\| \lesssim ( 1+ \| \nu \| ).
\end{eqnarray}
Once \eqref{1.7aa+} is established, one can repeat the proof in \cite[pp 368-373]{HLN1} to
conclude that \eqref{1.7aa} holds with $b_2\approx  1+ \| \nu \|$,
thus completing the proof of Theorem \ref{th2}.

It therefore remains to give the proof of \eqref{1.7aa+}.
To start, we make an elementary observation of a
geometric nature. Indeed, we
first note that 
\begin{equation}\label{2.23} d (  Y, s, \Sigma ) \lesssim \, (1+b^*) 
\, d \big( y, s, \pi ( F_1 ) \big)\,,\quad \forall\,( Y, s ) \in \Sigma_{\psi}\cap \overline{Q_{100R} ( X, t )}\,.
\end{equation}
Indeed, this inequality is trival when $ ( Y, s) \in F_1$, so assume $ ( Y, s ) = ( y, s,\psi^\ast ( y, s) ) $
with $ ( y, s ) \in \bar I_i $ for some $i$.  Then $d ( y, s, \pi ( F_1 ) )\approx \rho_i
\approx d \big( (y,s),( x_i', t_i' )
\big) $, by 
\eqref{eq6.8}.
Consequently, since $\psi^\ast$ is Lip (1,1/2) with constant $b^*$,
\begin{equation}\label{2.23a}
d (   Y, s   , \Sigma) \leq d\big(( y, s,\psi^\ast ( y, s) ), ( x_{i}', t_{i}',\psi^\ast ( x_{i}', t_{i}' )\big)
\lesssim (1+b^*)\rho_i.
\end{equation}
This proves \eqref{2.23}.

In the following $K\gg 1 $ is a degree of freedom.
Given $(Z,\tau,r)\in \Sigma\times (0,\infty)$ we let $P_{(Z,\tau,r)}$ be a time-independent plane which realizes
$\beta(Z,\tau,Kr)$.

Consider
\begin{align}\label{case1}
\mbox{$(Z,\tau)\in F_1$ and $r>0$ such that $\overline{Q_r(Z,\tau)}\subset Q_{80R}(X,t)$}.
\end{align} Given $i\in\Lambda$, let $(X_i',t_i')\in F_1$ be such that $\pi(X_i',t_i')=(x_i',t_i')$ where
$ ( x'_i, t'_i ) \in \pi( F_1) $ realizes the distance from $I_i$ to $\pi( F_1)$.  Let $\mathcal{Q}_i$ be a
dyadic cube on $\Sigma$ (see Definition \ref{def.dyadiccube}) containing $(X_i',t_i')$  with
$\ell(\mathcal{Q}_i)\approx \rho_i$. Furthermore, let
	\begin{align*}
	&\Gamma_i= \{(y,\psi(y,s),s): (y,s) \in \overline{I_i}\}.
	\end{align*}
Then
\begin{equation}\label{eq6.17}
\sigma(\Gamma_i)\approx \rho_i^{n+1},
\end{equation}
(here we are using $\sigma$ to denote the surface measure both on $\Sigma$ and on $\Sigma_\psi$), and
\begin{equation}\label{eq6.18}
\rho_i=d( x'_i, t'_i ,I_i)=d\big(I_i,\pi(F_1)\big) \sim \ell(I_i) \sim \ell(\mathcal{Q}_i) \sim d(X_i',t_i',\Gamma_i)
\gtrsim d(\mathcal{Q}_i,\Gamma_i)\,,
\end{equation}
where in the next-to-last step we have used that $\Sigma_\psi$ is a Lip(1,1/2) graph. Using this notation we
see that
\begin{align}\label{aeq1}
\beta_\psi^2( Z,\tau, r )&\lesssim r^{-(n+1)}\iint_{\Sigma_\psi\cap Q_r( Z,\tau)}\biggl (\frac{d(Y,s,P_{( Z,\tau, r)})}r\biggr )^2\,\d\sigma(Y,s).
\end{align}
Introducing
\begin{align}\label{aeq2}
T( Z,\tau, r )&:= r^{-(n+1)}\iint_{F_1\cap Q_r( Z,\tau) }\biggl (\frac{d(Y,s,P_{( Z,\tau, r)})}r\biggr )^2\,\d\sigma(Y,s),\notag\\
T_i( Z,\tau, r )&:=r^{-(n+1)}\iint_{ \Gamma_i\cap Q_r( Z,\tau) }\biggl (\frac{d(Y,s,P_{( Z,\tau, r)})}r\biggr )^2\,\d\sigma(Y,s),
\end{align}
we can continue the estimate in \eqref{aeq1} and conclude that
\begin{align*}%\label{aeq3}
\beta_\psi^2( Z,\tau, r )&\lesssim  T( Z,\tau, r )+\sum_{i\in I( Z,\tau, r )}T_i( Z,\tau, r ),
\end{align*}
where $ I( Z,\tau, r ):=\{i:\ Q_r( Z,\tau)\cap \Gamma_i \neq\emptyset\}$. By construction
\begin{align}\label{aeq3g}
 T( Z,\tau, r )\lesssim \beta^2( Z,\tau, Kr ).
\end{align}
To handle the sum over $i\in I( Z,\tau, r )$ we will combine arguments from \cite{DS} and \cite{HLN1}.

Let  $ i\in I( Z,\tau, r )$. Then
\begin{equation}\label{eq3a}
\rho_i \lesssim  r\mbox{ and } d(Z,\tau, \mathcal{Q}_i)\lesssim r\,.
\end{equation}
Choose a $(Z_i,\tau_i)\in \overline{\mathcal{Q}}_i$ which minimizes the distance from $\overline{\mathcal{Q}}_i$ to $P_{( Z,\tau, r)}$,
i.e.
\begin{equation}\label{eq3a+} h_i:=  \inf_{(Y,s)\in \mathcal{Q}_i}d(Y,s,P_{( Z,\tau, r)})= d(Z_i,\tau_i,P_{( Z,\tau, r)}).
\end{equation}
For $(Z_i,\tau_i)\in \overline{\mathcal{Q}}_i$ fixed as above, choose $\mathbf{Z}_{( Z,\tau, r)}\in P_{( Z,\tau, r)}$  so that
\begin{equation}\label{eq3a++}
h_i= d(Z_i,\tau_i,P_{( Z,\tau, r)})=d(Z_i,\tau_i,\mathbf{Z}_{( Z,\tau, r)}).
\end{equation}
Using this notation  and the triangle inequality, we write
\begin{align}\label{aeq2a}
T_i( Z,\tau, r )\lesssim \tilde T_i( Z,\tau, r )+\hat T_i( Z,\tau, r ),
\end{align}
where
\begin{align*}
\tilde T_i( Z,\tau, r )&:=r^{-(n+1)}\iint_{ \Gamma_i\cap Q_r( Z,\tau) }\biggl (\frac{d(Y,s,Z_i,\tau_i)}r\biggr )^2\,\d\sigma(Y,s),\notag\\
\hat T_i( Z,\tau, r )&:=r^{-(n+1)}\iint_{ \Gamma_i\cap Q_r( Z,\tau) }\biggl (\frac{d(Z_i,\tau_i,\mathbf{Z}_{( Z,\tau, r)})}r\biggr )^2\,\d\sigma(Y,s).
\end{align*}
We then have
\begin{align}\label{aeq2b}
\tilde T_i( Z,\tau, r )\lesssim (\rho_i/r)^{n+3}\,,\qquad \hat T_i( Z,\tau, r )\lesssim (\rho_i/r)^{n+1}(h_i/r)^{2},
\end{align}
where we have used \eqref{2.23} in the first estimate. Combining \eqref{aeq3g} and \eqref{aeq2b} we can conclude that if
$(Z,\tau ,r)$ is as in  \eqref{case1}, then
\begin{equation}\label{aeq3}
\beta_\psi^2( Z,\tau, r )\lesssim  \beta^2( Z,\tau, Kr )\,+\sum_{i\in I( Z,\tau, r )}\left(\frac{\rho_i}r \right)^{n+3}
\,+\sum_{i\in I( Z,\tau, r )} \left(\frac{\rho_i}r \right)^{n+1}\left(\frac{h_i}r \right)^{2}.
\end{equation}

We first treat 
the last term in \eqref{aeq3}, following the argument in \cite[pp 86-87]{DS}. Given $i\in I( Z,\tau, r )$ we set
$J(i):= \{j:\, \mathcal{Q}_j \subset \mathcal{Q}_i\}$,
and define
$$\mathcal{N}_i(Y,s):=    \sum_{j\in J(i)} 1_{\mathcal{Q}_j}(Y,s)$$ for $(Y,s)\in\Sigma$. Then, as in \cite{DS} we have
\begin{equation}\label{eq4uu}
\bariint_{\mathcal{Q}_i} \mathcal{N}_i\,\d\sigma \lesssim 1,
\end{equation}
and
\begin{equation}\label{eq3uu}
\sum_i \mathcal{N}_i(Y,s)^{-2} 1_{\mathcal{Q}_i}(Y,s) \lesssim 1\,.
\end{equation}
We sketch the proof of the latter estimate, as follows.  If $\mathcal{N}_i(Y,s)=\infty$, then trivially
$\mathcal{N}_i(Y,s)^{-2} =0$.  Otherwise, if $\mathcal{N}_i(Y,s)<\infty$, then there are only finitely many terms in the sum defining $\mathcal{N}_i(Y,s)$.
Note also that for each $k$, there is at most one $\mathcal{Q}_j\in\mathbb{D}_k$ such that $(Y,s)\in \mathcal{Q}_j$.  Thus, $\mathcal{N}_i(Y,s)$
equals the number of dyadic generations $k$ such that there is a cube $\mathcal{Q}_j\in \mathbb{D}_k$, with
$(Y,s)\in \mathcal{Q}_j\subset \mathcal{Q}_i$.  For the smallest $\mathcal{Q}_i$ containing $(Y,s)$, we have $\mathcal{N}_i(Y,s)=1$, for the next smallest
$\mathcal{N}_i(Y,s)=2$, etc., so that the sum in \eqref{eq3uu} is controlled by $\sum_{k=1}^\infty k^{-2}$.

Following \cite{DS}, we write
\begin{align*}
\rho_i^{n+1}  h^2_i = \rho_i^{n+1}  h^2_i
\left(\bariint_{\mathcal{Q}_i}d\sigma \right)^3 &=\rho_i^{n+1}  h^2_i
\left(\bariint_{\mathcal{Q}_i}\mathcal{N}_i^{-2/3} \mathcal{N}_i^{2/3}\d\sigma \right)^3\,.
\end{align*}
By H\"older's inequality, \eqref{eq3a+}, and  \eqref{eq4uu}, we then
deduce that
\begin{align*}
\rho_i^{n+1}  h^2_i  &\lesssim \iint_{\mathcal{Q}_i} (d(Y,s,P_{( Z,\tau, r)}))^2\,\mathcal{N}_i(Y,s)^{-2} 1_{\mathcal{Q}_i}(Y,s) \,\d\sigma(Y,s)
\, \left(\bariint_{\mathcal{Q}_i} \mathcal{N}_i \,\d\sigma\right)^2\\
&\lesssim  \iint_{\mathcal{Q}_i} (d(Y,s,P_{( Z,\tau, r)}))^2\,\mathcal{N}_i(Y,s)^{-2} 1_{\mathcal{Q}_i}(Y,s) \,\d\sigma(Y,s).
\end{align*}
Hence,  summing over $i$, using \eqref{eq3a} and \eqref{eq3uu}, we obtain
\begin{multline}\label{aeq3ha}
\sum_{i\in I( Z,\tau, r )} (\rho_i/r)^{n+1}(h_i/r)^{2}\\
\lesssim r^{-n-3}\iint_{\Sigma\cap Q_{Kr}(Z,\tau)}(d(Y,s,P_{( Z,\tau, r)}))^2 \,\d\sigma(Y,s)
\,\approx\, \beta^2( Z,\tau,Kr),
\end{multline}
provided that $K$ is chosen large enough, depending on the implicit constants in \eqref{eq3a}. In particular,
\begin{align}\label{aeq3again}
\beta_\psi^2( Z,\tau, r )&\lesssim  \beta^2( Z,\tau, Kr )+\sum_{i\in I( Z,\tau, r )}(\rho_i/r)^{n+3}
\end{align}
for all $(Z,\tau)\in F_1$ and $r>0$ such that $\overline{Q_r(Z,\tau)}\subset Q_{80R}(X,t)$.

For given $ ( \hat Z, \hat \tau ) \in \Sigma_\psi $ and $\hat r>0$ such that
with $ Q_{\hat r} ( \hat Z, \hat \tau ) \subset  Q_{20R} ( X, t
), $ we integrate \eqref{aeq3again} over  $ F_1 \cap Q_{\hat r} ( \hat Z,
\hat \tau ). $ If $ F_1 \cap Q_{\hat r} ( \hat Z, \hat \tau) =
\emptyset$ the following inequality is trivially true. Using \eqref{aeq3again}
\begin{align}\label{aeq3again+}
\nu_\psi( F_1 \cap Q_{\hat r} ( \hat Z, \hat \tau ) \times (0, \hat r))
 &=\int_0^{\hat r} \iint_{ F_1 \cap Q_{\hat r} ( \hat Z,
\hat \tau  ) }  \, \beta_\psi^2( Z,\tau,r ) \,\d\sigma ( Z,\tau) \, r^{ - 1 } \, \d r\notag\\
&\lesssim\, \nu( F_1 \cap Q_{\hat r} ( \hat Z, \hat \tau ) \times (0, K\hat r))\notag\\
&+ \,\,\,\int_0^{\hat r} \iint_{ F_1 \cap Q_{\hat r} ( \hat Z,
\hat \tau  ) }  \sum_{i\in I( Z,\tau, r )}(\rho_i/r)^{n+3}\,\d\sigma ( Z,\tau) \, r^{ - 1 } \, \d r\notag\\
& =: I + II.
\end{align}
By our assumptions, $I\lesssim \| \nu \| \,\hat r^{n+1}$.
Note that $ r_i' ( Z,\tau ) := d ( Z,\tau ,\Ga_i ) + \rho_i \,\lesssim \,r $, by \eqref{eq6.18} and \eqref{eq3a}.
Thus, summing and interchanging the
order of integration, we see that
\begin{align}\label{aeq3again++}
II\,&\leq \, \iint_{ F_1 \cap Q_{\hat r} ( \hat Z,
\hat \tau  ) }  \sum_{i\in I( \hat Z,\hat \tau, C\hat r )} \biggl (\int_{cr'_i ( Z,\tau ) }^{\hat r}\, (\rho_i/r)^{n+3}r^{ - 1 } \, \d r\biggr ) \,\d\sigma ( Z,\tau)\notag \\
&\lesssim \sum_{i\in I( \hat Z,\hat \tau, C\hat r )}  \iint_{ F_1 \cap Q_{\hat r} ( \hat Z,
\hat \tau  ) }   \left(\frac{\rho_i}{r'_i( Z,\tau )}\right)^{n+3} \,\d\sigma ( Z,\tau) \notag\\
&\lesssim  \sum_{i\in I( \hat Z,\hat \tau, C\hat r )} \rho_i^{n+1}\,\,\lesssim \,\,\hat r^{n+1}.
\end{align}
Hence, combining  \eqref{aeq3again+} and \eqref{aeq3again++}, we have proved that
\begin{equation}\label{aeq3again+++}
 \nu_\psi( F_1 \cap Q_{\hat r} ( \hat Z, \hat \tau ) \times (0, \hat r))\,\lesssim\, ( 1+ \| \nu \| )\,\hat r^{n+1}
\end{equation}
for all $ ( \hat Z, \hat \tau ) \in \Sigma_\psi $ and $\hat r>0$ such that
 $ Q_{\hat r} ( \hat Z, \hat \tau ) \subset  Q_{20R} ( X, t)$.

Similarly, by repeating the argument between displays (2.30) and (2.32) in \cite{HLN1} we first  deduce that
\begin{align}\label{aeq3again++++}
 \nu_\psi( (\Sigma_\psi\setminus F_1 )\cap Q_{\hat r} ( \hat Z, \hat \tau ) \times (0, \hat r))\lesssim ( 1+ \| \nu \| )\hat r^{n+1},
\end{align}
and then, using also \eqref{aeq3again+++}, we can conclude that
\begin{align}\label{aeq3again+++++}
 \nu_\psi( \Sigma_\psi\cap Q_{\hat r} ( \hat Z, \hat \tau ) \times (0, \hat r))\lesssim ( 1+ \| \nu \| )\hat r^{n+1},
\end{align}
whenever $ ( \hat Z, \hat \tau ) \in \Sigma_\psi $ and $\hat r>0$ are such that
with $ Q_{\hat r} ( \hat Z, \hat \tau ) \subset  Q_{20R} ( X, t
)$. The other cases can be handled by the observations in display (2.33) in \cite{HLN1}. We omit further details and claim
that the proof of \eqref{1.7aa+}, and hence the proof of Theorem \ref{th2}, is complete.
\end{proof}

\begin{proof}[Proof of Corollary \ref{cor2}]
Let $(\hat X,\hat t)\in \Omega$.  We repeat the proof of Corollary \ref{cor1} to construct a Lip(1,1/2) function,
which we now call $\psi^*$, along with the local graph subdomain $\tilde \Om=: \tilde\Om_{\psi^*}\subset \Om$,
defined as in \eqref{eq.tom} but with $\psi^*$ in place of $\psi$, above the
planar cube $I_*=\pi(Q_*)$ (see \eqref{eq.istar}).  With $\psi^*$ in hand, and using
\eqref{eq.tbp}, we repeat
the proof of Theorem \ref{th2}, with $\Delta(X,t,R)$ replaced by
$\Delta_*:= Q_* \cap\Sigma$, and thus $R\approx d/(Nh)$ (we recall that $Q_*$ is centered on
$\Sigma$; see Remark \ref{r5.2})).
Specifically, we construct $\psi$ as in
\eqref{2.17}, now with $\mu=N^{1/2}$, where $N$ is the suitably large constant in the proof of
 Corollary \ref{cor1}, and of course with $b^*=h$.  By the proof of Theorem \ref{th2}, $\psi$ is a regular Lip(1,1,2)  (i.e., $RPLip$)
 graph, as desired.
We now define $\tilde \Om=\tilde \Om_\psi$ again as in \eqref{eq.tom}, this time with
 respect to $\psi$.
 To obtain the conclusion of Corollary \ref{cor2}, it remains only to verify that
 $\tilde \Om_\psi\subset \Om$, and that the corkscrew condition \eqref{eq.tcs} holds for
 $\tilde\Om= \tilde \Om_\psi$.
 The former is easy: by construction (see \eqref{2.17}),
 $\psi^* \leq \psi$, pointwise in $I_*$, provided that $N$ (hence also
  $\mu=N^{1/2}$) is chosen large enough.  Thus,  $\tilde \Om_\psi \subset \tilde\Om_{\psi^*}$,
  and we already know that  in turn, $ \tilde\Om_{\psi^*}\subset \Om$.

  Let us now verify  that  \eqref{eq.tcs} holds for
 $\tilde\Om= \tilde \Om_\psi$.
    To this end, observe first that in the proof of Theorem \ref{th2}, by construction
  $\psi$ has compact support in a ball of parabolic radius $CR$, and
  that the planar Whitney cubes $I_i$ have ``length"
  $r_i\approx \rho_i \lesssim R$. In the present setting, this means that $\rho_i \lesssim d/(Nh)$
  for all $i$.  Since we have chosen $\mu=N^{1/2}$, and since $b^*=h$, this means that
  by construction (see \eqref{2.17}), applying \eqref{eq.psiest} to $\psi^*$, we have
 \begin{multline*}
 \sup_{(y,s) \in 100I_* }\psi(y,s) \leq \sup_{(y,s) \in 100I_* }\psi^*(y,s) + CN^{1/2} h N^{-1} h^{-1} d \\
 \leq \left (\kappa - c(n) + CN^{-1} +CN^{-1/2}\right) d \leq  (\kappa - c(n)/2)) d\,,
\end{multline*}
for $N$ large enough,
and therefore with $c_1=c(n)/2$, we have
\begin{equation}\label{eq6.39}
 \hat{x}_n - \sup_{(y,s) \in 100I_* }\psi(y,s) \geq c_1 d 
 \,.
\end{equation}
Combining the latter estimate with the definition of
 $\tilde\Om= \tilde \Om_\psi$ (see  \eqref{eq.tom}), we find that
 \eqref{eq.tcs} holds for $\tilde \Om_\psi$, provided that $N$ is chosen large enough.
\end{proof}

\section{Two Counterexamples}\label{sec6}

In \cite{NS}, the authors prove that a parabolic ADR set satisfying a synchronized two cube condition contains big
pieces of Lip(1,1/2) graphs.  It is quite easy to see that any set satisfying
a synchronized two cube condition is time-symmetric ADR.
It turns out that this implication is not reversible.  In particular, in light of Theorem \ref{tftbtscorkscrews}, the weak
time-synchronized two cube condition is {\em strictly} weaker than its strong counterpart.
 In this section, we construct two examples of time-symmetric ADR sets satisfying a
 (two-sided) corkscrew condition, which do
  {\it not} satisfy a synchronized two cube condition.
Moreover, our examples are also parabolic UR.
Importantly, these examples show that Theorems \ref{th1} and \ref{th2}, and Corollaries \ref{cor1} and  \ref{cor2}, are
strict improvements of the corresponding results in
\cite{NS}\footnote{As noted in the introduction, Theorem \ref{th2} and Corollary  \ref{cor2}
also improve the corresponding results in \cite{NS} in a further sense, namely that in the present work we
have removed the size constraint on the p-UR constants that was implicit in \cite{NS}.}.

The first example is rather simple:  let $\Omega$ be the open region between the two graphs $\Gamma^\pm
=\{(\psi^\pm(t),t\}\subset \R^2$, where
\[\psi^\pm(t):=  \pm |t|^{1/2} \pm 1\,,\quad t\in \R\,.\]
Clearly, $\Om$ is connected. Moreover,
it is easy to check that the boundary $\Sigma =\Gamma^+ \cup \Gamma^-$ is time symmetric ADR
(indeed, each of $\Gamma^\pm$ is a Lip(1,1/2) graph), and
satisfies the two sided corkscrew condition in the sense of Definition \ref{corkscrews+}
(i.e., with one point interior to $\Omega$ and one exterior).
On the other hand, $\Sigma$ fails to have {\em synchronized }
corkscrew points (one interior to $\Omega$ and one exterior)
in the sense of Definition \ref{tsynk+}, at $t=0$ (i.e., at the boundary points $(\pm1,0)$),
since the interior corkscrew points get pushed to the side at large scales.
Moreover, one may readily verify that each of the graphs $\Gamma^\pm$ is {\em regular} Lip(1,1/2),
and thus $\Sigma$ is p-UR, by checking the regularity criterion of \cite[Theorem 3.3]{Stz}, namely that each of
$\psi^\pm$ satisfies the Carleson measure condition
\[\sup_{a\in \R,\, h>0}\, \frac1h \int_{a-h}^{a+h} \int_{a-h}^{a+h} \frac{|\psi(t)-\psi(s)|^2}{|t-s|^2} \d s \d t \, \leq \, C\,.\]

We observe that the construction above does not provide
a counter-example to the time-synchronized 2-cube condition
in the weaker sense of Definition \ref{tsynk}, in which one merely insists upon the existence
of time-synchronized cubes in separate connected components of $\ree\setminus \Sigma$ (not necessarily interior to
one designated component).  Our next example and construction addresses this issue. The construction will be set in $\mathbb{R}^2$.

To start the construction in $\mathbb{R}^2$ we in this example will identify the horizontal axis as the time axis, and the vertical axis as the spatial axis. However, we will continue to denote points by $(X,t)$ where $X$ refer to the spatial coordinate and $t$ will refer to the time coordinate. 
Starting at $(0,0)$, we draw two line segment with slopes $\pm 2$, traveling distance $1/4$ on the time axis in the positive direction.
The endpoints of these two line segments will be $S_1 = \{\pm 1/2,1/4\}$.  Set $S_0 = \{(0,0)\}$, and $S_1 = \{(1/2,1/4),
(1/2,-1/4)\}$.  Also, we label the line segments constructed $\mathcal{G}_1$.  We will construct sets $\mathcal{G}_k$ and $S_k$. We will refer to the set $\mathcal{G}_k$ as the set of ``line segments of generation $k$," and we will refer to the set $S_k$ as the set ``branch points of generation $k$". We construct $\mathcal{G}_k$ and $S_k$ inductively as follows. We set $t_0=0$ and for $k \geq 1$ we set
\[t_k = \sum_{n=1}^k \frac{1}{4^n}. \]

Starting with a branch point $b$ of generation $1$, draw two line segments, each with initial vertex $b$, one having slope $4$, and the other slope $-4$, and each travelling $t$-distance
$1/4^2=1/16$.  Do this for each $b \in S_1$.  The resulting line segments define the set $\mathcal{G}_2$.  Additionally, the resulting branch points which define $S_2$  are
\[ S_2 = \{(3/4,t_2),(1/4,t_2),(-1/4,t_2),(-3/4,t_2)\}. \]

Now, we iterate this process (see the figure below).  From each branch point $b \in S_2$, we draw two line segments, one with slope $2^3$, and one with slope $-2^3$, and each with $t$-length $1/4^3$.  After doing this for all
$b \in S_2$, the resulting lines define $\mathcal{G}_3$, and the resulting branch points define $S_3$.  Proceeding inductively it is not hard to see that
\[S_n = \left\{\left(\pm \frac{2k+1}{2^n},t_n \right)\right\}_{k=0}^{2^{n-1}-1}, \]
for $ n\geq 2$. Note that, at generation $k$, the total distance travelled by the connected line segments of each previous generation is $t_n$ and $t_n\to 1/3$ as $n\to\infty$.

We set \[\Sigma_0 := \bigcup_{k=1}^\infty \bigcup_{l_\alpha \in \mathcal{G}_k} l_\alpha,\]
and we claim that
\[\displaystyle\overline{\Sigma_0} = \Sigma_0 \cup [ [-1,1]\times \{1/3\}].\]

\hspace{-5.5em}
\includegraphics{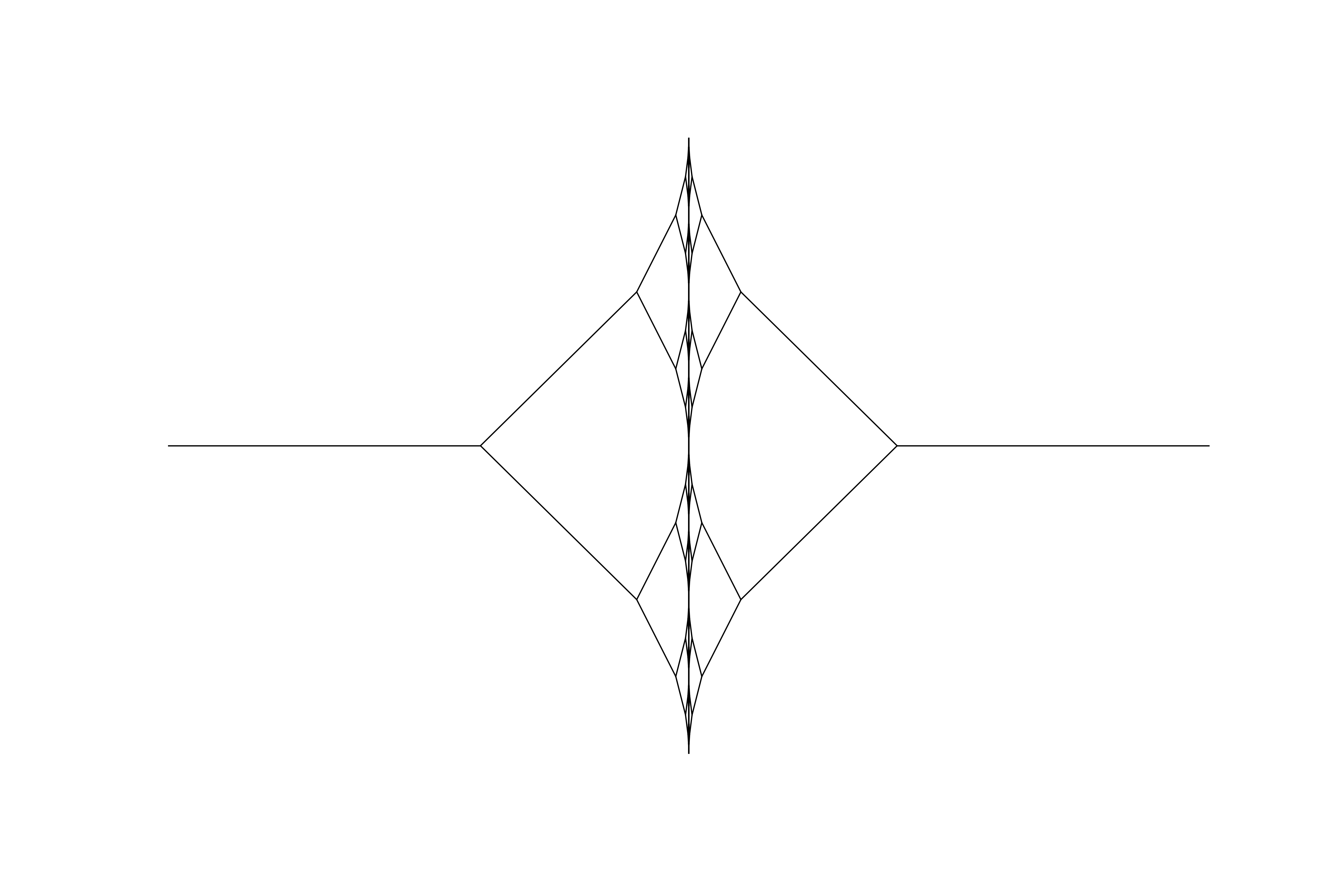}

To prove the claim, choose a point $a2^{-n}$ where $a \in \{\pm 1,\pm 2,...,\pm (2^n -1)\}$.  It is easily seen that $a2^{-n}$ is the spatial coordinate of some branch point in $\cup_{k=1}^n S_k$.  Let $a2^{-n}$ be the spatial coordinate of the branch point
$(a2^{-n},t_l) \in S_l$.  Then, choosing a ``child" branch point obtained by traveling down from $(a2^{-n},t_l)$ on the line segment with initial point $(a2^{-n},t_l)$ with slope $-2^{l+1}$ to the lower branch point of the next generation,
this ``child" branch point has spatial coordinate
$$a2^{-n} - 2^{l+1}4^{-l-1} = a2^{-n}-2^{-l-1}.$$
Next, suppose that we travel ``up" on every subsequent branch point.  Then the resulting spatial coordinates so obtained will converge to
$$a2^{-n} - 2^{-l-1} + \sum_{k=l+2}^\infty 2^{-l} = a2^{-n}-2^{-l-1}+2^{-l-1} = a2^{-n}.$$
Hence, $(a2^{-n},1/3)$ is a limit point of $\Sigma_0$.  By the arbitrary nature of the spatial coordinate $a2^{-n}$ we see that
$$\bigcup_{n=1} \bigcup_{a=1}^{2^n-1} (\pm a 2^{-n},1/3) $$
is in the closure of $\Sigma_0$.  Hence, it is easy to see that the claim follows.

Now, add the ray $(-\infty,0)$ to $\overline{\Sigma_0}$, and extend the resulting set by symmetry with respect to $t=1/3$.  We call the resulting set $\Sigma$.  The following is a computer-generated image of the set $\Sigma$ constructed (recall that the vertical axis represents $X$ and that the horizontal axis represents $t$).

We will prove that $\Sigma$ is parabolic UR and that $\Sigma$ satisfies a corkscrew condition.  First, let us focus
on showing that it is parabolic ADR.

First, we will show the ADR bounds on surface cubes centered at $(1/3) \times [-1,1]$.  Choose $(t,X) \in (1/3) \times [-1,1]$ and consider the surface cube $\Delta_R(t,x)$, $R>0$. 

Suppose first $R\geq 1$. We note that the measure of all of the line segments between $t=0$ and $t=1/3$ is
$$\sum_{n=1}^\infty 2^{n}4^{-n} =1.$$
Hence, the measure of all of the line segments between $t=1/3$ and $t=2/3$ is also $1$.  So, for $R\geq 1$,
$$\sigma(\Delta_R(X,t)) \leq 2 + 2(R-1)^2 \lesssim R^2,$$
where the factor $2(R-1)^2$ accounts for the possibility that the rays $(-\infty,0)$ and $(1/3,\infty)$ intersect the surface cube.  Now, suppose that $R\leq 1$, and that $R \approx 2^{-k}$ for some $k>0$.  We want to estimate the integers $m$ such that the backward face of
$Q_R(X,t)$ has $t$-coordinate $\approx t_m$, but
$$ \sum_{n=1}^m 4^{-n} \approx \frac{1}{3} - 4^{-k} \implies \frac{1}{3} - \frac{4^{-m}}{3} \approx  \frac{1}{3} - 4^{-k} \implies m \approx k.$$
If $\Delta_R(X,t)$ intersect segments of generation $k$, it will pick up approximately $2^{-k}$ of the total measure of the segments of that generation.  Hence
$$ \sigma(\Delta^{-}_R(X,t)) \approx 2^{-k}\sum_{n=k}^\infty 2^{-n} = 2^{-k}2^{1-k} \approx 2^{-2k} \approx R^2.$$
Because $\sigma(\Delta^{-}_R(X,t))=\sigma(\Delta^{+}_R(X,t))$ by symmetry, this establishes the upper and lower ADR bounds for cubes centered on the vertical face $ [-1,1]\times\{1/3\}$.

Now, suppose that $(X,t) \in \Sigma \cap \{0\leq t \leq 2/3\} \setminus \{t=1/3\}$.  Without loss of generality, we can assume that $0<t<1/3$.  Let $(X,t)$ lie on a line segment of generation $k$.  First,
we consider the case when $R \geq 2^{-k-1}$.  Then, $\Delta_R(X,t)$ is contained a surface cube of size $\approx R$ (but greater than $R$) centered on the vertical face $[-1,1]\times\{1/3\}$.
From this, we easily see that the upper ADR bound holds in this case.  The lower ADR bound holds trivially in both the forward and backward directions.
So, assume that $R < 2^{-k-1}$, so that the surface cube $\Delta_R(X,t)$ does not intersect the vertical face.  In fact, we can see that
the surface cube only intersects a uniformly bounded number of lines of generation $k-1$, $k$ and $k+1$.  So, it is easy to see that
$$\sigma(\Delta_R(X,t)) \approx R^2,\ \sigma(\Delta^+_R(X,t)) \approx R^2,\ \sigma(\Delta^-_R(X,t)) \approx R^2.$$
Now, the last case to consider is when $(X,t)$ lies in one of the rays $(0,\infty) \times \{0\}$, $(1/3,\infty) \times \{0\}$.  This case is easy to see.  We leave the details to the reader.

Now, we show that the set is parabolic UR.  First, we show Carleson measure estimates hold on the points of the vertical face $[-1,1]\times \{1/3\} $.   Choose a point $(X,t) \in [-1,1]\times\{1/3\}$, and $R>0$.  Choose $l$ to be the largest
 integer such that $R \leq 2^{-l}$.  We split 
\begin{align*}
&\int_0^R \int_{\Delta_R(X,t)} \beta^2(Y,s,r) \frac{\d\sigma(Y,s)\d r}{r}  \\
&\leq \sum_{k=l}^{\infty} \int_0^{2^{-k}} \int_{\Delta_R(X,t) \cap \mathcal{G}_k} \beta^2(Y,s,r) \frac{\d\sigma(Y,s)\d r}{r}\\
&+ \sum_{k=l}^{\infty} \int_{2^{-k}}^{2^{-l}} \int_{\Delta_R(X,t) \cap \mathcal{G}_k} \beta^2(Y,s,r)\frac{\d\sigma(Y,s)\d r}{r} \\
&=: I + II.
\end{align*}
Here, $\mathcal{G}_k$ refers to both the ``original" lines of generation $k$, and their reflections about $t=1/3$.  First, let us deal with term $I$.  We note that for $(s,Y) \in \mathcal{G}_k$, is it easy to see that
\begin{align*}
\beta^2(Y,s,r) \lesssim \frac{1}{r^4} \int_0^{r^2} |2^kt^2|^2 \d t = \frac{2^{2k}}{r^4} \int_0^{r^2} t^2 \d t \approx 2^{2k}r^2.
\end{align*}
Hence, as $\Delta_R(X,t)$ only intersects $\mathcal{G}_k$ for $k \geq l$,
\begin{align*}
I &\lesssim \sum_{k=l}^\infty \int_0^{2^{-k}} \int_{\Delta_R(X,t) \cap \mathcal{G}_k} 2^{2k}r^2 \frac{\d\sigma(Y,s)\d r}{r}\\
& \lesssim \sum_{k=l}^\infty \int_0^{2^{-k}} 2^{-l}2^{-k}2^{2k}r^2 \frac{\d\sigma(Y,s)\d r}{r} \approx 2^{-2l} \approx R^2.
\end{align*}
Now, we need to estimate term $I$.  For this, we simply note that the $\beta$ numbers are all uniformly bounded by a constant which depends only on ADR.  Hence
\begin{align*}
II &\leq \sum_{k=1}^\infty \int_{2^{-k}}^{2^{-l}} \int_{\Delta_R(X,t)\cap \mathcal{G}_k} \beta^2(Y,s,r) \frac{\d\sigma(Y,s)\d r}{r}\\
& \lesssim \sum_{k=l}^\infty \int_{2^{-k}}^{2^{-l}} \int_{\Delta_R(X,t)\cap \mathcal{G}_k} \frac{\d\sigma(Y,s)\d r}{r}\\
&\leq \sum_{k=l} 2^{-l}2^{-k} k \lesssim 2^{-2l} \approx R^2.
\end{align*}
So, we have appropriate Carleson measure bounds for surface cubes centered on the vertical face.  Now, we need to prove the same estimates for points in $\Sigma \cap \{0\leq t \leq 2/3\} \setminus ([-1,1]\times\{1/3\})$.
Again, just as in proving the ADR bounds, we can reduce this to proving the bound for points $(X,t) \in \Sigma$ with $0<t<1/3$.  Choose such a point $(X,t)$, and suppose that it lies on a line segment of generation $k$.
If we choose a scale $R \geq 2^{-k+1}$, then, again, there is a surface cube $\Delta_{CR}$ centered on the vertical face, containing $\Delta_R(X,t)$.  Therefore
$$\nu[\Delta_R(X,t)\times (0,R)] \leq \nu[\Delta_{CR} \times(0,CR)] \lesssim R^2.$$
On the other hand, if $r< 2^{-k}$, then the appropriate Carleson measure bound follows immediately from estimating a term like $I$ above.  Finally, all that is left is to prove the Carleson measure estimate for surface cubes which are not centered at point with
$t$-value between $0$ and $2/3$.  This case is easy, and we leave the details to the reader.

Now, we need to prove that $\Sigma$ satisfies a two-sided corkscrew condition.  Choose a point $(X,t)$ on the vertical face, and a scale $R$.  If $R \geq 1$, then $\Delta_R(X,t)$ will contain an portion of the ray $(0,\infty) \times \{0\}$ of $t$-length $\approx
R$.  It is easy to produce corkscrews in this case by considering points on the portion of the ray contained in the surface cube.  Now, suppose that $R \leq 1$.  then $R \approx 2^{-k}$ for some $k\geq 1$.  It is easy to see that $\Delta_R(X,t)$ will completely contain a line segment in $\mathcal{G}_l$ for some $l \approx k$.  At the midpoint of this line segment, it is also easy to see that we can produce corkscrews at scale $2^{-l}$, each of parabolic size $\approx 2^{l}$.  Now, consider the case where $(X,t)$ lies on a line segment in $\mathcal{G}_k$.  For $R<2^{-k+1}$, it is trivial to show that we can produce corkscrews of size $\approx R$ which lie within $Q_R(X,t)$.  Now, suppose that $R \geq 2^{-k+1}$.  Then there is a surface cube $\Delta_{R/2}$ centered on the vertical face contained in $\Delta_{R}(X,t)$.  By the work above, we can produce corkscrews relative to $\Delta_{R/2}$ of parabolic size $\approx R/2 \approx R$, which are clearly corkscrews relative to
$\Delta_R(X,t)$.  Finally, we need to consider when $(X,t)$ lies outside of $\{(X,t):0\leq t \leq 2/3\}$.  But this case is trivial.

Note that as $\Sigma$ contains a vertical face, it is impossible for $\Sigma$ to satisfy a synchronized two-cube condition.  However, $\Sigma$ is parabolic UR, so in fact it satisfies a weak synchronized two cube condition.

%\bibliography{Biblo}
%\bibliographystyle{alpha}

%\bibliographystyle{alpha}
%\bibliography{Bibliography}

\end{document}